\newtheorem{thm}{Theorem}[section]
\newtheorem{cor}[thm]{Corollary}
\newtheorem{conj}[thm]{Conjecture}
\newtheorem{lem}[thm]{Lemma}
\newtheorem{prop}[thm]{Proposition}
\theoremstyle{remark}
\newtheorem*{rem}{Remark}
\newtheorem*{example}{Example}
\newcounter{remarkscounter}
\newenvironment{remarks}
{\medskip\noindent{\it
Remarks.}\begin{list}{{\rm(\arabic{remarkscounter})}
}{\usecounter{remarkscounter}

\setlength{\labelsep}{\fill} \setlength{\leftmargin}{0pt}
\setlength{\itemindent}{\fill}
\setlength{\labelwidth}{\fill}\setlength{\topsep}{0pt}
\setlength{\listparindent}{0pt}}} {\end{list}}
\numberwithin{equation}{section}
\newcommand{\A}{\mathbb{A}}
\newcommand{\GL}{\mathrm{GL}}
\newcommand{\SL}{\mathrm{SL}}
\newcommand{\ZZ}{\mathbb{Z}}
\newcommand{\FF}{\mathbb{F}}
\newcommand{\QQ}{\mathbb{Q}}
\newcommand{\lto}{\longrightarrow}
\newcommand{\hooklto}{\lhook\joinrel\longrightarrow}
\newcommand{\OO}{\mathcal{O}}
\newcommand{\CC}{\mathbb{C}}
\newcommand{\RR}{\mathbb{R}}
\newcommand{\GG}{\mathbb{G}}
\newcommand{\quash}[1]{}
\theoremstyle{definition}
\newenvironment{psmatrix}
  {\left(\begin{smallmatrix}}
  {\end{smallmatrix}\right)}
\renewcommand{\bar}{\overline}
\numberwithin{equation}{subsection}
\newcommand{\one}{\mathbbm{1}}
\title[Schubert Eisenstein series and Poisson summation]{Schubert Eisenstein series and Poisson summation for Schubert varieties}
\author{YoungJu Choie}
\address{Department of Mathematics\\
Pohang University of Science and Technology  \\
Pohang, South Korea 37673}
\email{yjc@postech.ac.kr}
\author{Jayce R. Getz}
\address{Department of Mathematics\\
Duke University\\
Durham, NC 27708}
\email{jgetz@math.duke.edu}
\subjclass[2010]{Primary 11F70; Secondary 11F55, 11F85}
\keywords{Schubert Eisenstein series, Poisson summation conjecture, Schubert varieties}
\thanks{The first author was partially supported by
 2022R1A2B5B0100187113, NRF 2017R1A2B2001807 and BSRI-NRF 2021R1A6A1A10042944. The second author is partially supported by NSF grant DMS 1901883, and received support from Y.~Choie's NRF 2017R1A2B2001807 grant.  
}
\begin{document}

\begin{abstract}
Schubert Eisenstein series are defined by restricting the summation in a degenerate Eisenstein series to a particular Schubert variety.  In the case of $\GL_3$ over $\QQ$ Bump and the first author proved that these Schubert Eisenstein series have meromorphic continuations in all parameters and conjectured the same is true in general.  We revisit their conjecture and relate it to the program of Braverman, Kazhdan, Lafforgue, Ng\^o, and Sakellaridis aimed at establishing generalizations of the Poisson summation formula.  We prove the Poisson summation formula for certain schemes closely related to Schubert varieties and use it to refine and establish the conjecture of Bump and the first author  in many cases.
\end{abstract}

\maketitle

\tableofcontents

\section{Introduction}

In this paper we
prove the Poisson summation conjecture of Braverman-Kazhdan, Lafforgue, Ng\^o, and Sakellaridis for a particular family of varieties related to Schubert varieties (see Theorem \ref{thm:PS:intro}).  
We were motivated to prove this case of the conjectures due to the fact that it implies
  functional equations of Schubert Eisenstein series conjectured to exist  by Bump and the first author in \cite{Bump:Choie}, see Theorem \ref{thm:ES:intro}. We begin by recalling the definition of Schubert Eisenstein series and then move to a discussion of the Poisson summation formula.

\subsection{Generalized Schubert Eisenstein series} Let $F$ be a global field with ring of adeles $\A_F,$ and let $G$ be a (connected) split semisimple group over $F.$  We let
$$
T \leq B \leq P \leq G
$$
be a maximal split torus, a Borel subgroup, and a parabolic subgroup of $G.$  Moreover we let
$$
T \leq M \leq P
$$
be the Levi subgroup of $P$ containing $T$.  
Let $N_G(T)$ be the normalizer of $T$ in $G$ and let $
W(G,T):=N_G(T)(F)/T(F)
$
be the  Weyl group of $T$ in $G.$  Finally we let 
$$
\omega_P :M^{\mathrm{ab}} \lto  \GG_m^{k+1}
$$
be the isomorphism of \eqref{omegaP}.   Here $M^{\mathrm{ab}}=M/M^{\mathrm{der}}$ is the abelianization of $M$, where $Q^{\mathrm{der}}$ is the derived group of an algebraic group $Q$.   

Let $A_{\GG_m} <F_\infty^\times$ be the usual subgroup (see \eqref{AGm}) and let
 $\chi:  
(A_{\GG_m} F^\times \backslash \A_F^\times)^{k+1} \to \CC^\times$ be a character.
For $s=(s_i) \in \CC^{k+1}$ we define
\begin{eqnarray}\label{cha}
    \chi_s(a_0,\dots,a_k):=\chi(a_0,\dots,a_k)\prod_{i=0}^k|a_i|^{s_i}.
\end{eqnarray}

We form the induced representation
$$
I_P(\chi_s):=\mathrm{Ind}_{P}^G(\chi_s \circ \omega_P),
$$
normalized so that it is unitary when $s \in (i\RR)^{k+1}.$
 Using the  Bruhat decomposition of $G$
 one has a decomposition of the generalized flag variety
 $$
 P \backslash G=\coprod_{w \in W(M,T) \backslash W(G,T)}P \backslash PwB.
 $$
 Here we have used the same symbol $w$ for a class in $W(M,T) \backslash W(G,T)$ and for a representative of that class.  Let $X_w$ be the (Zariski) closure of the Schubert cell $P \backslash PwB$ in $
 P \backslash G.$  It is a Schubert variety.  
 
The Schubert Eisenstein series attached to a section $\Phi^{\chi_s} \in I_P(\chi_s),$ $g \in G(\A_F)$ and $w \in W(G,T)$ is defined as
\begin{align} \label{SE}
SE_w(g,\Phi^{\chi_s}):=\sum_{\gamma\in X_w(F)} \Phi^{\chi_s}(\gamma g).
\end{align}
 It converges absolutely provided that $\mathrm{Re}(s)$ lies in a suitable cone.  The function $SE_w(g,\Phi^{\chi_s})$ is no longer left $G(F)$-invariant.  However, it is invariant under the stabilizer of the Schubert cell $X_w$ under the natural action of $G$ on $P \backslash G.$  Since this stabilizer contains $B,$ it is a parabolic subgroup, and it is often larger than $B.$

In \cite{Bump:Choie} Bump and the first author posed the following questions:

\begin{enumerate}
    \item[(a)] Do Schubert Eisenstein series admit a meromorphic continuation to all $s$?
    \item[(b)]
    Do a subset of the functional equations for the full Eisenstein series continue to hold for Schubert Eisenstein series?
    \item[(c)] Is it possible to find a linear combination of Schubert Eisenstein series which is entire?
\end{enumerate}

Let us pause and point out how striking the functions \eqref{SE} and these questions from \cite{Bump:Choie} are. Schubert Eisenstein series are a novel modification of degenerate Eisenstein series that enjoys many of the properties of the full degenerate Eisenstein series. Prior to the work of Bump and Choie, this modification seems to have been missed, despite the fact that degenerate Eisenstein series have been intensely studied for over a century.  

We further generalize the definition of Schubert Eisenstein series in higher rank in a manner that can be specialized to the definition in the $G=\SL_3$ case treated in \cite{Bump:Choie}. 
In this much greater generality we answer questions (a), (b) and (c) affirmatively
when we regard $SE_w(g,\Phi^{\chi_s})$ as a function of $s_0$ and assume the $s_i$ with $i \neq 0$ are fixed with large real part.  We refer to Theorem \ref{thm:ES:intro} and the subsequent remarks for details.   

 Before proceeding, let
 us examine the setting considered in \cite{Bump:Choie} and isolate 
 the change in viewpoint  that is the germ of our work.  Technically speaking in loc.~cit.~the authors work with $\GL_3$ whereas we work with $\mathrm{SL}_3$, but the translation between the two is straightforward.  Let $G=\mathrm{SL}_3$ and let $B<\SL_3$ be the Borel subgroup of upper triangular matrices.  From the point of view of this paper the most interesting Schubert variety in $B \backslash G$ is $B \backslash \overline{B\sigma_1\sigma_2B}$ where
\begin{align} \label{sig1sig2}
\sigma_1=\begin{psmatrix} & 1&   \\ -1 & & \\ &   & & 1\end{psmatrix}, \quad 
\sigma_2= \begin{psmatrix} 1 &  &   \\   & & -1 \\   &1& \end{psmatrix}.
\end{align}
In this case using standard facts on the Bruhat ordering \cite[\S 8.5.4-8.5.5]{Springer:LAG} one has
\begin{align} \label{Bruhat:decomp} 
\overline{B\sigma_1\sigma_2B}=B\sigma_1\sigma_2B \amalg B\sigma_1B \amalg B\sigma_2B \amalg  B.
\end{align}
Let $R$ be an $F$-algebra.  Then 
\begin{align} \label{closure}
    \overline{B\sigma_1\sigma_2B}(R)=\left\{\begin{psmatrix} a & b & c\\ d & e & f\\ & g & h \end{psmatrix}  \in \mathrm{SL}_3(R)\right\}.
\end{align}
Indeed, the set on the right is the $R$-points of an irreducible closed subscheme of $\mathrm{SL}_3$ that contains the irreducible closed subscheme $\overline{B\sigma_1\sigma_2B},$ hence we deduce equality by considering dimensions.  
If we let
\begin{align*}
    P_{2,1}(R):=\left\{\begin{psmatrix} a & b & c\\ d & e & f\\ &  & h \end{psmatrix}  \in \mathrm{SL}_3(R)\right\}, \quad
    P_{1,2}(R):=\left\{\begin{psmatrix} a & b &c \\  & e & f\\ & g & h \end{psmatrix}  \in \mathrm{SL}_3(R)\right\}
\end{align*}
then
\begin{align} \label{new:realiz}
\overline{B\sigma_1\sigma_2B}=P_{2,1}\sigma_1\sigma_2P_{1,2}.
\end{align}
Indeed, it is easy to see that each Bruhat cell on the right of \eqref{Bruhat:decomp} is contained in $P_{2,1}\sigma_1\sigma_2P_{1,2},$ and it is clear on the other hand from \eqref{closure} that $P_{2,1}\sigma_1\sigma_2P_{1,2} \subseteq \overline{B\sigma_1\sigma_2B}.$
The natural map
$$
P_{2,1} \times P_{1,2} \lto \overline{B\sigma_1\sigma_2B}
$$
is a lift of the Bott-Samelson resolution of the image $X_{\sigma_1\sigma_2}$ of $\overline{B\sigma_1\sigma_2B}$ in $B \backslash \SL_3.$  This was the point of departure for the arguments in \cite{Bump:Choie}.    
In \cite{Bump:Choie} the first author and Bump suggest employing these methods to prove the meromorphy of Schubert Eisenstein series in higher rank, but we do not know how to execute this suggestion.

Instead of 
 pursuing Schubert varieties and the Bott-Samelson resolution to study higher rank analogues of the left hand side of \eqref{new:realiz}, we  generalize and study the right hand side directly.
Let us step back to consider the situation for a general split semisimple group $G.$ Consider the closure $\overline{PwB}$ in $G.$  It is the union of Bruhat cells $\cup_{w'\leq w} Pw'B,$ where $\leq$ denotes the Bruhat order \cite[\S 8.5.4-8.5.5]{Springer:LAG}.
 The group $G$ acts on itself on the left, and we let $Q$ be the stabilizer of the closed subscheme $\overline{PwB} \subset G$ (i.e. the algebraic subgroup of $G$ sending $\overline{PwB}$ to itself).  This is a closed algebraic subgroup of $G$ \cite[Corollary 1.81]{Milne:AGbook}.  The group $Q$ contains $P,$ and hence is a parabolic subgroup. 

To prove analytic properties of series indexed by $P(F) \backslash \overline{PwB}(F),$ we could consider any
  parabolic subgroup $P'$ with $P \leq P' \leq Q$ 
  and study series indexed by
 $$
 P'wB.
 $$
Since $\overline{PwB}(F)$ is a finite union of $Pw'B(F)$ as recalled above it is also a finite union of sets of the form $P'w'B(F).$   Thus if series indexed by sets of the form $P'wB(F)$ already admit meromorphic continuations the same is true of the Schubert Eisenstein series.  This is indeed the case, as we will show in Theorem \ref{thm:ES:intro} below.

In fact, our main result also applies to subschemes of $G$ of the form
  \begin{align} \label{P'gammaH}
  P' \gamma H
  \end{align}
 where $P \leq P' \leq G$ are a pair of parabolic subgroups, $\gamma \in G(F)$ and $H$ is an arbitrary algebraic subgroup of $G.$  
 Not all Schubert cells are the image in $P \backslash G$ of a set of the form \eqref{P'gammaH}.  Indeed, Schubert cells are often nonsmooth, whereas the image of any set of the form $P'\gamma H$ in $P \backslash G$ is a smooth subscheme (this follows from Lemma \ref{lem:CP:smooth}).

 In order to treat series indexed by sets of the form $P^{\mathrm{der}}(F) \backslash P\gamma H(F)$ and $P^{\mathrm{der}}(F) \backslash \bar{PwB}(F)$ simultaneously we work with an arbitrary reduced subscheme $Y \subseteq G$ that is stable under left multiplication by $P'.$ 
 Let $X_P^\circ:=P^{\mathrm{der}} \backslash G$ be the Braverman-Kazhdan space associated to $P$ and $G.$   Let
\begin{align} \label{Cgamma}
 Y_{P}=\mathrm{Im}(Y \lto X_{P}^\circ).
\end{align} 
To be more precise, the set theoretic image of $Y \to X_P^{\circ}$ is locally closed by Lemma \ref{lem:locally:closed}.  This set is the underlying topological space of a subscheme $Y_P$ of $X_P^{\circ}.$
The subscheme $Y_P \subseteq X_P^\circ$ is quasi-affine. We emphasize that the schemes $Y_P$ generalize both schemes of the form $P^{\mathrm{der}} \backslash P \gamma H$ and Schubert varieties.  In particular $Y_P$ need not be smooth. For some examples when $G=\mathrm{SL}_n$ we refer to \S \ref{sec:example}.

Let $X_P$ be the affine closure of $X_P^\circ$ and let
\begin{align} \label{Xgammacirc}
Y_{P,P'}\subseteq X_{P}
\end{align}
be the partial closure of $Y_{P}$ in $X_P$ constructed in \eqref{partial:closure} below.
We observe that there is a natural action of $M^{\mathrm{ab}}$ on $Y_{P,P'}$ that preserves $Y_{P}$ (see \eqref{geo:action}). 

Without essential loss of generality we assume $G$ is simple and simply connected.  We additionally make the following technical assumption:
\begin{align} \label{P'assump}
    P \textrm{ is maximal in }P'.
\end{align} 
We will explain the motivation for this assumption after stating Theorem \ref{thm:ES:intro} below.
Under \eqref{P'assump} there is a unique parabolic subgroup $P^{*} <P'$ with Levi subgroup $M$ that is not equal to $P.$ 
Thus one might think of $P^{*}$ as the opposite parabolic of $P$ with respect to $P'.$

In \S \ref{sec:Ss} we define Schwartz spaces 
$$
\mathcal{S}(Y_{Q,P'}(\A_F))
$$
for $Q \in \{P,P^*\}$
together with a Fourier transform
\begin{align}
    \mathcal{F}_{P|P^*}:\mathcal{S}(Y_{P,P'}(\A_F)) \tilde{\lto} \mathcal{S}(Y_{P^*,P'}(\A_F)).
\end{align} 
The Schwartz space $\mathcal{S}(Y_{Q,P'}(\A_F))$ is contained in the set of restrictions to $Y_{P}(\A_F)$ of functions in $C^\infty(X_P^\circ(\A_F))$.  
Let $H \leq G$ be a subgroup, and consider the action of $H$ on $G$ by right multiplication.  Assume that $Y$ is stable under the action of $H.$
  Then the Schwartz spaces $\mathcal{S}(Y_{P,P'}(\A_F))$ and $\mathcal{S}(Y_{P^*,P'}(\A_F))$ are preserved under the action of $M^{\mathrm{ab}}(\A_F) \times H(\A_F)$ of \eqref{Sch:act} and the Fourier transform satisfies a twisted equivariance property by Lemma \ref{lem:equiv}.

Let $I_{P^*}^*(\chi_s):=\mathrm{Ind}_{P^*}^{G}(\chi_s \circ \omega_P).$  
The $*$ indicates that we are inducing  $\chi_s \circ \omega_P,$ not $\chi_s \circ \omega_{P^*}.$
The group $M^{\mathrm{ab}} $ acts on $Y_{P}$ and $Y_{P^*}$ on the left, and hence  we obtain Mellin transforms
\begin{align} 
\label{Mellin} \begin{split}
\mathcal{S}(Y_{P,P'}(\A_F)) &\lto I_P(\chi_s)|_{Y_{P}(\A_F)} \\
f &\longmapsto f_{\chi_s}(\cdot):=f_{\chi_s,P}(\cdot):=\int_{M^{\mathrm{ab}}(\mathrm{A}_F)}\delta_{P}^{1/2}(m)\chi_s(\omega_P(m))f(m^{-1}\cdot)dm, \\\mathcal{S}(Y_{P^*,P'}(\A_F)) &\lto I_{P^*}^*(\chi_s)|_{Y_{P^*}(\A_F)} \\
f &\longmapsto f_{\chi_s}^*(\cdot):=f_{\chi_s,P^*}^*(\cdot):=\int_{M^{\mathrm{ab}}(\mathbb{A}_F)}\delta_{P^*}^{1/2}(m)\chi_s(\omega_P(m))f(m^{-1}\cdot)dm.
\end{split}
\end{align}  
Here $\delta_Q$ is the modular quasi-character of an algebraic group $Q$.  The fact that the Mellin transform $f_{\chi_s}$ (resp.~$f_{\chi_s}^*$) is absolutely convergent for $\mathrm{Re}(s_0)$   large (resp. ~$\mathrm{Re}(s_0)$ small) is built into the definition of the Schwartz space.  

\begin{rem}
We will write $\Phi^{\chi_s}$ for a section of $I_P(\chi_s)$ that is not necessarily a Mellin transform of an element $f \in \mathcal{S}(Y_{P,P'}(\A_F)).$  We take the analogous convention in the local setting and when $I_P(\chi_s)$ is replaced by $I_{P^*}^*(\chi_s).$  
\end{rem}

For $f_1 \in \mathcal{S}(Y_{P,P'}(\A_F)),$ $f_2 \in \mathcal{S}(Y_{P^*,P'}(\A_F))$
we define \textbf{generalized Schubert Eisenstein series}
\begin{align} 
\label{SEis} \begin{split}
E_{Y_P}(f_{1\chi_s}):&=\sum_{y\in M^{\mathrm{ab}}(F) \backslash Y_{P}(F)}f_{1\chi_s}(y),\\
E_{Y_{P^*}}^*(f_{2\chi_s}^*):&=\sum_{y^* \in M^{\mathrm{ab}}(F) \backslash Y_{P^*}(F)}f_{2\chi_s}^*(y^*).\end{split}
\end{align}
These sums converge absolutely for $\mathrm{Re}(s_0)$ sufficiently large (resp.~small) provided that $\mathrm{Re}(s_1),\dots,\mathrm{Re}(s_k)$ are sufficiently large.  Here $(s_0,\dots,s_k)$ are used to define $\chi_s$ as in \eqref{cha}.  To help motivate this definition we point out that Lemma \ref{lem:surj} implies that
\begin{align*}
P(F) \backslash Y(F) &\lto M^{\mathrm{ab}}(F) \backslash Y_{P}(F)
\end{align*}
is a bijection.

\begin{thm} \label{thm:ES:intro}
Let $f \in \mathcal{S}(Y_{P,P'}(\A_F))$.  Assume that 
\begin{enumerate}
    \item $F$ is a function field, or
    \item $F$ is a number field and Conjecture \ref{conj:poles:intro} is valid.
\end{enumerate}
Fix $s_1,\dots,s_k$ such that $\mathrm{Re}(s_i)$ is sufficiently large for $1 \leq i \leq k.$
Then $E_{Y_P}(f_{\chi_s})$ and $E_{Y_{P^*}}(\mathcal{F}_{P|P^*}(f)_{\chi_s}^*)$ are meromorphic in $s_0$.  Moreover one has 
$$
E_{Y_P}(f_{\chi_s})=E^*_{Y_{P^*}}(\mathcal{F}_{P|P^*}(f)_{\chi_s}^*).
$$
\end{thm}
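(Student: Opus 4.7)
The plan is the classical Tate-style argument, using the Poisson summation formula for $Y_{P,P'}$ (Theorem \ref{thm:PS:intro}) in place of classical Poisson summation on affine space. I would first reinterpret $E_{Y_P}(f_{\chi_s})$ as a Mellin-type integral of a theta series. Unfolding the Mellin transform against the sum defining $E_{Y_P}$, and using the bijection $P(F) \backslash Y(F) \to M^{\mathrm{ab}}(F) \backslash Y_P(F)$ of Lemma \ref{lem:surj}, for $\mathrm{Re}(s_0)$ sufficiently large I expect
\[
E_{Y_P}(f_{\chi_s}) = \int_{M^{\mathrm{ab}}(F) \backslash M^{\mathrm{ab}}(\A_F)} \delta_P^{1/2}(m)\,\chi_s(\omega_P(m))\,\Theta_f(m)\, dm,
\]
where $\Theta_f(m) := \sum_{y \in Y_P(F)} f(m^{-1}y)$. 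An analogous formula holds on the dual side, with $\delta_P$ replaced by $\delta_{P^*}$ and $\Theta_f$ replaced by $\Theta^*_{\mathcal{F}_{P|P^*}(f)}(m) := \sum_{y^* \in Y_{P^*}(F)} \mathcal{F}_{P|P^*}(f)(m^{-1}y^*)$.

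Next I would split the $m$-integral according to the single coordinate $|\omega_P(m)_0|$ that is conjugate to $s_0$; the remaining coordinates $|\omega_P(m)_i|$ for $i \geq 1$ cause no trouble because $\mathrm{Re}(s_i)$ is fixed and large, which is precisely why assumption \eqref{P'assump} (that $P$ is maximal in $P'$) enters. On the piece $|\omega_P(m)_0| \geq 1$ the integral is absolutely convergent for all $s_0 \in \CC$ and defines an entire function. On the complementary piece I would invoke Theorem \ref{thm:PS:intro} to rewrite $\Theta_f(m)$ in terms of the dual theta series $\Theta^*_{\mathcal{F}_{P|P^*}(f)}(m^{-1})$ up to the modular-character ratio dictated by the Fourier transform, and then substitute $m \mapsto m^{-1}$. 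The twisted equivariance of $\mathcal{F}_{P|P^*}$ (Lemma \ref{lem:equiv}) ensures that the character $\chi_s$ carries through correctly, and the non-compact integral on the $P$-side is transported to the compact-side integral on the $P^*$-side. This simultaneously yields the meromorphic continuation in $s_0$ and, by the symmetry between the two sides, the functional equation $E_{Y_P}(f_{\chi_s}) = E^*_{Y_{P^*}}(\mathcal{F}_{P|P^*}(f)^*_{\chi_s})$.

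The main obstacle is the analysis of the boundary terms coming from the Poisson summation step: Schwartz functions in $\mathcal{S}(Y_{P,P'}(\A_F))$ are not required to vanish along the closed stratum added in passing from $X_P^\circ$ to $X_P$, and these boundary values are the source of poles in $s_0$. In the function field case they are finite-dimensional and tractable, but in the number field case their control is exactly the content of Conjecture \ref{conj:poles:intro}, which is why the number field half of the theorem is conditional on it. Once this boundary analysis is in place, the remaining unfolding, refolding, and change of variables should proceed routinely.
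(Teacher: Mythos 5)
Your proposal is correct and follows essentially the same route as the paper: the paper writes $E_{Y_P}(f_{\chi_s})$ as exactly this Mellin integral of the theta series over $[M^{\mathrm{ab}}]$, splits the integral along the $\beta_0$-coordinate into $[\GG_m]^+$, $[\GG_m]^1$, and $[\GG_m]^-$ pieces, applies Theorem \ref{thm:PS:intro} (in its precise form, Theorem \ref{thm:PS}) to the small piece, and handles the resulting residue/boundary terms via Conjecture \ref{conj:poles:intro} (through Lemmas \ref{lem:mero} and \ref{lem:lin:func}), first for $K_M$-finite $f$ and then in general. The only points you leave implicit are the reduction to $K_M$-finite test functions and the technical lemma controlling the sum over $y$, $\chi'$, $s_0$ of residues after integrating over $[\GG_m]^-\times A^{\beta_0}(\A_F)$, but your outline matches the paper's argument.
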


\noindent Conjecture \ref{conj:poles:intro} below states that certain normalized 
degenerate Eisenstein series have only finitely many poles.  It is known if $G=\mathrm{SL}_n$ and in several other cases (see the remark after the statement of the conjectures below).

Let $w \in G(F)$ and let
 $P'$ be the stabilizer of $\overline{PwB}$ under the left action of $G$.
 Let $M'$ be the Levi subgroup of $P'$ containg $M.$
 Then one expects a family of functional equations for $E_{P^{\mathrm{der}} \backslash \overline{PwB}}(f_{\chi_s})$ analogous to the family of functional equations for the Eisenstein series formed from the induction of a quasi-character of $P \cap M'(\A_F)$ to $M'(\A_F)$.
 In the special case where $P<P'$ is maximal, only one functional equation is expected, and Theorem \ref{thm:ES:intro} provides this, answering question (b).  It also provides the meromorphic continuation of the Eisenstein series in $s_0,$ answering the question (a) as a function of $s_0$.  The proof  also shows that we can choose a linear combination of Schubert Eisenstein series that is entire in $s_0$ for fixed $s_1,\dots,s_k$ all with large real part, answering question (c) as a function of $s_0$.  With some effort, our methods should generalize to treat the case where $P$ is not necessarily maximal in $P'.$

Bump and the first author were able to obtain the meromorphic continuation of Schubert Eisenstein series in the $\mathrm{SL}_3$ case in all complex parameters and more than one functional equation.  It would be interesting to see if to what extent this continues to hold in general.

\begin{remarks}
\item We have already seen that one can choose the data so that $Y_{P,P'}$ is a lift to $X_P$  of a partial closure of a Schubert cell.  Schubert varieties admit analogues in any Kac-Moody group, and there is the intriguing possibility that Theorem \ref{thm:ES:intro} can be generalized to this setting.  There have been tantalizing hints of interactions between classical automorphic forms and Kac-Moody groups, a nice summary is contained in the introduction to \cite{Garland:Miller:Patnaik}.  The fact that such groups are infinite dimensional is a source of difficulty.  The key observation here is that the Schubert cells in the flag varieties of Kac-Moody groups are finite-dimensional. 

\item It is an important problem to investigate whether generalized Schubert Eisenstein series can be used to produce integral representations of automorphic $L$-functions.  See \S \ref{ssec:int:reps} below for more details.  For example, one could try to
generalize the famous doubling method of Piatetski-Shapiro and Rallis \cite{GPSR:LNM}.

\item For some information about the possible poles of $E_{Y_P}(f_{\chi_s}),$ see Corollary \ref{cor:poles}.
\end{remarks}

%%%%%%%%%%%%%%%%%%%%%%%%%%%%%%%%%%%%%%%%%%%%%%%%
\subsection{The Poisson summation conjecture}

To prove Theorem \ref{thm:ES:intro} we prove new cases of a seminal conjecture due to Braverman and Kazhdan \cite{BK-lifting}.  The conjecture was later investigated by Lafforgue \cite{LafforgueJJM} and refined by Ng\^o \cite{NgoSums, Ngo:Hankel}.  It was partially set in the framework of spherical varieties by Sakellaridis \cite{SakellaridisSph}.
Here a spherical variety $X$ for a reductive group $G$ over $F$  is a normal integral separated $G$-scheme $X$ of finite type over $F$ such that  $X_{\bar{F}}$ admits an open orbit under a Borel subgroup of $G_{\bar{F}}.$

The conjecture can be roughly formulated as follows.
Assume that $X$ is an affine spherical variety with smooth locus $X^{\mathrm{sm}}$.  Then there should be a Schwartz space $ \mathcal{S}( X(\A_F)) < C^{\infty}(X^{\mathrm{sm}}(\A_F))$ and a Fourier transform
$$
\mathcal{F}_X:\mathcal{S}( X(\A_F)) \lto \mathcal{S}(X (\A_F))
$$
satisfying a certain twisted-equivariance property under $G(\A_F)$
such that for $f \in \mathcal{S}(X(\A_F))$ satisfying certain local conditions
$$
\sum_{x\in X^{\mathrm{sm}}(F)}f(x)=\sum_{x \in X^{\mathrm{sm}}(F)}\mathcal{F}_X(f)(x).
$$
We refer to this (somewhat vaguely stated) conjecture as the \textbf{Poisson summation conjecture}.  The original motivation, explored in \cite{BK-lifting,NgoSums,Ngo:Hankel}, is that  it implies the meromorphic continuation and functional equation of Langlands $L$-functions in great generality.  By the converse theorem  \cite{Cogdell:PS:ConverseII} this would imply Langlands functoriality in great generality.  

\begin{rem}We highlight the possibly confusing convention that functions in $\mathcal{S}(X(\A_F))$ need not be defined on all of $X(\A_F),$ only on $X^{\mathrm{sm}}(\A_F).$  One expects that for each place $v$ elements of $\mathcal{S}(X(F_v))$ are functions in $C^\infty(X^{\mathrm{sm}}(F_v))$ that are rapidly decreasing away from the singular locus $(X-X^{\mathrm{sm}})(F_v)$ and have particular asymptotic behavior as one approaches $(X-X^{\mathrm{sm}})(F_v).$  This was conjectured in \cite[\S 5]{Ngo:Hankel} in a special case.
\end{rem}

The only case of the Poisson summation conjecture that is completely understood is the case where $X$ is a vector space.  For the affine closures $X_P$  of the Braverman-Kazhdan space  $X_P^\circ$ much of the conjecture is known \cite{Getz:Liu:BK,Getz:Hsu,Getz:Hsu:Leslie,Jiang:Luo:Zhang,Shahidi:FT}.  There are some additional examples in \cite{Getz:Summ,Getz:Liu:Triple,Getz:Hsu,Gu}.  However the cases that are known are still very limited.  

In order to prove Theorem \ref{thm:ES:intro} we prove the Poisson summation conjecture for $Y_{P,P'}$.  We do not know if $Y_{P,P'}$ is affine, but it is clearly quasi-affine.  We also do not know whether it is always spherical under the action of a suitable reductive subgroup of $H$, but this is true in many cases \cite{Gaetz,Hodges}.  

In Theorem \ref{thm:PS:intro} we state our Poisson summation formula in an imprecise form.   Let $K_M$ be the maximal compact subgroup of $M^{\mathrm{ab}}(\A_F)$.

\begin{thm} \label{thm:PS:intro}
Let $f \in \mathcal{S}(Y_{P,P'}(\A_F))$.  Assume
\begin{enumerate}
    \item $F$ is a function field,
    \item $F$ is a number field, Conjecture \ref{conj:poles:intro} is valid, and $f$ is $K_M$-finite, or 
    \item $F$ is a number field and Conjecture \ref{conj:poles} is valid.
\end{enumerate}
One has
\begin{align*} 
\sum_{y \in Y_{P}(F)}f(y )+*
&=
\sum_{y^* \in Y_{P^*}(F)}\mathcal{F}_{P|P^*}(f)(y^* )+**.
\end{align*} The sums over $y$ and $y^*$ are absolutely convergent.  
\end{thm}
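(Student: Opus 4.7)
The plan is to use Mellin inversion to reduce the Poisson summation formula to a contour-shift argument on the Mellin-transformed side. Fix $\chi$ and $s_1, \ldots, s_k$ with large real parts. For $f \in \mathcal{S}(Y_{P,P'}(\A_F))$, the Mellin transform $f_{\chi_s}$ defined in \eqref{Mellin} converges absolutely when $\mathrm{Re}(s_0)$ is sufficiently large, and Mellin inversion in the $s_0$ variable (together with the usual sum over unitary characters of the relevant quotient of $A_{\GG_m} F^\times \backslash \A_F^\times$) recovers $f$ on $Y_P(\A_F)$. Interchanging this contour integral with the summation over $y \in Y_P(F)$, which is justified by the absolute convergence conditions built into the Schwartz space, yields
\[
\sum_{y \in Y_P(F)} f(y) \;=\; \frac{1}{2\pi i}\int_{\mathrm{Re}(s_0) = \sigma} E_{Y_P}(f_{\chi_s}) \, ds_0
\]
for $\sigma$ large (with the character sum suppressed).

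Next I would shift the contour from $\mathrm{Re}(s_0) = \sigma$ to $\mathrm{Re}(s_0) = -\sigma$. This requires meromorphic continuation of $E_{Y_P}(f_{\chi_s})$ in $s_0$ and polynomial growth in imaginary directions. I would obtain the continuation by writing $E_{Y_P}(f_{\chi_s})$ in terms of a Langlands-Eisenstein series attached to the maximal parabolic $P < P'$, exploiting the geometric construction of $Y_{P,P'}$ as a partial closure governed by $P \backslash P'$; assumption \eqref{P'assump} ensures that only a single functional equation in $s_0$ is relevant. Conjecture \ref{conj:poles:intro} (in case (2), combined with the $K_M$-finite hypothesis) or the stronger Conjecture \ref{conj:poles} (in case (3)) bounds the poles crossed, and their residues comprise the correction term $*$ appearing in the statement.

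On the shifted contour I would invoke the standard intertwining operator $I_P(\chi_s) \to I_{P^*}^*(\chi_s)$. The twisted-equivariance property defining $\mathcal{F}_{P|P^*}$ (Lemma \ref{lem:equiv}) is designed so that, on the image of the Mellin transform, this intertwining operator sends $f_{\chi_s}$ to $\mathcal{F}_{P|P^*}(f)_{\chi_s}^*$, hence the shifted integrand is precisely $E_{Y_{P^*}}^*(\mathcal{F}_{P|P^*}(f)_{\chi_s}^*)$. Reversing the Mellin transform on the $P^*$ side then reconstructs $\sum_{y^* \in Y_{P^*}(F)} \mathcal{F}_{P|P^*}(f)(y^*)$, together with a second residual contribution $**$ arising from poles of $E_{Y_{P^*}}^*$ that are encountered when its contour is pushed back out.

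The main obstacle will be the analytic control needed to legitimize the contour shift: meromorphic continuation of $E_{Y_P}(f_{\chi_s})$ beyond the domain of absolute convergence, polynomial vertical growth, and only finitely many poles in any bounded vertical strip. The basic meromorphic continuation follows from that of the associated Langlands-Eisenstein series, but the finiteness-of-poles-in-a-strip statement is precisely what the standing hypotheses (Conjecture \ref{conj:poles:intro} or \ref{conj:poles}) are designed to provide; polynomial growth in imaginary directions then follows from standard bounds on intertwining operators once the reduction to a Langlands-Eisenstein series is in place. In the function field case (1) all of this collapses to rational-function manipulation in $q^{-s_0}$, so the contour shift becomes unconditional.
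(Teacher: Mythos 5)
Your engine (Mellin inversion, a contour shift across the poles, and the functional equation of a degenerate Eisenstein series) is the right one, but you apply it to the wrong object, and as written the argument is circular. You propose to shift contours in $s_0$ for $E_{Y_P}(f_{\chi_s})$ itself, invoking its meromorphic continuation and functional equation obtained ``by writing $E_{Y_P}(f_{\chi_s})$ in terms of a Langlands--Eisenstein series attached to $P<P'$.'' But $E_{Y_P}(f_{\chi_s})$ is a sum over $M^{\mathrm{ab}}(F)\backslash Y_{P}(F)$, i.e.\ over a proper subvariety of the flag variety, so it is not a Langlands Eisenstein series and Langlands' theory does not apply to it directly; its meromorphic continuation and functional equation in $s_0$ are exactly the content of Theorem \ref{thm:ES:intro}, which the paper \emph{deduces from} Theorem \ref{thm:PS:intro}, not the other way around. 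The missing idea that effects the reduction to genuine Langlands theory is the orbit decomposition of Lemma \ref{lem:cosets}: $Y_{P}(F)=\coprod_{y\in\Xi}\iota_y(X^\circ_{P\cap M_{\beta_0}}(F))$ with $\Xi$ a set of representatives for $P'^{\mathrm{der}}(F)\backslash Y(F)$, together with the fact (Proposition \ref{prop:iotay:glob}) that $\iota_y^*(f)\in\mathcal{S}(X_{P\cap M_{\beta_0}}(\A_F))$. The paper then applies the Poisson summation formula for the Braverman--Kazhdan space $X_{P\cap M_{\beta_0}}$ (Theorem \ref{thm:PS:BK}) fiberwise and reassembles using the compatibility $\iota_y^*\circ\mathcal{F}_{P|P^*}=\mathcal{F}_{P\cap M_{\beta_0}|P^*\cap M_{\beta_0}}\circ\iota_y^*$. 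The Mellin inversion, contour shift, pole crossing (controlled by Conjecture \ref{conj:poles:intro} or \ref{conj:poles}), and functional equation all take place on the auxiliary space $X_{P\cap M_{\beta_0}}$, where the inner sums really are degenerate Eisenstein series on the simple group $M_{\beta_0}$; the boundary terms $*$ and $**$ are then the residues summed over $y$, $\chi$, and $s_0$. Also note that Mellin inversion in the single variable $s_0$ does not recover $f$ from $f_{\chi_s}$: the transform \eqref{Mellin} is over all of $M^{\mathrm{ab}}(\A_F)$, so inversion involves all $|\Delta_P|$ variables and the full character sum.

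Two further points. First, even on the auxiliary space the operator realizing the functional equation is not the bare intertwining operator but $\mu_P(\chi_s)\mathcal{R}_{P|P^*}$ (Theorem \ref{thm:FT}); the normalizing operator $\mu_P$ is essential for the shifted integrand to be the Mellin transform of $\mathcal{F}_{P|P^*}(f)$. Second, in the number field case the contour shift is first justified only for $K_\infty$-finite archimedean data (this is where Theorem \ref{thm:ES:estimate} supplies the vertical decay), and extending to general $f\in\mathcal{S}(Y_{P,P'}(\A_F))$ requires a density and continuity argument for the residue functionals, with Lemma \ref{lem:Macts} used to isolate individual residues; your proposal does not address this step.
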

\noindent 
 The precise version of this theorem is given in Theorem \ref{thm:PS} below. In Theorem \ref{thm:PS:intro} the contributions marked $*$ and $**$ are certain boundary terms coming from residues of auxilliary degenerate Eisenstein series.  Using Lemma \ref{lem:Macts} one can choose many $f$ so that these contributions vanish.

Theorem \ref{thm:PS:intro} is our main theorem in the context of Poisson summation formulae.  It is a vast generalization of the Poisson summation formula for Braverman-Kazhdan spaces of maximal parabolic subgroups in reductive groups \cite{BK:normalized} which in turn is a vast generalization of the Poisson summation formula for a vector space.  The work \cite{BK:normalized} in fact treats arbitrary parabolic subgroups, and with more effort one could probably use it to generalize our work to the case where $P$ is not maximal in $P'.$  

\begin{rem}
In the degenerate case $H=G,$ Theorem \ref{thm:PS:intro} reduces to the Poisson summation formula for the Braverman-Kazhdan space $X_{P}.$  In this special case under suitable assumptions on $f$ the formula was proved in \cite{BK:normalized}.  When $G=\mathrm{Sp}_{2n}$ and $P$ is the Siegel parabolic it was proved for general test functions finite under a maximal compact subgroup of $\mathrm{Sp}_{2n}(\A_F)$ in \cite{Getz:Liu:BK}.
\end{rem}

\subsection{Conjectures \ref{conj:poles:intro} and \ref{conj:poles}} Let $M_{\beta_0}$ be the simple normal subgroup 
of the Levi subgroup $M'$ of $P'$ defined in \eqref{M:decomp}
below.
For any topological abelian group $A$ we denote by $\widehat{A}$ the set of quasi-characters of $A,$ that is, continuous homomorphisms $A \to \CC^\times.$

For $Q \in \{P,P^*\}$ let $\mathcal{S}(X_{Q \cap M_{\beta_0}}(\A_F))$ be the Schwartz space of \eqref{SspacesX}.
For any
\begin{align*}
    (m,f_1,f_2,\chi,s) \in M_{\beta_0}(\A_F) \times  \mathcal{S}(X_{P \cap M_{\beta_0}}(\A_F)) \times \mathcal{S}(X_{P^* \cap M_{\beta_0}}(\A_F)) \times \widehat{A_{\GG_m} F^\times \backslash \A_F^\times} \times \CC
\end{align*}
let $\chi_s:=\chi|\cdot|^s$ and  form the degenerate Eisenstein series 
\begin{align} \begin{split}
E(m,f_{1\chi_s})&=\sum_{x\in (P \cap M_{\beta_0}) \backslash M_{\beta_0}(F)}f_{1\chi_s}(xm)\\ E^*(m,f_{2\chi_s}^*)&=\sum_{x \in (P^* \cap M_{\beta_0}) \backslash M_{\beta_0}(F)}f_{2\chi_s}^*(xm).\end{split}
\end{align}
They converge for $\mathrm{Re}(s)$ large enough (resp.~$\mathrm{Re}(s)$ small enough).   Here $f_{1\chi_s}$ and $f_{2\chi_s}^*$ are the Mellin transforms of  \eqref{Mellin}  in the special case $P'=M_{\beta_0}.$   

  Let $K \leq M_{\beta_0}(\A_F)$ be a maximal compact subgroup.  The following conjecture appeared in the statements of theorems \ref{thm:ES:intro} and \ref{thm:PS:intro} above:

\begin{conj} \label{conj:poles:intro}
For each character $\chi\in \widehat{A_{\GG_m} F^\times \backslash \A_F^\times}$ there is a finite set $\Upsilon(\chi) \subset \CC$ such that if $
E(m,f_{\chi_s})
$
has a pole for any $K$-finite
$f \in 
 \mathcal{S}(X_{P \cap M_{\beta_0}}(\A_F))
 $
 then  $s \in \Upsilon(\chi).$  
\end{conj}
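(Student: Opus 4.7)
The plan is to recast the conjecture as a statement about poles of a one-parameter family of degenerate Eisenstein series on the simple group $M_{\beta_0}$. Assumption \eqref{P'assump} forces $P \cap M_{\beta_0}$ to be a maximal parabolic subgroup of $M_{\beta_0}$, so $E(m, f_{\chi_s})$ is induced from a (one-dimensional) character of a maximal parabolic in a simple group and depends on the single complex variable $s$. This is the cleanest setting for Langlands' theory of poles of Eisenstein series.

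The strategy has two steps. First, analyze the Mellin transform $f \mapsto f_{\chi_s}$ as a meromorphic family of sections of $I_{P \cap M_{\beta_0}}(\chi_s)$. By the existing Braverman-Kazhdan theory of Schwartz spaces on affine closures of Braverman-Kazhdan spaces (see \cite{BK:normalized,Getz:Liu:BK,Getz:Hsu}), for $K$-finite $f$ the Mellin transform admits a meromorphic continuation in $s$ whose possible poles lie in a finite set depending only on $\chi$ and on the finite collection of characters of $M \cap M_{\beta_0}$ appearing in the $K$-finite subspace. Second, apply Langlands' theory: the poles of $E(m, \Phi^{\chi_s})$ for any meromorphic family of sections coincide with the poles of its constant term along $P \cap M_{\beta_0}$, and this constant term is the sum of $\Phi^{\chi_s}$ and its image under the standard intertwining operator $M(w, \chi_s)$, where $w$ is the unique nontrivial representative of $W(M \cap M_{\beta_0}, T) \backslash W(M_{\beta_0}, T)$. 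Because $P \cap M_{\beta_0}$ is maximal, $M(w, \chi_s)$ can be normalized by a finite product of abelian $L$-functions attached to the decomposition of the adjoint action of $M \cap M_{\beta_0}$ on the Lie algebra of the unipotent radical of $P \cap M_{\beta_0}$, and all of its poles are controlled by the poles of these $L$-functions. The desired set $\Upsilon(\chi)$ is then the union of the finitely many possible poles coming from the Mellin transform and from the normalizing $L$-functions.

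The main obstacle is making rigorous the claim that the poles of $M(w, \chi_s) f_{\chi_s}$ are genuinely controlled by a finite product of abelian $L$-functions that depends only on $\chi$. For $G = \mathrm{SL}_n$ this is essentially the content of the Moeglin-Waldspurger analysis of the residual spectrum combined with the analytic properties of Godement-Jacquet zeta integrals, which is why the conjecture is known in that case. In general one would need either the Langlands-Shahidi method to produce normalizing $L$-functions with provably finite pole sets, or an appropriate Rankin-Selberg or doubling integral representation for the relevant degenerate Eisenstein series. Since neither of these is available in full generality, the statement is appropriately left as a conjecture, and the theorems in \S 1 are stated conditionally on it over number fields.
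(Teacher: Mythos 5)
You have correctly identified that this statement is a conjecture which the paper does not prove: the authors only verify it when $M_{\beta_0}=\SL_n$ (Theorem \ref{thm:GLn}) and verify the stronger Conjecture \ref{conj:poles} when $P\cap M_{\beta_0}$ is a Siegel parabolic of a symplectic group, in both cases by reduction to results already in the literature (Hanzer--Mui\'c, Ikeda). Your closing assessment --- that the missing ingredient in general is control of the poles of the normalized intertwining operators, ultimately a question about reducibility of local degenerate principal series, especially at archimedean places --- is exactly the obstruction the paper names at the end of \S\ref{sec:poles}. Your Step 1 is also consistent with the paper's setup: by the definition of the Schwartz space, $f_{\chi_s}$ is a good section, i.e.\ a holomorphic multiple of $a_{P|P}(\chi_s)$, a finite product of completed Hecke $L$-functions with finitely many poles for fixed $\chi$.

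Two points of comparison with what the paper actually does. First, the paper's reduction (Lemma \ref{lem:enough}) does not proceed through the constant term directly: it uses the holomorphy and nonvanishing of $a_{P|P}(\chi_s)$ for $\mathrm{Re}(s)>0$, the boundedness of the pole order on $\mathrm{Re}(s)=0$ via Theorem \ref{thm:ES:Langlands}, and the functional equation $E(g,f_{\chi_s})=E^*(g,\mathcal{F}_{P|P^*}(f)^*_{\chi_s})$ to fold the left half-plane into the right half-plane for a second family of Eisenstein series on the conjugate parabolic $Q$; the conjecture is thereby reduced to finiteness of the pole set in $\CC_+$ for holomorphic $K$-finite sections of $I_P(\chi_s)$ and $I_Q(\chi_s)$. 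Second, your description of the constant term as ``$\Phi^{\chi_s}$ plus its image under the single intertwining operator $M(w,\chi_s)$'' is an oversimplification: the inducing datum $\chi_s\circ\omega_P$ is a character of a Levi that is generally not a torus, so it is not cuspidal on that Levi, and the constant term of the degenerate Eisenstein series along $P\cap M_{\beta_0}$ is a sum indexed by $W(M\cap M_{\beta_0})\backslash W(M_{\beta_0})/W(M\cap M_{\beta_0})$, which can have many elements (e.g.\ $n+1$ terms for the Siegel Eisenstein series on $\mathrm{Sp}_{2n}$). This does not invalidate your strategy --- each term is still a product of abelian $L$-function ratios times a normalized operator --- but the bookkeeping, and hence the local analysis you flag as the obstacle, is correspondingly heavier. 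Since the statement is left as a conjecture in the paper and you treat it as such, there is no gap to charge you with; your account is a fair description of both the evidence for the conjecture and the reason it remains open.
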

\noindent We point out that continuous homomorphisms $A_{\GG_m} F^\times \backslash \A_F^\times \to \CC^\times$ are automatically unitary because $A_{\GG_m} F^\times \backslash \A_F^\times$ is compact.  

In fact we expect the following stronger conjecture to be true:
\begin{conj} \label{conj:poles}
There is an integer $n$ and a finite set $\Upsilon \subset \CC$ depending only on $M_{\beta_0}$ such that if $
E(m,f_{\chi_s})
$
has a pole for any $K$-finite $f \in \mathcal{S}(X_{P \cap M_{\beta_0}}(F))$ and $\chi \in  \widehat{A_{\GG_m}F^\times \backslash \A_F^\times }$
 then $\chi^n=1$ and $s \in \Upsilon.$  
\end{conj}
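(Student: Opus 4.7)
The plan is to reduce Conjecture \ref{conj:poles} to uniform control of poles of Langlands $L$-functions, exploiting that $P \cap M_{\beta_0}$ is a maximal parabolic of the simple group $M_{\beta_0}$. First I would invoke Langlands's spectral theory: since $P \cap M_{\beta_0}$ is maximal in $M_{\beta_0}$, the poles of $E(m, f_{\chi_s})$ are contained in those of its constant term along $P \cap M_{\beta_0}$, which is the sum of the identity term and a single intertwining term $M(\chi_s) f_{\chi_s}$. Consequently all poles come from the global intertwining operator applied to Mellin transforms of Schwartz functions.

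Next I would use the defining property of the Braverman-Kazhdan Schwartz space $\mathcal{S}(X_{P \cap M_{\beta_0}}(\A_F))$: at almost every place the spherical vector attached to $f$ is the basic function, and the Gindikin-Karpelevich computation expresses the local intertwining integral applied to it as a ratio of Langlands local $L$-factors $L_v(s + c_i, \chi_v^{a_i}, r_i)$ for a finite collection of representations $r_i$ of the Langlands dual of the Levi, with integer exponents $a_i$ and complex shifts $c_i$ determined only by the combinatorics of the pair $(M_{\beta_0}, P \cap M_{\beta_0})$. For $K$-finite $f$ the ramified places contribute a finite sum with coefficients entire in $s$, so globally
\begin{align*}
M(\chi_s) f_{\chi_s} = \left( \prod_{i=1}^{N} L(s + c_i, \chi^{a_i}, r_i) \right) R(\chi_s) f_{\chi_s},
\end{align*}
where $R(\chi_s)$ depends on $f$ and is entire in $s$ by design of the Schwartz space.

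The hard step is then to show that $\prod_i L(s+c_i, \chi^{a_i}, r_i)$ has poles only when $\chi^n=1$ for some fixed $n$ depending on $M_{\beta_0}$ and only at $s$ lying in a fixed finite set $\Upsilon$. For the Hecke factors (degree-one $r_i$) this is classical: an abelian $L$-function on $A_{\GG_m}F^\times \backslash \A_F^\times$ has a pole only at the trivial character, so one may take $n = \mathrm{lcm}(a_i)$ among those factors. For the higher-degree factors one needs a Langlands-Shahidi style pole criterion: the poles of $L(s, \chi, r_i)$ should form a finite arithmetic progression whose location depends only on $r_i$, and existence of such a pole should force $\chi$ to be torsion of bounded order and $s$ to lie in a fixed finite set. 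This is the main obstacle, since in full generality it requires one to know that $r_i$ corresponds to an automorphic $L$-function (Langlands functoriality for the maps $r_i$); it is presently available only in specific cases such as $M_{\beta_0}$ of type $A$, classical types via the endoscopic classification, or settings where the $L$-functions appear through Rankin-Selberg or doubling integrals. This is why the statement appears as a conjecture rather than a theorem.
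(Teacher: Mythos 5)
First, note that the statement you are asked to prove is stated in the paper as a \emph{conjecture}: the paper establishes it only when $P\cap M_{\beta_0}$ is the Siegel parabolic of a symplectic group (via Ikeda), and establishes the weaker Conjecture \ref{conj:poles:intro} for $\SL_n$ (via Hanzer--Mui\'c). So there is no complete proof in the paper to match; your proposal, which you yourself acknowledge is incomplete, should be judged as a strategy. The strategy itself --- reduce to the constant term of the degenerate Eisenstein series along the (maximal) parabolic, hence to the global intertwining operator, hence to Gindikin--Karpelevich factors at unramified places plus finitely many bad local factors --- is essentially the route taken in the references the paper leans on, and is compatible with the paper's own reduction (Lemma \ref{lem:enough2}), except that the paper is careful to track poles of Eisenstein series attached to both $P$ and the standard parabolic $Q$ conjugate to $P^*$, since for a non-self-associate maximal parabolic the intertwining term does not appear in the constant term along $P$ itself.

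The genuine problem is that you misdiagnose where the difficulty lies. For these \emph{degenerate} Eisenstein series the inducing datum is a character of the Levi, so the representations $r_\lambda$ in the Gindikin--Karpelevich formula are evaluated on the Satake parameter of the trivial representation; the paper's Proposition \ref{prop:basic:fixed} (see \eqref{relate:0}) computes that the resulting global normalizing factor is a finite product of shifted completed \emph{Hecke} $L$-functions $L(-s_i,\chi_s\circ\lambda_i)$ with $\lambda_i$ positive multiples of $\beta_0^\vee$ (this is exactly how $a_{P|P}$ and $a_{P|P^*}$ are defined in \eqref{ap}, and Lemma \ref{lem:holo2} uses precisely the fact that these are Hecke $L$-functions). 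The pole locations of such products are completely understood --- a completed Hecke $L$-function has a pole only for the trivial character and at two specific points --- so no instance of Langlands functoriality for the $r_i$ is needed, and your ``hard step'' is not the obstruction. The actual obstruction, which your proposal dismisses with the assertion that ``the ramified places contribute a finite sum with coefficients entire in $s$,'' is the finiteness of poles of the normalized local intertwining operators on $K$-finite sections at the finitely many bad places, above all the archimedean ones: the good-section condition in the definition of the Schwartz space normalizes by $a_{P|Q}$, not by the Langlands--Shahidi ratio appearing in the constant term, and controlling the discrepancy amounts to analyzing reducibility points of local (especially archimedean) degenerate principal series. This is exactly the missing ingredient the paper identifies at the end of \S\ref{sec:poles}, and it is what Ikeda and Hanzer--Mui\'c supply in the cases where the conjecture is proved.
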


  In \S \ref{sec:poles} we prove Conjecture \ref{conj:poles:intro} (or more accurately extract it from the literature) when $M_{\beta_0}$ is $\mathrm{SL}_n$ and Conjecture \ref{conj:poles}  when $P \cap M_{\beta_0}$ is a Siegel parabolic in the symplectic group $M_{\beta_0}.$  
The case of other simple groups remains open, but provided that $M_{\beta}$ is not of type $E$ or $F$ Hsu \cite{Hsu} has reduced Conjecture \ref{conj:poles} and \ref{conj:poles:intro} to a local archimedean statement.

We point out that the natural analogues of conjectures \ref{conj:poles:intro} and \ref{conj:poles} for general sections of   $I_P(\chi_s)$ are false:
\begin{example}
    Let $P$ be the Borel subgroup of upper triangular matrices in $\GL_2,$ let $K \leq \GL_2(\A_\QQ)$ be a maximal compact subgroup, and let $f_{\chi_s}$ be a standard section, that is, $f_{\chi_s}(k)$ is independent of $s=(s_1, s_2)$ for all $k \in K.$  
    Write $\chi_s\begin{psmatrix}a & \\ & b \end{psmatrix}=\chi_{1s_1}(a)\chi_{2s_2}(b),$ and assume for simplicity that $\chi_1\chi_2^{-1} \neq 1.$   Let $S$ be a finite set of places of $\QQ$ including all infinite places such that $\chi_1$ and $\chi_2$ are unramified outside of $S.$  Then $E(m,f_{\chi_s})$ has poles at every zero of $L^S(s_1-s_2+1,\chi_{1}\chi_{2}^{-1})$ by \cite[Theorem 3.7.1]{Bump:AFR}.  The corresponding normalized Eisenstein series defined in loc.~cit.~has at most one pole.
\end{example}

\noindent This is why the sections 
in conjectures \ref{conj:poles:intro} and \ref{conj:poles} are assumed to be Mellin transforms of elements of the Schwartz space.

\subsection{On integral representations of $L$-functions}
\label{ssec:int:reps}

In this subsection we make some comments on how one might try to use Theorem \ref{thm:ES:intro} to study integral representations of $L$-functions.  Though this discussion is purely speculative, we include it as motivation for several questions regarding algebraic homogeneous spaces.  The questions are posed below.

Let a subgroup $H \leq G$ act on $G$ via right multiplication, and assume that $Y$ is stable under the action of $H.$ For example, we could take $Y=P'\gamma H$ for some $\gamma \in G(F).$   

\begin{rem}
The subgroup of $G$ stabilizing  $P^{\mathrm{der}} \backslash \overline{PwB}$ on the right contains $B$ and hence is a parabolic subgroup of $G.$  On the other hand the stabilizer of  $P^{\mathrm{der}} \backslash P \gamma H$ may have reductive neutral component.  Indeed, one could take $H$ to be any maximal connected reductive subgroup of $G$ such that $P\gamma H \neq G;$ then the neutral component of the stabilizer of $P\gamma H$ on the right is $H.$    For example, one could take $H$ to be the neutral component of an orthogonal similitude group in $G=\GL_n.$   Let $P'$ be a maximal parabolic subgroup of $\GL_n$ with Levi quotient $\GL_{n-1} \times \GL_1.$  Then $P' \backslash \GL_n \cong \mathbb{P}^{n-1},$ and the natural action of $H$ on $\mathbb{P}^{n-1}$ is not transitive, hence for any parabolic subgroup $P$ maximal in $P'$ one has $P\gamma H \neq \GL_n$ for any $\gamma \in \GL_n(F).$    
\end{rem}

Assume that $Y$ admits an open $H$-orbit.   Let $\varphi$ be a cusp form in a cuspidal automorphic representation $\pi$ of $H(\A_F)$.
If one wanted to use Theorem \ref{thm:ES:intro} to study integral representations of $L$-functions one could try to investigate expressions of the form
\begin{align} \label{this:exp}
\int_{H(F) \backslash H(\A_F)}\varphi(h) E_{Y_P}(R(h)f_{\chi_s}) dh
\end{align}
where $R(h)f_{\chi_s}(y)=f_{\chi_s}(yh).$
If convergent, \eqref{this:exp} admits a functional equation because 
 $E_{Y_P}(R(h)f_{\chi_s})$ admits a functional equation by Theorem \ref{thm:ES:intro}.   The problem is deciding whether or not \eqref{this:exp} or some variant of it yields any new  $L$-functions.  Ignoring all questions of convergence, the integral above unfolds into a sum
\begin{align}
    \sum_{y_0 \in M^{\mathrm{ab}}(F) \backslash Y_{P}(F)/H(F)} \int_{H_{y_0}(F) \backslash H(\A_F)}\varphi(h)f_{\chi_s}(y_0h)dh.
\end{align}
where $H_{y_0}$ is the stabilizer of $y_0 \in (M^{\mathrm{ab}} \backslash Y_P)(F)$ in $H.$
Due to the conjectures of Sakellaridis and Venkatesh \cite{SakellaridisSph,SV} one expects this expression to unfold into a finite sum of Eulerian integrals when $M^{\mathrm{ab}} \backslash Y_{P}$ is spherical as an $H$-scheme.  

This motivates the following questions: 
\begin{enumerate}
    \item When is $M^{\mathrm{ab}} \backslash Y_P$ spherical as an $H$-scheme? 
    \item If $M^{\mathrm{ab}} \backslash Y_P$ is spherical, what is the stabilizer in $H$ of a point in the open $H$-orbit?
    \item If $H$ and a spherical subgroup $H' \leq H$ are fixed, can one classify the possible spherical embeddings of $H' \backslash H$ obtained from $M^{\mathrm{ab}} \backslash Y_P$ as $P,$ $P',$ $G,$ and $Y$ vary?
\end{enumerate}
We can also pose analogous questions when $H$ is not necessarily reductive by replacing $H$ by a Levi subgroup as in \cite{Gaetz,Hodges}. Finally, it is of interest to consider the questions above in the Kac-Moody setting mentioned in Remark 1 after Theorem \ref{thm:ES:intro}.

%%%%%%%%%%%%%%%%%%%%%%%%%%%%%%%%%%%%%%%%%%%%%%%%%%%%%%%

\subsection{Outline}
We outline the paper and give some indication of the proofs.  We begin with observations on the underlying geometry we are considering  in \S \ref{sec:GS}.  We then define the Schwartz space of $Y_{P,P'}$ locally and adelically in \S \ref{sec:Ss}.  We also construct a Fourier transform and prove that the Fourier transform preserves the Schwartz space.  To accomplish this we reduce the question to a similar statement on an auxilliary Braverman-Kazhdan space and then use the methods developed in \cite{Getz:Liu:BK,Getz:Hsu:Leslie}.  

We then prove a Poisson summation formula for the auxilliary Braverman-Kazhdan space in \S \ref{sec:PS:BK}.  This formula is proved in \cite{BK:normalized} for a different definition of the Schwartz space with some restrictions on the test functions involved.  It was obtained for arbitrary test functions in \cite{Getz:Liu:BK} in a special case.  In Theorem \ref{thm:PS:BK} below we deduce it for all Braverman-Kazhdan spaces attached to maximal parabolic subgroups.  We point out that it is most natural in our setting, and perhaps even necessary, to work with sections that are not finite under a maximal compact subgroup of $G(F_\infty)$ when $F$ is a number field (see \S \ref{sec:Ss}).  
Hence, some care is required in applying well-known results on Eisenstein series in this work.    Indeed, our sections need not even be standard, so we cannot use Lapid's results in \cite{LapidRem}.

In \S \ref{sec:proofs} we prove theorems \ref{thm:ES:intro} and \ref{thm:PS:intro}.  The procedure is to first prove Theorem \ref{thm:PS:intro} by reducing it to the summation formula of \S \ref{sec:PS:BK}.  We then use Theorem \ref{thm:PS:intro} to deduce Theorem \ref{thm:ES:intro}.
In \S \ref{sec:poles} we verify Conjecture \ref{conj:poles:intro} in certain cases.

To address questions of the referee, we have added Appendix \ref{App}.  In it we indicate possible generalizations of our results.  We also give some commentary on why it is convenient to work with Schwartz spaces instead of working solely with Eisenstein series and intertwining operators.

\subsection*{Acknowledgments} 
We appreciate the encouragement and questions of D.~Bump.  Answering his questions led to a generalization of our original main result and simplifications in exposition.
The authors thank S.~Leslie for help with parabolic subgroups and M.~Brion for answering several questions about Schubert cells (and in particular for observing Lemma \ref{lem:CP:smooth}).  We also thank D.~Ginzburg and F.~Shahidi for encouragement, S.~Kudla for answering a question on Eisenstein series and M.~Hanzer for explaining how to derive Theorem \ref{thm:GLn} from her paper \cite{Hanzer:Muic} with G.~Muic.
The second author thanks H.~Hahn for her constant support and encouragement and for her help with the structure of the paper. 
Finally, both authors thank the anonymous referee for suggestions that improved the exposition.

\section{Preliminaries} \label{sec:GS}

\subsection{Braverman-Kazhdan spaces}
\label{sec:ss:BK:spaces}

We work over a field $F.$  
Let $G$ be a connected split semisimple group over $F.$
We only consider parabolic subgroups of $G$ containing  a fixed split torus $T$; for such a subgroup $P$ we write $N_P$ for its unipotent radical.
We fix a Levi subgroup $M$ of $G$ containing $T$ and write
\begin{align}
M^{\mathrm{ab}}:=M/M^{\mathrm{der}}.
\end{align}
This is again an algebraic group \cite[\S 5.c]{Milne:AGbook}.
For all parabolic subgroups $P$ of $G$ with Levi subgroup $M$ we have a Braverman-Kazhdan space
\begin{align}
X_{P}^\circ:=P^{\mathrm{der}} \backslash G.
\end{align}
Here the quotient exists in the category of schemes and is constructed in the usual manner \cite[Appendix B]{Milne:AGbook}.
We observe that 
\begin{align} \label{der:P}
    P^{\mathrm{der}}=M^{\mathrm{der}}N_P
\end{align}
where $N_P$ is the unipotent radical of $P.$
The scheme $X_P^{\circ}$ is strongly quasi-affine, i.e.
\begin{align}
X_P:=\overline{X_P^{\circ}}^{\mathrm{aff}}:=\mathrm{Spec}(\Gamma(X_P^{\circ},\OO_{X_P^{\circ}}))
\end{align}
is an affine scheme of finite type over $F$ and the natural map $X_P^{\circ} \to X_P$ is an open immersion  \cite[Theorem 1.1.2]{Braverman:Gaitsgory}. Strictly speaking, in loc.~cit.~Braverman and Gaitsgory work over an algebraically closed field, but their results hold in our setting by fpqc descent along $\mathrm{Spec}(\bar{F}) \to \mathrm{Spec}(F).$ A convenient reference for fpqc descent is \cite[Theorem 4.3.7]{Poonen:Rational}.

\begin{lem} \label{lem:surj}
The torus $M^{\mathrm{ab}}$ is split.  The maps
\begin{align*}
M(F) \lto M^{\mathrm{ab}}(F) \quad \textrm{and} \quad 
G(F) \lto (P^{\mathrm{der}} \backslash G)(F)
\end{align*}
are surjective.  
\end{lem}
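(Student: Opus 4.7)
The plan is to prove the three assertions in order, with the second being the main technical point.

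First, I would observe that $M^{\mathrm{ab}}$ is split. Since $M$ is connected reductive with maximal torus $T$, one has $M = T \cdot M^{\mathrm{der}}$ as algebraic groups (because $Z(M)^\circ \subseteq T$, using that the centralizer in $M$ of the maximal torus $T$ is $T$ itself, combined with the standard decomposition $M = Z(M)^\circ \cdot M^{\mathrm{der}}$). Thus the quotient map $M \twoheadrightarrow M^{\mathrm{ab}}$ restricts to a surjective homomorphism $T \twoheadrightarrow M^{\mathrm{ab}}$ of algebraic groups, exhibiting $M^{\mathrm{ab}}$ as a quotient of the split torus $T$, hence split.

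Next I would prove surjectivity of $M(F) \to M^{\mathrm{ab}}(F)$. Since this map factors through the inclusion $T(F) \hookrightarrow M(F)$, it suffices to show $T(F) \to M^{\mathrm{ab}}(F)$ is surjective. The key structural claim is that $K := \ker(T \twoheadrightarrow M^{\mathrm{ab}}) = T \cap M^{\mathrm{der}}$ is a maximal torus of $M^{\mathrm{der}}$, in particular connected. To see this, the identity component $K^\circ$ is a closed connected subgroup of $T$, hence a subtorus of dimension $\dim T - \dim M^{\mathrm{ab}} = \mathrm{rk}(M^{\mathrm{der}})$ (using $\dim Z(M)^\circ = \dim M^{\mathrm{ab}}$), and is therefore a maximal torus of $M^{\mathrm{der}}$. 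Every element of $K$ centralizes $K^\circ$ since $T$ is commutative, so it lies in $Z_{M^{\mathrm{der}}}(K^\circ) = K^\circ$; thus $K = K^\circ$ is connected. As a subtorus of the split torus $T$, $K$ is itself split, so Hilbert 90 gives $H^1(F, K) = 0$. The long exact sequence of Galois cohomology associated to $1 \to K \to T \to M^{\mathrm{ab}} \to 1$ then yields the surjectivity of $T(F) \to M^{\mathrm{ab}}(F)$.

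For the surjectivity of $G(F) \to (P^{\mathrm{der}} \backslash G)(F)$, I would combine two ingredients. The first is the classical surjectivity of $G(F) \to (P \backslash G)(F)$ for split $G$, which follows from the Bruhat decomposition: each cell $P \backslash PwB$ is parametrized by an $F$-split unipotent group, providing an $F$-rational section to $G$. The second is that $P^{\mathrm{der}} \backslash G \to P \backslash G$ is a principal $M^{\mathrm{ab}}$-bundle with fiber $P^{\mathrm{der}} \backslash P \cong M^{\mathrm{ab}}$, and $H^1(F, M^{\mathrm{ab}}) = 0$ by Hilbert 90 since $M^{\mathrm{ab}}$ is split. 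Given $y \in (P^{\mathrm{der}} \backslash G)(F)$, I would lift its image in $(P \backslash G)(F)$ to some $g \in G(F)$, obtaining a basepoint $y_0 = P^{\mathrm{der}} g$ in the same fiber; then $y = [m] \cdot y_0$ for a unique $[m] \in M^{\mathrm{ab}}(F)$, and by the previous step $[m]$ lifts to $m \in M(F) \subset G(F)$, so $mg \in G(F)$ maps to $y$. The main obstacle is the structural fact that $T \cap M^{\mathrm{der}}$ is connected; once that is established, the rest is a formal Galois-cohomological chase using Hilbert 90 for split tori and the Bruhat decomposition.
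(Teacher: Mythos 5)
Your proof is correct. Note that the paper does not actually write out an argument for this lemma: it simply cites \cite[Lemma 3.2 and Corollary 3.3]{Getz:Hsu:Leslie}, where the statement is proved for $P$ maximal, and remarks that the same proof gives the general case. What you have written is a complete, self-contained proof along exactly the standard lines that the cited argument follows: reduce everything to Hilbert 90 for split tori, using (i) that $T\twoheadrightarrow M^{\mathrm{ab}}$ with kernel the split maximal torus $T\cap M^{\mathrm{der}}$ of $M^{\mathrm{der}}$, and (ii) that $G(F)\to (P\backslash G)(F)$ is surjective via the Bruhat cells, with the fibration $P^{\mathrm{der}}\backslash G\to P\backslash G$ being an $M^{\mathrm{ab}}$-torsor trivialized over any rational point. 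The only step worth flagging is your claim that $K=T\cap M^{\mathrm{der}}$ is connected: your centralizer argument handles the reduced group, and one should also know that the scheme-theoretic kernel of $T\to M^{\mathrm{ab}}$ is smooth (equivalently, that $X^*(M^{\mathrm{ab}})=X^*(M)$ sits inside $X^*(T)$ as a saturated sublattice, being the orthogonal complement of the coroot lattice $\ZZ\Phi_M^\vee$); this is standard and easy, but in positive characteristic it is the statement you actually need so that $H^1(F,K)$ vanishes by Hilbert 90 rather than being a nontrivial $H^1$ of a group of multiplicative type such as $\mu_n$.
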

\begin{proof}
In \cite[Lemma 3.2 and Corollary  3.3]{Getz:Hsu:Leslie} this is proved in the special case where $P$ is a maximal parabolic.  The same proof implies the more general statement here.
\end{proof}

   We have a right action of $M^{\mathrm{ab}} \times G$ on $X_P^{\circ}$ given on points in an $F$-algebra $R$ by
\begin{align} \label{geo:action} \begin{split}
X^{\circ}_{P}(R) \times M^{\mathrm{ab}}(R) \times G(R) &\lto X^\circ_{P}(R)\\
(x,m,g) &\longmapsto m^{-1}xg. \end{split}
\end{align}
This action extends to an action of $M^{\mathrm{ab}} \times G$ on $X_P.$

We now discuss the Pl\"ucker embedding of $P^{\mathrm{der}} \backslash G$
as explained in a special case in \cite{Getz:Hsu:Leslie}.  We assume for simplicity that $G$ is simply connected.  Let  $T\leq B \leq P$ be a Borel subgroup, let $\Delta_G$ be the set of simple roots of $T$ in $G$ defined by $B$ and $\Delta_M$ the set of simple roots of $T$ in $M$ defined by $B \cap M.$
Let
\begin{align} \label{betak}
\Delta_P:=\Delta_G-\Delta_M
\end{align}
 be the set of simple roots in $\Delta_G$ attached to $P.$   For each $\beta \in \Delta_G$ we let 
 \begin{align} \label{Pbeta}
 B \leq P_{\beta} \leq G
 \end{align}
 be the unique maximal parabolic subgroup containing $B$ that does not contain the root group attached to $-\beta.$
 
As usual let $X^*(T)$ be the character group of $T.$  For each $\beta \in \Delta_P$ let $\omega_{\beta} \in X^*(T)_{\QQ}$ be the fundamental weight dual to the coroot $\beta^\vee,$ in other words,
\begin{align} \label{omegabeta}
\omega_{\beta}(\alpha^{\vee})=\begin{cases}1 &\textrm{ if }\alpha=\beta\\
0 &\textrm{ otherwise.}\end{cases}
\end{align}
Here $\alpha \in \Delta_G.$  Since $G$ is simply connected, the fundamental weight $\omega_{\beta}$ lies in $X^*(T).$

There is an irreducible representation
\begin{align} \label{Vbeta}
V_{\beta} \times G \lto V_{\beta}
\end{align}
of highest weight $-\omega_{\beta}$.  Here $G$ acts on the right.   
Let 
\begin{align} \label{Vcirc}
    V:=\prod_{\beta \in \Delta_P} V_\beta, \quad V^{\circ}=\prod_{\beta \in \Delta_P} (V_\beta-\{0\}).
\end{align}
Choose a highest weight vector $v_\beta \in V_\beta(F)$ for each $\beta.$    

\begin{lem} \label{lem:Plucker:stuff}
There is a closed immersion
\begin{align} \label{Pl0}
   \mathrm{Pl}_P:X_P^\circ \lto V^\circ
\end{align}
given on points by 
$$
 \mathrm{Pl}_P(g):=(v_\beta g).
$$
It extends to a $G$-equivariant map $\mathrm{Pl}_P:X_P \to V.$
The character $\omega_{\beta}$ extends to a character of $M$ for all $\beta$ and the map
\begin{align} \label{omegaP}
    \omega_P:=\prod_{\beta \in \Delta_P}\omega_{\beta}:M^{\mathrm{ab}} \lto \GG_m^{\Delta_P}
\end{align}
is an isomorphism.  If we let $M^{\mathrm{ab}}$ act via $\omega_P$ on $V$ then $\mathrm{Pl}_P$ is $M^{\mathrm{ab}}$-equivariant.  In particular for $(m,g) \in M^{\mathrm{ab}}(R) \times G(R)$ one has 
\begin{align}  \label{action}
\mathrm{Pl}_P(m^{-1}g)=(\omega_\beta(m)v_{\beta} g).
\end{align}
\end{lem}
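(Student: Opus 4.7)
The plan is to handle in turn (i) extending $\omega_\beta$ to a character of $M$, (ii) proving $\omega_P$ is an isomorphism, (iii) the stabilizer computation yielding \eqref{action}, (iv) injectivity of $\mathrm{Pl}_P$ on $X_P^\circ$, and (v) the closed-immersion property together with the extension to $X_P \to V$, by reducing (v) to the maximal parabolic case treated in \cite{Getz:Hsu:Leslie}.

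For (i) and (ii): since $G$ is simply connected, $\omega_\beta \in X^*(T)$. By \eqref{omegabeta}, $\omega_\beta(\alpha^\vee) = 0$ for every $\alpha \in \Delta_M$, so $\omega_\beta$ is trivial on $T \cap M^{\mathrm{der}}$ and descends to $M^{\mathrm{ab}}$, hence pulls back to a character of $M$ trivial on $M^{\mathrm{der}}$. The simply-connected hypothesis makes $\{\alpha^\vee\}_{\alpha \in \Delta_G}$ a $\ZZ$-basis of $X_*(T)$, so the images of $\{\beta^\vee\}_{\beta \in \Delta_P}$ form a $\ZZ$-basis of $X_*(M^{\mathrm{ab}}) = X_*(T)/\langle \alpha^\vee : \alpha \in \Delta_M\rangle$, with dual $\ZZ$-basis $\{\omega_\beta\}_{\beta \in \Delta_P}$ of $X^*(M^{\mathrm{ab}})$ by \eqref{omegabeta}. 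This gives the isomorphism $\omega_P$.

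For (iii) and (iv): standard highest-weight theory identifies the stabilizer in $G$ of the line $F v_\beta$ with the maximal parabolic $P_\beta$ (as $-\omega_\beta$ is fixed by $s_\alpha$ for $\alpha \in \Delta_M$ but not by $s_\beta$). On $F v_\beta$, $T$ acts by $v_\beta \cdot t = \omega_\beta(t)^{-1} v_\beta$, which extends to the character $\omega_\beta^{-1}$ on $P_\beta$ (trivial on $N_{P_\beta}$), so $\mathrm{Stab}_G(v_\beta) = \ker(\omega_\beta|_{P_\beta}) = P_\beta^{\mathrm{der}}$. Formula \eqref{action} follows at once. Since $\bigcap_{\beta \in \Delta_P} P_\beta = P$ is standard and $p \in P$ lies in $P^{\mathrm{der}}$ iff $\omega_\beta(p) = 1$ for all $\beta \in \Delta_P$ (using that $\omega_P$ is an isomorphism) iff $p \in \bigcap_\beta P_\beta^{\mathrm{der}}$, we obtain $P^{\mathrm{der}} = \bigcap_\beta P_\beta^{\mathrm{der}}$, and the orbit map $g \mapsto (v_\beta g)_\beta$ descends to the injection $\mathrm{Pl}_P : X_P^\circ \hookrightarrow V^\circ$.

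For (v): each $\mathrm{Pl}_{P_\beta}: X_{P_\beta} \to V_\beta$ is a $G$-equivariant closed immersion by \cite{Getz:Hsu:Leslie}. The inclusion $P^{\mathrm{der}} \subseteq P_\beta^{\mathrm{der}}$ induces $X_P^\circ \to X_{P_\beta}^\circ$, which extends by functoriality of the affine closure to $X_P \to X_{P_\beta}$; the product, post-composed with $\prod_\beta \mathrm{Pl}_{P_\beta}$, gives the $G$-equivariant morphism $\mathrm{Pl}_P: X_P \to V$ extending the orbit map. To conclude $\mathrm{Pl}_P$ is a closed immersion, it suffices to show the ring map $F[V] \to F[X_P]$ is surjective. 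By \cite{Braverman:Gaitsgory}, $F[X_P]$ decomposes multiplicity-freely as a direct sum of irreducible $G$-representations whose isotypes are indexed by the dominant weights $\lambda = \sum_{\beta \in \Delta_P} n_\beta \omega_\beta$ with $n_\beta \in \ZZ_{\geq 0}$. The Cartan component of $\bigotimes_\beta \mathrm{Sym}^{n_\beta}(V_\beta^*) \subseteq F[V]$ corresponds to $\lambda$ and is generated as a $G$-module by the product $(v_\beta^*)^{\otimes n_\beta}$ of highest weight vectors, whose pullback to $F[X_P]$ is nonzero (its value at $P^{\mathrm{der}} \cdot 1$ is $\prod_\beta v_\beta^*(v_\beta)^{n_\beta} \neq 0$). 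Since the image of $F[V]$ is a $G$-submodule meeting every isotype, it equals $F[X_P]$. The main obstacle I anticipate is precisely this surjectivity, which requires careful matching of the Braverman--Gaitsgory decomposition to the Cartan product of symmetric powers; the remaining work is root-system bookkeeping combined with the maximal-parabolic input from \cite{Getz:Hsu:Leslie}.
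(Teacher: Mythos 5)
Your parts (i)--(iv) are correct. The derivation of the isomorphism \eqref{omegaP} from the fact that simple connectedness makes $\{\alpha^\vee\}_{\alpha\in\Delta_G}$ a $\ZZ$-basis of $X_*(T)$ is a clean alternative to the paper's argument (which exhibits $x\mapsto\prod_\beta\beta^\vee(x)$ as a section of $\omega_P$ and then compares dimensions), and your identification of the stabilizer of $(v_\beta)_\beta$ with $\ker(\omega_P|_P)=P^{\mathrm{der}}$ is essentially the paper's. For (v) you take a genuinely different route: the paper never proves that $F[V]\to F[X_P]$ is surjective; instead it works with the diagram \eqref{Pl2}, uses properness of $P\backslash G$ to get that $\overline{\mathrm{Pl}}_P(P\backslash G)$ is closed in $\prod_\beta\mathbb{P}V_\beta$, and then uses that the vertical arrows are geometric quotients by $M^{\mathrm{ab}}$ and $\GG_m^{\Delta_P}$ together with \eqref{action} to conclude that $\mathrm{Pl}_P(X_P^\circ)$ is the full preimage of a closed set, hence closed in $V^\circ$. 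Your coordinate-ring argument (multiplicity-free Braverman--Gaitsgory decomposition plus Cartan components) proves the stronger statement that $X_P\to V$ is a closed immersion, which is genuinely more information.

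There is, however, a gap at the very last step: the lemma asserts that $\mathrm{Pl}_P:X_P^\circ\to V^\circ$ is a closed immersion, and this does not follow formally from $X_P\to V$ being one. What you get is that $X_P^\circ\to V$ is an immersion whose image happens to lie in $V^\circ$; to conclude it is closed in $V^\circ$ you must show $X_P^\circ=X_P\times_V V^\circ$, i.e., that no point of the boundary $X_P-X_P^\circ$ has all Pl\"ucker coordinates nonzero (compare $\mathbb{A}^1\setminus\{0\}\hookrightarrow\mathbb{A}^2$, $t\mapsto(t,0)$, which is an immersion landing in an open set where it is not closed). This missing step is exactly what the paper's properness/geometric-quotient argument supplies: any $(u_\beta)\in\mathrm{Pl}_P(X_P)\cap V^\circ(\bar F)$ maps to a point of the closed set $\overline{\mathrm{Pl}}_P(P\backslash G)$, and the fiber of $V^\circ\to\prod_\beta\mathbb{P}V_\beta$ over such a point is a single $\GG_m^{\Delta_P}$-orbit, which by \eqref{action} and the surjectivity of $\omega_P$ lies in $\mathrm{Pl}_P(X_P^\circ)$. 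You should add this (or an equivalent) argument. A separate, minor slip: your evaluation $\prod_\beta v_\beta^*(v_\beta)^{n_\beta}\neq 0$ at the identity coset is not justified, since the highest weight vector of $V_\beta^*$ pairs nontrivially with the \emph{lowest} weight space of $V_\beta$ and typically annihilates $v_\beta$; the nonvanishing you need follows instead from the fact that $g\mapsto\prod_\beta\langle v_\beta^*,v_\beta g\rangle^{n_\beta}$ is a product of nonzero regular functions on the irreducible variety $G$.
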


\noindent Here 
$$
\GG_m^{\Delta_P}:=\prod_{\beta \in \Delta_P}\GG_m.
$$
We will use similar notation below without comment.  In the introduction, we implicitly chose a bijection between $\{0,1,\dots,k\}$ and $\Delta_P$ sending $0$ to $\beta_0,$ where $\beta_0$ is  defined as in \S \ref{ssec:YP} below.

To ease notation, for any subset $\Delta \subseteq \Delta_G$ let
\begin{align} \label{VXi}
    V_{\Delta}=\prod_{\beta \in \Delta}V_{\beta} \quad \textrm{ and }\quad V_{\Delta}^\circ:=\prod_{\beta \in \Delta}(V_{\beta}-\{0\}).
\end{align}

\begin{proof}   The stabilizer of the line spanned by $v_\beta$ in $V_\beta$ is the parabolic $P_\beta$ of \eqref{Pbeta}
 \cite[\S II.8.5]{Jantzen}.  Since $P\leq P_{\beta}$ for $\beta \in \Delta_P$ we deduce that $\mathrm{Pl}_P$ is well-defined, that $\omega_{\beta}$ extends to a character on $M,$ and that \eqref{action} holds.  Since $V$ is affine the map $\mathrm{Pl}_P$ tautologically extends to a $M^{\mathrm{ab}} \times G$-equivariant map $\mathrm{Pl}:X_P \to V.$
 We are left with checking that $\mathrm{Pl}_P$ is a closed immersion and that \eqref{omegaP} is an isomorphism.  

We first check that \eqref{omegaP} is an isomorphism.
 The group $M^{\mathrm{ab}}$ is isogenous to the center $Z_M$ of $M$ which is contained in the subgroup of $T$ on which all the $\beta \in \Delta_M=\Delta_G-\Delta_P$ vanish.  Thus $Z_M$ has at most dimension $|\Delta_P|$.
On the other hand the homomorphism $\GG^{\Delta_P}_m \to M^{\mathrm{ab}}$ given on points by $(x_\beta) \mapsto \prod_{\beta} \beta^\vee(x_\beta)$ is a section of $\omega_P$. 
Thus $\dim M^{\mathrm{ab}}=|\Delta_P|$ and $\omega_P$ is injective.  Using the usual contravariant equivalence of categories relating tori and their character groups we deduce that $\omega_P$ is also surjective.

To check that $\mathrm{Pl}_P$ is a closed immersion we first point out that we have a commutative diagram
\begin{equation} \label{Pl2}
\begin{tikzcd}
X_P^{\circ} \arrow[d,twoheadrightarrow] \arrow[r,"\mathrm{Pl}_P"] &V^\circ \arrow[d,twoheadrightarrow]\\
P \backslash G  \arrow[r,"\overline{\mathrm{Pl}}_P",hookrightarrow] & \prod_{\beta \in \Delta_P}\mathbb{P}V_{\beta}
\end{tikzcd}
\end{equation}
where $\overline{\mathrm{Pl}}_{P}$ is induced by $\mathrm{Pl}_P$.  We claim that $\overline{\mathrm{Pl}}_P$ is a closed immersion.  Since $P \backslash G$ is proper and $\prod_{\beta \in \Delta_P}\mathbb{P}V_{\beta}$ is separated the map $\overline{\mathrm{Pl}}_P$ has closed image.  It is an immersion provided that $P$ is the stabilizer of the image of $(v_{\beta})$ in $\prod_{\beta \in \Delta_P}\mathbb{P}V_{\beta}$ by
\cite[Proposition 7.17]{Milne:AGbook}. 
The stabilizer of the image of $(v_\beta)$ is 
 \begin{align} \label{other:para}
 \bigcap_{\beta \in \Delta_P}P_\beta.
 \end{align}
 But \eqref{other:para} is a parabolic subgroup containing $B$.  By considering the root groups contained in \eqref{other:para} we deduce that it  is $P$. This completes the proof of the claim.

Extend $\omega_P$ to a character of $P$ in the canonical manner.
Since the stabilizer of the image of $(v_\beta)$ in $\prod_{\beta \in \Delta_P}\mathbb{P}V_{\beta}$ is $P$ we deduce that the stabilizer of $(v_\beta)$ is contained in $P$ and hence equal to  $\ker (\omega_P:P \to \GG_m^{\Delta_P}).$  Since \eqref{omegaP} is an isomorphism
$\ker (\omega_P:P \to \GG_m^{\Delta_P})=P^{\mathrm{der}}.$ Thus the map $\mathrm{Pl}_P$ is an immersion of $X_P^{\circ}$ onto the orbit of $(v_\beta)$ \cite[Proposition 7.17]{Milne:AGbook}.

The left vertical arrow in \eqref{Pl2} is the geometric quotient by the action of $M^{\mathrm{ab}}$, and the right vertical arrow is the geometric quotient by $\GG_m^{\Delta_P}$.
In view of the equivariance property \eqref{action} and the fact that \eqref{omegaP} is an isomorphism,  we deduce that  $\mathrm{Pl}_P(X_P^\circ)$ is the inverse image of $\overline{\mathrm{Pl}}_P(P \backslash G)$ in $V^\circ.$  We have already proved that $\overline{\mathrm{Pl}}_P(P \backslash G)$ is closed, so we deduce that $\mathrm{Pl}_P(X_P^\circ)$ is closed.
\end{proof}

\begin{example}
Let $G=\SL_3$ and $P=B,$ the Borel of upper triangular matrices.    For a suitable ordering $\beta_1,\beta_2$ of the set of simple roots of $T$ in $B$ the representations $V_{\beta_1}$ and $V_{\beta_2}$ are just the standard representation $\GG_a^3$ and $\wedge^2 \GG_a^3.$  If we choose $(0,0,1)$ and $(0,1,0) \wedge (0,0,1)$ as our highest weight vectors then
$$
\mathrm{Pl}_B\begin{psmatrix} a \\ b\\ c \end{psmatrix} =\left(c,b \wedge c\right).
$$ 
Under the Pl\"ucker embedding the image of $X_B$ is the cone $C$ whose points in an $F$-algebra $R$ are given by 
$$
C(R)=\{(v_1,v_2) \in  R^3 \times \wedge^2 R^3: v_1\wedge v_2=0\}.
$$

\end{example}

\subsection{The $Y_{P}$ spaces} \label{ssec:YP}

We assume that we are in the situation of the introduction.  Thus $Y \subseteq G$ is a reduced subscheme whose stabilizer under the left action of $G$ contains a parabolic subgroup $P'> P.$  We assume \eqref{P'assump}, that is, $P$ is maximal in $P'.$ Moreover we let $P^*\leq P'$ be the unique parabolic subgroup with Levi subgroup $M$ that is not equal to $P.$
 There is a unique simple root $\beta_0$ in $\Delta_G-\Delta_M$ such that the root group attached to $-\beta_0$ is a subgroup of $P'$ but not $P.$
Let $M'$ be the unique Levi subgroup of $P'$ containing $M.$  The set of simple roots of $T$ in $M'$  with respect to the Borel $B$ is 
\begin{align} \label{beta0}
\Delta_{M'}=\{\beta_0\} \cup \Delta_M.
\end{align}
Since $G$ is simply connected, it follows from \cite[Proposition 4.3]{Borel:Tits:Compl} that the derived group $M'^{\mathrm{der}}$ is simply connected.
Hence it is a direct product of its simple factors.  There is a unique decomposition
\begin{align} \label{M:decomp}
M'^{\mathrm{der}}=M_{\beta_0} \times M^{\beta_0}
\end{align}
where $M_{\beta_0}$ is the unique simple factor containing the root group of $\beta_0.$  It then is the unique simple factor containing $\beta^\vee_0(\GG_m).$   Let $N_P$ denote the unipotent radical of $P.$  
\begin{lem} \label{lem:der}
The group $T_{\beta_0}:=T \cap M_{\beta_0}$ is a maximal torus of $M_{\beta_0}$ and $B \cap M_{\beta_0}$ is a Borel subgroup of $M_{\beta_0}$. The group $P \cap M_{\beta_0}$ is the unique maximal parabolic subgroup of $M_{\beta_0}$ containing $B \cap M_{\beta_0}$ that does not contain the root group attached to $-\beta_0,$ and $M \cap M_{\beta_0}$ is a Levi subgroup of $P \cap M_{\beta_0}.$  
We have 
\begin{align} \label{4Ids} \begin{split}
N_{P \cap M_{\beta_0}}&=N_{P} \cap M_{\beta_0},\\
M^{\mathrm{der}} \cap M_{\beta_0}&=(M \cap M_{\beta_0})^{\mathrm{der}},\\ P^{\mathrm{der}} \cap M_{\beta_0}&=
(P \cap M_{\beta_0})^{\mathrm{der}}, \textrm{ and }\\
P^{\mathrm{der}}&=(P^{\mathrm{der}} \cap M_{\beta_0}) M^{\beta_0}N_{P'}. \end{split}
\end{align}
\end{lem}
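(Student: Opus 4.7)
The plan is to reduce everything to root-theoretic computations using the structure theory of split reductive groups, exploiting the direct product decomposition $M'^{\mathrm{der}} = M_{\beta_0} \times M^{\beta_0}$. The key combinatorial observation is the following. Let $\Delta_0 \subseteq \Delta_{M'}$ denote the set of simple roots of $M_{\beta_0}$; this is the connected component of the Dynkin diagram of $\Delta_{M'} = \{\beta_0\} \cup \Delta_M$ containing $\beta_0$, and $M^{\beta_0}$ corresponds to the union of the remaining connected components. Since $\beta_0 \in \Delta_0$ but $\beta_0 \notin \Delta_M$, we have $\Delta_{M'} - \Delta_0 \subseteq \Delta_M$ and $\Delta_M \cap \Delta_0 = \Delta_0 - \{\beta_0\}$. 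A connected component of $\Delta_M$ must lie entirely in $\Delta_0$ or entirely in $\Delta_{M'} - \Delta_0$, because any path through $\Delta_M$ in the Dynkin diagram cannot cross between distinct connected components of $\Delta_{M'}$. Consequently the connected components of $\Delta_M$ inside $\Delta_0$ are exactly the connected components of $\Delta_0 - \{\beta_0\}$, and the connected components of $\Delta_M$ outside $\Delta_0$ exhaust the simple factors of $M^{\beta_0}$.

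The statements that $T_{\beta_0}$ is a maximal torus and $B \cap M_{\beta_0}$ is a Borel then follow by intersecting the maximal torus $T \cap M'^{\mathrm{der}}$ and the Borel $B \cap M'^{\mathrm{der}}$ of $M'^{\mathrm{der}}$ with the direct factor $M_{\beta_0}$, since maximal tori and Borel subgroups of a direct product decompose compatibly. For the parabolic description, $P \cap M'$ is the standard parabolic of $M'$ with Levi $M$, attached to $\Delta_M \subseteq \Delta_{M'}$; intersecting with $M'^{\mathrm{der}}$ and using $\Delta_{M'} - \Delta_0 \subseteq \Delta_M$ (so that $P \cap M^{\beta_0} = M^{\beta_0}$) realises $P \cap M_{\beta_0}$ as the standard parabolic of $M_{\beta_0}$ whose Levi is indexed by $\Delta_0 - \{\beta_0\}$, the unique maximal parabolic omitting the root group of $-\beta_0$. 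The statement that $M \cap M_{\beta_0}$ is a Levi of $P \cap M_{\beta_0}$ follows from the same product decomposition applied to $M \cap M'^{\mathrm{der}} = (M \cap M_{\beta_0}) \times M^{\beta_0}$, using that $M^{\beta_0} \subseteq M$.

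The four identities all rest on one further technical observation: every positive root of $M'$ not contained in $M$ involves $\beta_0$ with nonzero coefficient, and hence lies in the root subsystem generated by $\Delta_0$. Thus $N_{P \cap M'} \subseteq M_{\beta_0}$, giving $N_{P \cap M'} = N_{P \cap M_{\beta_0}}$. Combined with the Levi-type factorisation $N_P = N_{P \cap M'} N_{P'}$, the facts $M' \cap N_{P'} = 1$ and $M_{\beta_0} \cap M^{\beta_0} = 1$, and the commutation of $M^{\beta_0}$ with $M_{\beta_0}$, any element of $N_P$ (resp.~of $M^{\mathrm{der}} N_P$) can be sliced uniquely into components living in $M_{\beta_0}$, $M^{\beta_0}$, and $N_{P'}$, and asking that the product lie in $M_{\beta_0}$ kills the $M^{\beta_0}$ and $N_{P'}$ pieces. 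This yields the first and third identities directly. The second identity $M^{\mathrm{der}} \cap M_{\beta_0} = (M \cap M_{\beta_0})^{\mathrm{der}}$ reduces to matching simple factors: since $G$ is simply connected both sides are direct products of simply connected simple groups, and by the combinatorial observation of the first paragraph both products are indexed by the connected components of $\Delta_0 - \{\beta_0\}$. The fourth identity then follows by substituting $N_P = N_{P \cap M_{\beta_0}} N_{P'}$ and $M^{\mathrm{der}} = (M^{\mathrm{der}} \cap M_{\beta_0}) M^{\beta_0}$ into $P^{\mathrm{der}} = M^{\mathrm{der}} N_P$ and reordering using that $M^{\beta_0}$ commutes with $M_{\beta_0}$.

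The main obstacle is not conceptual but bookkeeping: one must keep careful track of which positive roots of $G$ land in $N_{P'}$, in $N_{P \cap M_{\beta_0}}$, in $M^{\beta_0}$, or in $M \cap M_{\beta_0}$, and repeatedly verify that intersection with $M_{\beta_0}$ really cancels the $M^{\beta_0}$ and $N_{P'}$ contributions. Once the combinatorial lemma on connected components of $\Delta_M$ versus those of $\Delta_{M'}$ is in hand, every remaining step is a standard manipulation of the Levi decomposition of a parabolic subgroup.
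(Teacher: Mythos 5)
Your proposal is correct, but it reaches the four identities by a somewhat different route than the paper. Both arguments share the same skeleton — the decomposition $M'^{\mathrm{der}}=M_{\beta_0}\times M^{\beta_0}$, the inclusion $M^{\beta_0}\leq M$, and the factorization $N_P=N_{P\cap M'}N_{P'}$ — but where you work root-theoretically throughout, the paper argues more abstractly. For $N_{P\cap M_{\beta_0}}=N_P\cap M_{\beta_0}$ the paper does not invoke the Dynkin-diagram observation that every positive root of $M'$ outside $M$ is supported on the component of $\beta_0$; instead it notes that $N_{P\cap M_{\beta_0}}$, being the maximal connected normal unipotent subgroup of $P\cap M_{\beta_0}$, lies in $N_P$, and gets the reverse inclusion by observing that $N_P\cap M_{\beta_0}$ maps to the identity under the Levi quotient $P\cap M_{\beta_0}\to M\cap M_{\beta_0}$. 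For the second identity the paper writes $M=Z_{M'}\bigl(Z_{M\cap M_{\beta_0}}(M\cap M_{\beta_0})^{\mathrm{der}}\times M^{\beta_0}\bigr)$ and takes derived groups of both sides, rather than matching simple factors against connected components of $\Delta_0-\{\beta_0\}$ as you do. The third and fourth identities are then extracted from the semidirect product decomposition \eqref{semidir}, essentially as in your "slicing" step. Your approach buys explicitness — in particular it makes transparent why $N_{P\cap M'}\subseteq M_{\beta_0}$ and why the simple factors of $M^{\mathrm{der}}$ split cleanly between $M_{\beta_0}$ and $M^{\beta_0}$ — at the cost of the combinatorial bookkeeping you acknowledge; the paper's version is shorter but leaves more of that structure implicit (and cites SGA3 for the opening statements about $T\cap M_{\beta_0}$, $B\cap M_{\beta_0}$, and $P\cap M_{\beta_0}$, which you instead derive from the product decomposition). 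Both proofs are sound.
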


\begin{proof} The first two sentences follow from  \cite[Propositions 1.19 and 1.20, \S XXVI]{SGA3}. 

Since $N_{P \cap M_{\beta_0}}$ is the maximal connected normal unipotent subgroup of $P \cap M_{\beta_0}$ it is clearly contained in $N_P,$ the maximal connected normal unipotent subgroup of $P.$  Thus $N_{P \cap M_{\beta_0}} \leq N_P \cap M_{\beta_0}.$  On the other the image of $N_P \cap M_{\beta_0} <(M \cap M_{\beta_0})N_{P \cap M_{\beta_0}}=P \cap M_{\beta_0}$ under the canonical quotient map $P \cap M_{\beta} \to M \cap M_{\beta_0}$ is the identity.  It follows that $N_P \cap M_{\beta_0} \leq N_{P \cap M_{\beta_0}}.$

Let $Z_H$ denote the center of an algebraic group $H.$  Since $Z_{M'} \leq M$ and $M^{\beta_0}\leq M$ we have 
\begin{align} \label{before:der}
M=Z_{M'}(M \cap M_{\beta_0} \times M^{\beta_0})=Z_{M'}(Z_{M \cap M_{\beta_0}}(M \cap M_{\beta_0})^{\mathrm{der}} \times M^{\beta_0}).
\end{align}
Thus $M^{\beta_0}\leq M^{\mathrm{der}}$ and hence
\begin{align} \label{dir:prod}
    M^{\mathrm{der}}=(M^{\mathrm{der}} \cap M_{\beta_0}) \times M^{\beta_0}.
\end{align}
Taking derived groups of both sides of \eqref{before:der} we deduce that 
and $(M \cap M_{\beta_0})^{\mathrm{der}}=M^{\mathrm{der}} \cap M_{\beta_0}.$

We have 
\begin{align} \label{unip}
    N_P=N_{P \cap M'} N_{P'}=N_{P \cap M_{\beta_0}}N_{P'}
\end{align}
so 
\begin{align} \label{semidir}
P^{\mathrm{der}}=M^{\mathrm{der}}N_P= (M \cap M_{\beta_0})^{\mathrm{der}}N_{P \cap M_{\beta_0}} \ltimes M^{\beta_0}N_{P'}.
\end{align}
Thus
\begin{align} \label{Pder}
    P^{\mathrm{der}}\cap M_{\beta_0}=(M \cap M_{\beta_0})^{\mathrm{der}}N_{P \cap M_{\beta_0}} \leq (P \cap M_{\beta_0})^{\mathrm{der}}
\end{align}
Since $(P \cap M_{\beta_0})^{\mathrm{der}} \leq P^{\mathrm{der}} \cap M_{\beta_0}^{\mathrm{der}}=P^{\mathrm{der}} \cap M_{\beta_0}$ we deduce $(P \cap M_{\beta_0})^{\mathrm{der}} = P^{\mathrm{der}} \cap M_{\beta_0}.$  Combining \eqref{semidir} and \eqref{Pder} we also deduce $P^{\mathrm{der}}=(P^{\mathrm{der}} \cap M_{\beta_0})M^{\beta_0}N_{P'}.$
\end{proof}

Let
$$
\mathrm{pr}:G \lto X_P^\circ
$$
be the natural map.  In the following it is convenient to denote by $|X|$ the underlying topological space of a scheme $X.$  As usual, a subset of a topological space is locally closed if it is open in its closure. 

It is helpful to recall that if $X$ is a scheme then $Z \mapsto |Z|$ defines a bijection between the set of reduced subschemes of $X$ and the set of locally closed subsets of $|X|$ \cite[Proposition 3.52]{Gortz_Wedhorn}.
In particular, by definition, the schematic closure of a reduced subscheme $Z \subseteq X$ is the unique reduced subscheme $\overline{Z} \subseteq X$ satisfying $|\overline{Z}|=\overline{|Z|}.$

\begin{lem} \label{lem:locally:closed}
The set  $\mathrm{pr}(|Y|) \subset |X_P^\circ|$ is locally closed.
\end{lem}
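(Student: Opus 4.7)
\bigskip

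\noindent\textbf{Proof plan.} My approach is to exploit two structural facts: first, the quotient map $\mathrm{pr}: G \to X_P^\circ = P^{\mathrm{der}} \backslash G$ is smooth, hence open on underlying topological spaces; second, $Y$ is stable under the left action of $P'$ and thus, a fortiori, under the left action of $P^{\mathrm{der}}$ (since $P^{\mathrm{der}} \leq P \leq P'$). Combined, these will allow me to write $\mathrm{pr}(|Y|)$ as the intersection of an open set and a closed set in $|X_P^\circ|$.

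The first step is to pass to a suitable decomposition of $Y$ into $P^{\mathrm{der}}$-invariant pieces. Since $Y$ is locally closed in $G$, its underlying topological space may be written as $|Y| = |\bar{Y}| \cap U$ where $\bar{Y}$ is the scheme-theoretic closure of $Y$ in $G$ and $U \subseteq |G|$ is open. The closure $\bar{Y}$ is again $P^{\mathrm{der}}$-stable, because left multiplication by any point of $P^{\mathrm{der}}$ is a homeomorphism of $|G|$ and preserves $|Y|$, and therefore preserves the closure. To make the open piece $P^{\mathrm{der}}$-invariant as well, I replace $U$ by $V := P^{\mathrm{der}} \cdot U$, which is open as a union of translates of $U$ under the continuous action of $P^{\mathrm{der}}$. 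A short verification using the $P^{\mathrm{der}}$-invariance of both $Y$ and $\bar{Y}$ shows that $|Y| = |\bar{Y}| \cap V$: the inclusion $\supseteq$ goes by noting that any $y \in |\bar{Y}| \cap V$ has the form $p \cdot u$ with $u \in U \cap |\bar{Y}| = |Y|$, and $P^{\mathrm{der}}$-invariance of $|Y|$ then forces $y \in |Y|$.

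For the final step, I apply $\mathrm{pr}$. Because $\mathrm{pr}$ is open, $\mathrm{pr}(V)$ is open in $|X_P^\circ|$. Because $|\bar{Y}|$ is closed and $P^{\mathrm{der}}$-stable, its complement $|G| \setminus |\bar{Y}|$ is open and $P^{\mathrm{der}}$-stable, so $\mathrm{pr}(|G| \setminus |\bar{Y}|)$ is open; moreover since every $P^{\mathrm{der}}$-orbit lies entirely inside or outside $|\bar{Y}|$, this image is exactly the complement of $\mathrm{pr}(|\bar{Y}|)$, showing $\mathrm{pr}(|\bar{Y}|)$ is closed. The same saturation property of $|\bar{Y}|$ and $V$ yields $\mathrm{pr}(|\bar{Y}| \cap V) = \mathrm{pr}(|\bar{Y}|) \cap \mathrm{pr}(V)$, giving
\[
\mathrm{pr}(|Y|) = \mathrm{pr}(|\bar{Y}|) \cap \mathrm{pr}(V),
\]
which is the intersection of a closed and an open set, hence locally closed.

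The only nontrivial ingredient is the openness of $\mathrm{pr}$, which I will justify by noting that $P^{\mathrm{der}} = M^{\mathrm{der}} N_P$ is smooth and connected, so the fppf quotient $G \to P^{\mathrm{der}} \backslash G$ is a smooth morphism and therefore open. Everything else is a careful manipulation of $P^{\mathrm{der}}$-saturated sets; the main subtlety to watch is the verification that $\mathrm{pr}$ commutes with intersections of saturated subsets and sends saturated closed sets to closed sets, which is what converts the open-quotient fact into the desired conclusion.
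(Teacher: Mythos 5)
Your proof is correct and follows essentially the same route as the paper: both arguments rest on the facts that the closure $\bar{Y}$ is $P^{\mathrm{der}}$-stable (hence $|\bar{Y}|$ and $|Y|$ are saturated for $\mathrm{pr}$) and that $\mathrm{pr}$ sends saturated closed sets to closed sets and saturated open sets to open sets. The only cosmetic differences are that the paper obtains these mapping properties from $\mathrm{pr}$ being a geometric quotient (hence a topological quotient map, citing Raynaud) rather than from smoothness of the $P^{\mathrm{der}}$-torsor, and that it phrases the conclusion as ``$\mathrm{pr}(|Y|)$ is open in the closed set $\mathrm{pr}(|\bar{Y}|)$'' instead of your explicit intersection $\mathrm{pr}(|\bar{Y}|)\cap\mathrm{pr}(V)$.
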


\begin{proof}
 Let $\overline{Y}$ be the schematic closure of $Y$ in $G.$  Then $\overline{Y}$ is stable under left multiplication by $P^{\mathrm{der}}.$  The map $\mathrm{pr}$ is a geometric quotient \cite{Raynaud:Passage}.  In particular the underlying map of topological spaces is a quotient map.   It follows that  $\mathrm{pr}(|\overline{Y}|)$ is closed in $|X_{P}^\circ|$.
 The restriction of $\mathrm{pr}$ to $|\bar{Y}|$ is again a topological quotient map, so  $\mathrm{pr}(|Y|)$ is open in $\mathrm{pr}(|\overline{Y}|).$  We claim that the closure of  $\mathrm{pr}(|Y|)$ is $\mathrm{pr}(|\overline{Y}|).$  Indeed, if $\mathrm{pr}(|Y|)$ is contained in any closed set $Z,$ then $|Y| \subseteq \mathrm{pr}^{-1}(Z),$ which is closed, and hence $\overline{|Y|} \subseteq \mathrm{pr}^{-1}(Z).$  This  implies in turn that $\mathrm{pr}(\overline{|Y|}) \subseteq Z.$
\end{proof}

As in the introduction, $Y_{P}$ is the subscheme of $X_{P}^\circ$ with underlying topological space $\mathrm{pr}(|Y|),$ given the reduced induced subscheme structure. 

\begin{lem} \label{lem:CP:smooth}  If $Y=P'\gamma H$ for some subgroup $H \leq G$ and $\gamma \in G(F)$ then the subscheme $Y_{P} \subset X_P^{\circ}$ is smooth.
\end{lem}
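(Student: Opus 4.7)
My plan is to exhibit $Y_P$ as the smooth quotient of $Y$ by the action of $P^{\mathrm{der}}$ and invoke descent of smoothness along a smooth surjection. The first step is to observe that $Y = P'\gamma H$ is itself smooth over $F$: it is the orbit of $\gamma \in G$ under the action of the smooth algebraic group $P' \times H$ on $G$ given by $(p,h)\cdot g := pgh^{-1}$, and orbits of smooth group schemes are smooth.

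Next I would use the canonical quotient map $q: G \to X_P^\circ = P^{\mathrm{der}} \backslash G$, which is a right $P^{\mathrm{der}}$-torsor and in particular smooth and faithfully flat. The crucial geometric point is that $Y$ coincides with the scheme-theoretic preimage $q^{-1}(Y_P)$. Set-theoretically this is immediate: since $P^{\mathrm{der}} \leq P \leq P'$ one has $P^{\mathrm{der}} \cdot P'\gamma H = P'\gamma H$, so $|q^{-1}(Y_P)| = |Y|$. For the scheme structures, $Y_P$ carries its reduced induced structure, and smoothness of $q$ ensures that the pullback $q^{-1}(Y_P)$ is reduced; being two reduced locally closed subschemes of $G$ with the same underlying topological space, $Y$ and $q^{-1}(Y_P)$ must coincide.

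Consequently the restriction $q|_Y: Y \to Y_P$ is obtained by base-changing $q$ along the locally closed immersion $Y_P \hookrightarrow X_P^\circ$, and is therefore smooth and surjective. Applying the standard fact that smoothness descends along smooth surjective morphisms locally of finite type, the smoothness of $Y$ over $F$ forces the smoothness of $Y_P$ over $F$, as desired.

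The main obstacle I anticipate is the scheme-theoretic identification $q^{-1}(Y_P) = Y$; once this is set up correctly, the rest of the argument is essentially formal. A secondary subtlety is the smoothness of $Y$, which rests on $H$ being smooth — automatic in characteristic zero, and in any case a natural hypothesis in this setting.
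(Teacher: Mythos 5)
Your proof is correct and follows essentially the same route as the paper: the paper also deduces smoothness of $Y_P$ from smoothness of $Y=P'\gamma H$ by observing that, since $Y$ is left $P^{\mathrm{der}}$-invariant, the projection $G \to X_P^\circ$ restricts to a (locally trivial, hence smooth surjective) fibration $Y \to Y_P$. Your version merely replaces "locally trivial fibration" with "torsor pullback plus fppf descent of smoothness," and is in fact more careful than the paper about the scheme-theoretic identification $q^{-1}(Y_P)=Y$ and about the (implicit) smoothness hypothesis on $H$.
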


\begin{proof}
The space $Y$ is smooth, and $\mathrm{pr}$ is a locally trivial fibration.  Since $Y$ is left $P^{\mathrm{der}}$-invariant $\mathrm{pr}$ restricts to a locally trivial fibration $\mathrm{pr}:Y \to Y_{P}.$  
\end{proof}

Lemma \ref{lem:surj} implies the following lemma:
\begin{lem} \label{lem:surj:2}
The map 
$$
Y(F) \lto Y_{P}(F)
$$
is surjective. \qed
\end{lem}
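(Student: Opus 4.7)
The plan is to bootstrap directly from Lemma~\ref{lem:surj}, which supplies the surjectivity $G(F)\twoheadrightarrow X_P^{\circ}(F)$, together with the $P^{\mathrm{der}}$-invariance of $Y$. The latter is a consequence of the standing hypothesis that $Y$ is stable under the left action of $P'$, which contains $P$ and hence $P^{\mathrm{der}}$. So given $y\in Y_{P}(F)\subseteq X_P^{\circ}(F)$, I would first invoke Lemma~\ref{lem:surj} to produce some $g\in G(F)$ with $\mathrm{pr}(g)=y$.

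The next step is to argue that the closed point at which $g$ is supported lies set-theoretically in $|Y|$. By construction $Y_{P}\subseteq X_P^{\circ}$ carries the reduced induced subscheme structure on the locally closed subset $\mathrm{pr}(|Y|)$, and because $Y$ is $P^{\mathrm{der}}$-stable while $\mathrm{pr}\colon G\to X_P^{\circ}$ is the topological quotient by the left $P^{\mathrm{der}}$-action (the fact used in the proof of Lemma~\ref{lem:locally:closed}), one has the identity $\mathrm{pr}^{-1}(|Y_{P}|)=|Y|$. Since the image of $g$ maps to $y\in |Y_P|$, it must lie in $|Y|$.

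To finish I would use that $\mathrm{Spec}(F)$ is reduced: any morphism $\mathrm{Spec}(F)\to G$ whose topological image is contained in $|Y|$ factors uniquely through the reduced induced subscheme on $|Y|$, hence through $Y$ itself (via $Y_{\mathrm{red}}$, noting $Y(F)=Y_{\mathrm{red}}(F)$ since $F$ is a field). This exhibits $g$ as an element of $Y(F)$ lifting $y$, completing the surjectivity.

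The main (very modest) obstacle is just the verification of the set-theoretic identity $\mathrm{pr}^{-1}(|Y_{P}|)=|Y|$; but once one records that $\mathrm{pr}$ is a topological quotient map under the $P^{\mathrm{der}}$-action and that $Y$ is $P^{\mathrm{der}}$-stable, this is immediate. Thus the lemma is essentially a formal corollary of Lemma~\ref{lem:surj} combined with the construction of $Y_{P}$.
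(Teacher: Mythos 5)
Your argument is correct and is exactly the intended one: the paper gives no proof at all, simply asserting that Lemma \ref{lem:surj} implies the statement, and your writeup is the standard spelling-out of that implication (lift a point of $Y_{P}(F)$ to $G(F)$ via Lemma \ref{lem:surj}, observe it lands in $|Y|$ by $P^{\mathrm{der}}$-stability, and conclude it defines an $F$-point of $Y$ since $\mathrm{Spec}(F)$ is reduced). No discrepancy with the paper's approach.
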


Using notation from \eqref{betak} and \eqref{VXi}
let
\begin{align} \label{partial:closures} \begin{split}
    X_{P,P'}:&=\mathrm{Pl}^{-1}_P(V_{\beta_0} \times V_{\Delta_{P'}}^{\circ}) \subseteq X_P\\
    X_{P^*,P'}:&=\mathrm{Pl}^{-1}_{P^*}(V_{\beta_0^{-1}} \times V_{\Delta_{P'}}^{\circ}) \subseteq X_{P^*}.  \end{split}
\end{align}
Thus $X_{P,P'}$ and $X_{P^*,P'}$ are subschemes of $X_{P}$ and $X_{P^*},$ respectively.  Let
\begin{align} \label{partial:closure}\begin{split}
    Y_{P,P'}:&=\overline{Y_{P}} \subseteq X_{P,P'}\\
    Y_{P^*,P'}:&=\overline{Y_{P^*}} \subseteq X_{P^*,P'}\end{split}
\end{align}
be the closures of $Y_{P}$ in $X_{P,P'}$ and $Y_{P^*}$ in $X_{P^*,P'},$ respectively.

Using \eqref{unip} we see that the natural map 
\begin{align} \label{unip:incl}
N_{P \cap M_{\beta_0}} \lto N_P/N_{P'}
\end{align}
is an isomorphism.
For all $y \in Y(F)$ we have a morphism 
\begin{align} \label{iotay0}
\iota_y:X_{P \cap M_{\beta_0}}^\circ \lto Y_{P}
\end{align}
characterized by the requirement that the diagram
\begin{equation} \label{iotay}
\begin{tikzcd}
M_{\beta_0} \arrow[r] \arrow[d,twoheadrightarrow] &  Y \arrow[d,twoheadrightarrow]\\
X^{\circ}_{P \cap M_{\beta_0}} \arrow[r,"\iota_y"] &Y_{P}
\end{tikzcd}
\end{equation}
commutes, where the top arrow is given on points by $m \mapsto my$ and the vertical arrows are the canonical surjections.

\begin{lem} \label{lem:cosets}
Let $\Xi \subset Y(F)$ be a set of representatives for $P'^{\mathrm{der}}(F) \backslash Y(F).$
One has
\begin{align*}
    Y_{P}(F)=\coprod_{y \in \Xi} \iota_{y}(X^\circ_{P \cap M_{\beta_0}}(F))\quad \textrm{and} \quad
     Y_{P^*}(F)=\coprod_{y \in \Xi} \iota_{y}(X^\circ_{P^* \cap M_{\beta_0}}(F)).
\end{align*}
\end{lem}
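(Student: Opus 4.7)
The plan is to identify $Y_P(F)$ with the set-theoretic quotient $P^{\mathrm{der}}(F) \backslash Y(F)$, decompose it using $\Xi$, and then match each piece with $X_{P \cap M_{\beta_0}}^\circ(F)$ via a coset-representative argument that is forced to agree with $\iota_y$.

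First, I would apply Lemma \ref{lem:surj:2} to see that $Y(F) \to Y_P(F)$ is surjective, and note that by the construction of $Y_P$ as the image of $Y$ in $X_P^\circ = P^{\mathrm{der}} \backslash G$, the fibers over $F$-points are exactly the orbits of $P^{\mathrm{der}}(F)$ acting on $Y(F)$ by left multiplication. Hence $Y_P(F) = P^{\mathrm{der}}(F) \backslash Y(F)$. Since $Y$ is left $P'$-stable and $P'^{\mathrm{der}}(F) \leq P'(F)$, the partition $Y(F) = \coprod_{y \in \Xi} P'^{\mathrm{der}}(F) y$ induces
$$
Y_P(F) = \coprod_{y \in \Xi} P^{\mathrm{der}}(F) \backslash P'^{\mathrm{der}}(F)y.
$$

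The core step is to identify each $P^{\mathrm{der}}(F) \backslash P'^{\mathrm{der}}(F) y$ with $X_{P \cap M_{\beta_0}}^\circ(F)$ via $\iota_y$. Here I would use the Levi decomposition $P'^{\mathrm{der}} = (M_{\beta_0} \times M^{\beta_0}) \ltimes N_{P'}$ (which holds at $F$-points because $M'^{\mathrm{der}} = M_{\beta_0} \times M^{\beta_0}$ is split and $N_{P'}$ is unipotent) combined with the decomposition $P^{\mathrm{der}} = (P \cap M_{\beta_0})^{\mathrm{der}} M^{\beta_0} N_{P'}$ from Lemma \ref{lem:der}. Any $p \in P'^{\mathrm{der}}(F)$ factors as $m_0 m^0 n$ with $m_0 \in M_{\beta_0}(F)$, $m^0 \in M^{\beta_0}(F)$, $n \in N_{P'}(F)$, and since $m^0 n \in P^{\mathrm{der}}(F)$, we have $P^{\mathrm{der}}(F) p y = P^{\mathrm{der}}(F) m_0 y$; thus every coset has a representative of the form $m_0 y$ with $m_0 \in M_{\beta_0}(F)$. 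For injectivity, if $m, m' \in M_{\beta_0}(F)$ satisfy $P^{\mathrm{der}}(F) m y = P^{\mathrm{der}}(F) m' y$, then $m' m^{-1} \in M_{\beta_0}(F) \cap P^{\mathrm{der}}(F) = (P \cap M_{\beta_0})^{\mathrm{der}}(F)$ by Lemma \ref{lem:der}. Applying Lemma \ref{lem:surj} to $M_{\beta_0}$ identifies $(P \cap M_{\beta_0})^{\mathrm{der}}(F) \backslash M_{\beta_0}(F)$ with $X_{P \cap M_{\beta_0}}^\circ(F)$, and the resulting bijection sends $(P \cap M_{\beta_0})^{\mathrm{der}}(F) m \mapsto P^{\mathrm{der}}(F) m y$, which is exactly $\iota_y$ on $F$-points by the defining diagram \eqref{iotay}.

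For the $P^*$ half I would rerun the same argument: because $P^*$ has Levi $M$ and lies in $P'$, the intersection $P^* \cap M_{\beta_0}$ is a maximal parabolic of $M_{\beta_0}$ containing $B \cap M_{\beta_0}$, and the proof of Lemma \ref{lem:der} applies verbatim to yield $(P^*)^{\mathrm{der}} = (P^* \cap M_{\beta_0})^{\mathrm{der}} M^{\beta_0} N_{P'}$ together with $(P^*)^{\mathrm{der}} \cap M_{\beta_0} = (P^* \cap M_{\beta_0})^{\mathrm{der}}$; the rest of the argument is identical. The main obstacle I anticipate is the middle step: I must be careful that the group-theoretic manipulations with left cosets are carried out at the level of $F$-points rather than in the abstract group schemes, and that the bijection produced genuinely coincides with $\iota_y$ rather than some twist. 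Both are handled by invoking Lemma \ref{lem:surj} for the relevant split semisimple groups and reading off the formula from the commutative diagram \eqref{iotay}.
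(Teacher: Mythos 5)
Your proof is correct and follows essentially the same route as the paper's: both rest on the decomposition $P'^{\mathrm{der}}=M_{\beta_0}M^{\beta_0}N_{P'}$, the identities $P^{\mathrm{der}}=(P^{\mathrm{der}}\cap M_{\beta_0})M^{\beta_0}N_{P'}$ and $P^{\mathrm{der}}\cap M_{\beta_0}=(P\cap M_{\beta_0})^{\mathrm{der}}$ from Lemma \ref{lem:der}, and the surjectivity on $F$-points from Lemmas \ref{lem:surj} and \ref{lem:surj:2}, with the $P^*$ case by symmetry. The only cosmetic difference is that the paper routes through the intermediate quotient $M^{\beta_0}(F)N_{P'}(F)\backslash Y(F)$ and phrases the fibers as torsors, whereas you decompose $Y_P(F)$ directly into the pieces $P^{\mathrm{der}}(F)\backslash P'^{\mathrm{der}}(F)y$; this is the same argument.
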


\begin{proof}
With notation as in \eqref{M:decomp} we have $M_{\beta_0}M^{\beta_0}N_{P'}=P'^{\mathrm{der}}.$
The fibers of the canonical projection 
$M^{\beta_0}(F)N_{P'}(F) \backslash Y(F) \to P'^{\mathrm{der}}(F) \backslash Y(F)$ are $M_{\beta_0}(F)$-torsors, so
$$
M_{\beta_0}(F)\Xi
$$
 is a set of representatives for $M^{\beta_0}(F)N_{P'}(F) \backslash Y(F).$  Moreover no two elements of $\Xi$ are in the same $M_{\beta_0}(F)$-orbit.  
 By Lemma \ref{lem:surj:2}
 $P^{\mathrm{der}}(F) \backslash Y(F)=Y_{P}(F).$
By Lemma \ref{lem:der} the 
 fibers of the natural projection
 $$
 M^{\beta_0}(F)N_{P'}(F) \backslash Y(F) \lto P^{\mathrm{der}}(F) \backslash Y(F)=Y_{P}(F)
 $$
are $(P^{\mathrm{der}} \cap M_{\beta_0})(F)$-torsors.  Again by Lemma \ref{lem:der} we have $P^{\mathrm{der}} \cap M_{\beta_0}=(P \cap M_{\beta_0})^{\mathrm{der}}$ and we deduce the first equality of the lemma.  

To obtain the second equality we replace $P$ by $P^*$ and argue by symmetry.
\end{proof}

\subsection{Examples} \label{sec:example} Let $G=\mathrm{SL}_n,$ $n>2.$   We generalize the example of \eqref{Bruhat:decomp} to higher rank in two different manners. Let $\sigma_{j} \in \SL_n(F),$ $1 \leq j \leq n-1,$ be the matrix that is the identity matrix with the rows $e_j$ and $e_{j+1}$ replaced by $-e_{j+1}$ and $e_j.$  Let
$$
\gamma_1=\sigma_{1}\sigma_{2}\cdots \sigma_{n-1}.
$$
This is a Coxeter element.  Let $R$ be an $F$-algebra.
Let $B_n \leq \SL_n$ be the Borel subgroup of upper triangular matrices and let
\begin{align*}
    P'(R):&=\{\begin{psmatrix} g & x \\ & b \end{psmatrix} \in \SL_n(R): (g,b) \in \GL_2(R) \times  B_{n-2}(R)\}\\
    H(R):&=\{\begin{psmatrix} b & x \\ & g \end{psmatrix} \in \SL_n(R): (g,b) \in \GL_2(R) \times  B_{n-2}(R)\}
\end{align*}
Then $B_n \gamma_1B_n \subseteq P'\gamma_1 H \subseteq \overline{B_n\gamma_1 B_n}$, and
\begin{align}
    \overline{B_n\gamma_1B_n}(R)=\left\{g \in \SL_n(R): g_{ij}=0 \textrm{ if }i>j+1 \right\}.
\end{align} 
If we take $P=B_n$, then $P$ is maximal in $P'$ and 
\begin{align}
    B_n^*(R):=\left\{\begin{psmatrix} a &  & x\\ b & c & z\\ & & d\end{psmatrix} \in \mathrm{SL}_n(R): a,c \in R^\times, d \in B_{n-2}(R) \right\}.
\end{align}

As another example, take
\begin{align}
   \gamma_2= \begin{psmatrix} & & &J \\ (-1)^\epsilon  & & & \\ &1  & &  \end{psmatrix}
\end{align}
where $J$ is the matrix with $1$'s on the antidiagonal and zeros elsewhere and $\epsilon \in \{0,1\}$ is chosen so the matrix has determinant $1$. 
Write
$$
P_{a,b}(R):=\left\{\begin{psmatrix} g & x \\ & h \end{psmatrix} \in \SL_n(R): (g,h) \in \GL_a(R) \times \GL_b(R) \right\}.
$$
Then
$$
B_n\gamma_2 B_n \subseteq P_{n-1,1}\gamma_2 P_{1,n-1} \subseteq \overline{B_n \gamma_2 B_n}
$$

Choose positive integers $a,b$ such that $a+b=n-1.$  Then we may take 
\begin{align*}
    P(R):=\left\{\begin{psmatrix} g_1 & x & y\\ & g_2 & z \\ & & c\end{psmatrix} \in \mathrm{SL}_n(R): (g_1,g_2,c) \in \GL_{a}(R) \times \GL_b(R) \times R^\times\right\}.
\end{align*}
In this case
\begin{align*}
    P^*(R):=\left\{\begin{psmatrix} g_1 &  & y\\ x & g_2 & z \\ & & c\end{psmatrix} \in \mathrm{SL}_n(R): (g_1,g_2,c) \in \GL_{a}(R) \times \GL_b(R) \times R^\times\right\}.
\end{align*}

\subsection{Function spaces}
Let $X$ be a quasi-projective scheme over a local field $F$. We denote by $C^0(X(F))$ the space of complex valued continuous functions on $X(F).$ Assume that $F$ is nonarchimedean.  In this case we denote by 
\begin{equation}\label{eqn: compact funtions notation}
    \mathcal{C}(X(F))
\end{equation}
the space of locally constant compactly supported functions on $X(F)$, also denoted by $C_c^\infty(X(F))$.  If $X$ is smooth, then we set 
$$
\mathcal{S}(X(F))=\mathcal{C}(X(F))=C_c^\infty(X(F)).
$$
For certain nonsmooth schemes $X$ we will define $\mathcal{S}(X(F)).$   

Now assume that $F$ is archimedean.  In this case we define $\mathcal{C}(X(F))$ as follows.  To contrast schemes with real algebraic varieties, we abuse notation and identify real algebraic varieties with their real points.  The set $X(F)=\mathrm{Res}_{F/\RR}X(\RR)$ is a real algebraic variety, and hence an affine real algebraic variety  
\cite[Proposition 3.2.10, Theorem 3.4.4]{BCR}.  In particular there is a closed embedding of real algebraic varieties
\begin{align}
    X(F) \hooklto \RR^n
\end{align}
for some $n$.  We define $\mathcal{C}(X(F))$ to be the space of restrictions of the usual Schwartz space $\mathcal{S}(\RR^n)$ to $X(F)$.  This space is independent of the choice of embedding and is naturally a Fr\'echet space \cite[\S 3]{Elazar:Shaviv}.    Thus $\mathcal{C}(X(F))\leq C^0(X(F)).$  We observe that if $X$ is not smooth then $C^\infty(X(F))$ is not defined when $F$ is archimedean.

If $X$ is smooth then we set 
$$
\mathcal{S}(X(F))=\mathcal{C}(X(F)) \leq C^\infty(X(F)).
$$
We have $\mathcal{S}(X(F)) \geq C_c^\infty(X(F))$ but the inclusion is strict in the archimedean case when $X(F)$ is noncompact.  We will also define $\mathcal{S}(X(F))$ for certain nonsmooth $X$.   

\subsection{Measures} \label{ssec:measures}

Let $F$ be a global field.  We fix a nontrivial character $\psi:F \backslash \A_F \to \CC^\times$ and a Haar measure $dx=\otimes_v' dx_v$ on $\A_F.$  We assume that $dx_v$  is self-dual with respect to $\psi_v.$  We let $d^\times x$ be the Haar measure on $\A_{F}^\times$ given by
$$
\otimes_v' \frac{\zeta_v(1)}{|x|_v}dx_v.
$$

 We fix, once and for all, a Chevalley basis of the Lie algebra of $G$ with respect to $T.$  For every root of $T$ in $G$ this provides us with a root vector $X_{\alpha}$ in each root space, and hence isomorphisms
$$
\GG_a \lto N_{\alpha}
$$
where $N_{\alpha}$ is the root group.  This in turn provides us with a Haar measure on $N_{\alpha}(\A_F)$ for all $\alpha.$  As a scheme (but not a group scheme) the unipotent radical of any parabolic subgroup $P$ with Levi subgroup $M$ is a product of various $N_{\alpha}.$  Thus we obtain a Haar measure on $N_{P}(\A_F).$  We use this normalization so that factorization of intertwining operators holds (otherwise it only holds up to a constant depending on the choice of Haar measures).

We fix a Haar measure on $M^{\mathrm{der}}(\A_F).$
 We give $M(\A_F)$ the unique Haar measure such that, upon endowing $M^{\mathrm{ab}}(\A_F)$ with the quotient measure, one has that
\begin{eqnarray}\label{is}
\omega_P:M^{\mathrm{ab}}(\A_F) \lto  (\A_F^\times)^{\Delta_P}
\end{eqnarray}
is measure preserving. This is independent of the parabolic $P$ with $M$ as its Levi, as different choices will just replace various $\omega_{\beta}$ with their inverses.  

Each of the measures we fixed above factors over the places of $F$ into a product of local measures, normalized so that the local analogue of \eqref{is} is measure preserving.  We use these local measures when working locally below.

\subsection{Quasi-characters} \label{ssec:q:char}
Our convention is that characters are unitary.  We refer to general continuous homomorphisms to $\CC^\times$ as quasi-characters. Let
$$
\chi:=\prod_{\beta \in \Delta_P}\chi_\beta: (A_{\GG_m} F^\times \backslash \A_F^\times)^{\Delta_P} \lto \CC^\times
$$
be a quasi-character.    If $s=(s_\beta) \in \CC^{\Delta_P}$ we let
$$
\chi_{s}=\prod_{\beta \in \Delta_P}\chi_{\beta}|\cdot|^{s_\beta}.
$$

We define a subgroup
\begin{align} \label{AGm}
    A_{\GG_m} \leq F_\infty^\times
\end{align}
in the following manner.  
The global field $F$ is a finite extension of $F_0,$ where $F_0=\QQ$ or $F_0=\mathbb{F}_p(t)$ for some prime $p.$  Let
$Z \leq \mathrm{Res}_{F/F_0}\GG_m$ be the maximal split subtorus.  When $F$ is a number field we take $A_{\GG_m}$ to be the neutral component of $Z(\RR).$  Thus $A_{\GG_m}$ is just $\RR_{>0}$ embedded diagonally in $F^\times_\infty.$   When $F$ is a function field we choose an isomorphism 
$$
Z \tilde{\lto} \GG_m
$$
 and let $A_{\GG_m}$ be the inverse image of $t^\ZZ.$

\subsection{The unramified setting} \label{ssec:unram}
For a nonarchimedean place $v$ of the global field $F$ let $\FF_v$ be the residue field of $F_v$ and $q_v:=|\FF_v|.$   Let $S$ be a 
 finite set of  places of $F$ including the infinite places.  Upon enlarging $S$ if necessary, we can choose a reductive group scheme $\mathcal{G}$ over $\OO_F^S$ (with connected fibers) and affine spaces
$\mathcal{V}_{\beta} \cong \GG_{a\OO_F^S}^{d_\beta}$ for some integer $d_\beta>0$ (isomorphism over $\OO_F^S$) equipped with homomorphisms
$$
\mathcal{G} \lto \mathrm{Aut}(\mathcal{V}_\beta)
$$
over $\OO_F^S$ whose generic fibers are the representations $G \to \mathrm{Aut}(V_\beta).$   By abuse of notation, write $G$ for $\mathcal{G}.$ For any of the subgroups $M,$ $P,$ $P',$ etc.~of $G_F$ we continue to use the same letter for their schematic closures in $G.$  Upon enlarging $S$ if necessary we may assume that these groups are all smooth.  We assume moreover that the groups whose generic fibers were reductive (resp.~parabolic, etc.)  extend to reductive group schemes (resp.~parabolic group schemes, etc.) over $\OO_F^S.$
We also assume that $\omega_{P}$ induces an isomorphism on $\OO_{F_v}$-points for all $v \not \in S,$ the highest weight vectors $v_\beta \in V(F)$ lie in $V_\beta(\OO_F^S),$ and their image in  $V_\beta(\FF_v)$ is again a highest weight vector for $G_{\FF_v}$ of weight $-\omega_\beta.$  Finally, we continue to denote by $Y$ the schematic closure of $Y$ in $G$.  It is a scheme over $\OO_F^S$, and the action of $P'_F$ on $Y_F$ extends to an action of $P'$ on $Y.$

Under the assumptions above (which are no loss of generality for $S$ large enough) and we are considering the local setting over $F_v$ for $v \not \in S$ we say that we are  \textbf{in the unramified setting}.

%%%%%%%%%%%%%%%%%%%%%%%%%%%%%%%%%%%%%%%%%%%%%%%%%%%%%%%%%%%%%%%%%%
\section{The Schwartz space of $Y_{P,P'}(F)$} \label{sec:Ss}

For all but the last subsection of this section $F$ is a local field.  

\subsection{Induced representations} \label{ssec:Ind}

Consider the induced representations:
\begin{align} \label{induced}
    I_{P}(\chi_s):=\mathrm{Ind}_{P}^{G}(\chi_s \circ \omega_P) \quad \textrm{and} \quad 
    I^*_{P^*}(\chi_s):=\mathrm{Ind}_{P^*}^{G}(\chi_s \circ \omega_P).
\end{align}
By convention, sections of these induced representations are smooth functions on $G(F).$
Let $\Phi^{\chi_s}$ be a section of $I_{P}(\chi_s).$
Assume $F$ is archimedean. We say $\Phi^{\chi_s}$  is holomorphic (resp.~meromorphic) if $s \mapsto \Phi^{\chi_s}(g)$ is holomorphic as a function of $s \in \CC^{\Delta_P}$ for all $g \in G(F)$ and characters $\chi:(F^\times)^{\Delta_P} \to \CC^\times.$  If $F$ is  nonarchimedean with residue field of cardinality $q$ we say that  $\Phi^{\chi_s}$ is holomorphic if $\Phi^{\chi_s}(g) \in \CC[\{q^{s_\beta},q^{-s_\beta}:\beta \in \Delta_P\}]$ for all $g \in G(F)$ and characters $\chi:(F^\times)^{\Delta_P} \to \CC^\times.$  Similarly we say it is meromorphic if for all characters $\chi:(F^\times)^{\Delta_P} \to \CC^\times$ there is a $p_{\chi} \in \CC[\{q^{s_\beta},q^{-s_\beta}:\beta \in \Delta_P\}]$ such that 
$s \mapsto p_{\chi}(s)\Phi^{\chi_s}(g)$ is holomorphic for all $g \in G(F).$  Fix a maximal compact subgroup $K \leq G(F)$ such that the Iwasawa decomposition holds: $G(F)=P(F)K.$  We then say that $\Phi^{\chi_s}$ is \textbf{standard} if the restriction of the function $(s,g) \mapsto \Phi^{\chi_s}(g)$ to $\CC^{\Delta_P} \times K$ is independent of $s.$ 
We take the analogous conventions regarding sections of $I_{P^*}^*(\chi_s).$  Standard sections are allowed to be nontrivial on $K;$ in particular standard sections may be ramified.

Let $\mathcal{E}$ denote the ring of entire functions on $\CC^{\Delta_P}$ when $F$ is archimedean and $\CC[\{q^{s_\beta},q^{-s_\beta}:\beta \in \Delta_P\}]$ when $F$ is nonarchimedean.
For a fixed character $\chi:(F^\times)^{\Delta_P} \to \CC^\times$ there is an obvious action of $\mathcal{E}$ on the $\CC$-vector space of holomorphic sections of $I_P(\chi_s)$ preserving the subspace of $K$-finite sections.  As an $\mathcal{E}$-module, the subspace of holomorphic $K$-finite sections is generated by the subspace of standard $K$-finite sections. 
This allows us to apply results in the literature stated for $K$-finite standard sections to sections that are $K$-finite and merely holomorphic.  We will use this observation without further comment below.

\subsection{The Schwartz space} \label{sec:twist}

We define $P,P^*$ and $P'$ as in the introduction.  Thus $P \cap P^*$ is the Levi subgroup $M$ of the parabolic subgroups $P$ and $P^*.$  Moreover $P$ and $P^*$ are maximal (proper) parabolic subgroups of $P'.$  
To define the Fourier transform $\mathcal{F}_{P|P^*}$ we first apply an intertwining operator to certain functions on $Y_{P}(F)$ to arrive at functions on $Y_{P^*}(F).$  We then twist by certain operators that we recall in this section.  These operators were first introduced in \cite{BK:normalized}.

Suppose we are given $\lambda \in X_*(M^{\mathrm{ab}}).$ Let $s' \in \CC.$ We define
\begin{align}
    \lambda_{!}(s'):\mathcal{C}(Y_{P^*}(F)) \lto C^0(Y_{P^*}(F))
\end{align}
by 
\begin{align*}
    \lambda_{!}(s')(f)(x)=\int_{F^\times}\delta_{P^*}^{1/2}(\lambda(a))f(\lambda(a^{-1})x)\psi(a)|a|^{s'}da.
\end{align*}
Here $\psi:F \to \CC^\times$ is the local factor of the global additive character fixed in \S \ref{ssec:measures} and $da$ is the Haar measure on $F.$ 
In the special case where $P$ is maximal and $P'=Y=G$ this reduces to \cite[(4.2)]{Getz:Hsu:Leslie}, where it was denoted by $\lambda_{!}(\mu_s).$  The same operator is denoted by $\lambda_!(\eta_{\psi}^{s'})$ in \cite{BK:normalized,Shahidi:FT}.  To extend the domain of definition of $\lambda_!(s'),$ let $\Phi \in \mathcal{S}(F)$ satisfy $\Phi(0)=1$ and $\widehat{\Phi} \in C_c^\infty(F).$  Here
$\widehat{\Phi}(x):=\int_{F}\Phi(y)\psi\left(xy \right)dy.$  Define the regularized integral
\begin{align}
  \lambda_!(s')^{\mathrm{reg}}(f)(x):=\lim_{|b| \to \infty}\int_{F^\times} \Phi\left(\frac{a}{b} \right)\delta_{P^*}^{1/2}(\lambda(a))f(\lambda(a^{-1})x)\psi(a)|a|^{s'}da.
\end{align}
This limit is said to be well-defined if the integral
$$
\int_{F^\times} { \Phi \left(\frac{a}{b} \right)}\delta_{P^*}^{1/2}(\lambda(a))f(\lambda(a^{-1})x)\psi(a)|a|^{s'}da
$$
is convergent provided $|b|$ is large enough and the limit in the definition of $\lambda_!(s')^{\mathrm{reg}}(x)$  exists and is independent of $\Phi.$  Thus if the integral defining $\lambda_!(s')(f)$ is absolutely convergent then $\lambda_!(s')^{\mathrm{reg}}(f)=\lambda_!(s')(f).$  In particular this is the case if $f \in \mathcal{C}(Y_{P^*}(F)).$  To avoid more proliferation of notation we will drop the superscript $\mathrm{reg}.$

Let $L(s',\chi)$,  $\varepsilon(s',\chi,\psi)$, and
$$
\gamma(s',\chi,\psi)=\frac{\varepsilon(s',\chi,\psi)L(1-s',\chi^{-1})}{L(s',\chi)}
$$
be the usual Tate local zeta function, $\varepsilon$-factor, and $\gamma$-factor
attached to a quasi-character $\chi:F^\times \to \CC^\times$ and a complex number $s' \in \CC$.

We use a hat to denote the dual group of an $F$-group (more precisely the neutral component of the Langlands dual).  
Let $\mathcal{N}$ be a $1$-dimensional representation of $Z_{\widehat{M}}=\widehat{M}^{\mathrm{ab}}$  with $s' \in \tfrac{1}{2}\ZZ$ attached to it.  The action of $Z_{\widehat{M}}$ is given by a character $\lambda:Z_{\widehat{M}} \to \GG_m,$ which we may identify with a cocharacter $\lambda:\GG_m \to M^{\mathrm{ab}}.$  Let 
\begin{align*} 
a_{\mathcal{N}}(\chi_s):=L\left(-s',\chi_s
\circ \lambda \right)\quad \textrm{and} \quad
\mu_{\mathcal{N}}(\chi_s):=\gamma(-s',\chi_s \circ \lambda,\psi).
\end{align*}
We let 
\begin{align}
\widetilde{\mathcal{N}}
\end{align}
be $\mathcal{N}^\vee$ (on which $\widehat{M}^{\mathrm{ab}}$ acts via $-\lambda$) with the real number $-1-s'$ attached to it.

More generally, assume $\mathcal{N}=\oplus_{i=1}^\ell\mathcal{N}_i$ is a finite-dimensional representation of $M^{\mathrm{ab}}$ with each $\mathcal{N}_i$ $1$-dimensional.  We let the  $Z_{\widehat{M}}$-action on $\mathcal{N}_i$ be given by $\lambda_i$ and assume each $\mathcal{N}_i$ is equipped with a complex number $s_i$.  Define
\begin{align} \label{agamma}
a_{\mathcal{N}}(\chi_s):&=\prod_{i=1}^\ell a_{\mathcal{N}_i}(\chi_s),\\
\label{muNchi}
 \mu_{\mathcal{N}}(\chi_s):&=\prod_{i=1}^\ell\mu_{\mathcal{N}_i}(\chi_s).
\end{align}
We also define $\widetilde{\mathcal{N}}:=\oplus_{i=1}^\ell \widetilde{\mathcal{N}}_i.$ 
We will in fact only consider $\mathcal{N}=\oplus_{i=1}^\ell \mathcal{N}_i$ where the parameters  attached to $\mathcal{N}_i$ are all of the form $(s_i,\lambda_i)$ where $\lambda_i$ is an integer multiple of $\beta_0^\vee$.  Here $\beta_0$ is the simple root of \eqref{beta0}.  We will therefore abuse notation and allow ourselves to again write $\lambda_i$ for the integer $n_i$ such that $\lambda_i=n_i\beta_0^\vee$. 
Assume that $\lambda_i>0$ for all $i$.  Then we can order the $\mathcal{N}_i$ so that 
$$
\frac{s_{i+1}}{\lambda_{i+1}} \geq \frac{s_i}{\lambda_i}
$$
for all $i$.
Choosing such an ordering we define
\begin{align}
\label{muN}
    \mu_{\mathcal{N}}:&=\lambda_{1!}(s_1)\circ \dots \circ \lambda_{\ell !}(s_\ell).
\end{align}
Theorem \ref{thm:FT} below explains why it is reasonable to use the symbol $\mu_{\mathcal{N}}$ in both \eqref{muNchi} and \eqref{muN}.

Let $\mathfrak{n}_P$ be the Lie algebra of the unipotent radical $N_P$ of $P$ and let $\widehat{\mathfrak{n}}_P$ be its Langlands dual.    Let
\begin{align} \label{npq}
\widehat{\mathfrak{n}}_{P|P^*} :=\widehat{\mathfrak{n}}_{P}/\widehat{\mathfrak{n}}_P \cap \widehat{\mathfrak{n}}_{P^*}.
\end{align}  
Let $\{e,h,f\} \subset \widehat{\mathfrak{m}}$ be a principal $\mathfrak{sl}_2$-triple (here $\widehat{\mathfrak{m}}$ denotes the Langlands dual of $\mathfrak{m},$ the Lie algebra of $\mathfrak{m}$).  Consider the subspace $\widehat{\mathfrak{n}}_{P|P^*}^e \leq \widehat{\mathfrak{n}}_{P|P^*}$ annihilated by $e.$  
It admits a decomposition
\begin{align} \label{BK:decomp}
\widehat{\mathfrak{n}}_{P|P^*}^e=\oplus_{i} \mathcal{N}_i
\end{align}
where the $\mathcal{N}_i$ are $1$-dimensional eigenspaces for the action of $Z_{\widehat{M}}.$

We observe that $Z_{\widehat{M}}$ acts via a power of $\beta_0^\vee$ on $\mathcal{N}_i.$ 
We assign each $\mathcal{N}_i$ the real number $s_i$ that is half the eigenvalue of $h,$ and define
\begin{align} \label{ap}
a_{P|P}(\chi_s)=a_{\widetilde{\widehat{\mathfrak{n}}_{P|P^*}^e}}((\chi_s)^{-1}), \quad 
a_{P|P^*}(\chi_s)=
a_{\widehat{\mathfrak{n}}_{P|P^*}^e}(\chi_s).
\end{align}
These factors enjoy the symmetry property \begin{align}
    a_{P|P}(\chi_s)=a_{P^*|P^*}(\chi_s), \quad a_{P|P^*}(\chi_s)=a_{P^*|P}(\chi_s)
\end{align}
by the discussion on passing to the opposite parabolic contained in \cite[\S 4.2]{Getz:Hsu:Leslie}.

\begin{lem} \label{lem:holo} 
There is an $\varepsilon>0$ depending only on $P$ and $P'$ such that for any character $\chi:(F^\times)^{\Delta_P} \to \CC^\times$ the function $a_{P|P}(\chi_s)$ is holomorphic and nonzero for 
$$
\mathrm{Re}(s_{\beta_0}) \geq -\varepsilon.
$$
\end{lem}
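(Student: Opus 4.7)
The plan is to reduce the lemma to the classical analytic properties of Tate local $L$-factors by unwinding the definitions in \eqref{ap} and \eqref{agamma}. Using the decomposition $\widehat{\mathfrak{n}}_{P|P^*}^e = \bigoplus_{i\in I}\mathcal{N}_i$ from \eqref{BK:decomp}, in which each line $\mathcal{N}_i$ is assigned the cocharacter $\lambda_i=n_i\beta_0^\vee$ of $M^{\mathrm{ab}}$ (with integer $n_i\geq 1$) together with the real number $s_i$ equal to half the $h$-eigenvalue, the definitions yield
\begin{align*}
a_{P|P}(\chi_s) = \prod_{i\in I}L(1+s_i,\chi_s\circ\lambda_i).
\end{align*}
The duality relation $\omega_\beta(\beta_0^\vee)=\delta_{\beta\beta_0}$ of \eqref{omegabeta} simplifies $\chi_s\circ\lambda_i = \chi_{\beta_0}^{n_i}|\cdot|^{n_is_{\beta_0}}$, and hence
\begin{align*}
a_{P|P}(\chi_s) = \prod_{i\in I}L\bigl(1+s_i+n_is_{\beta_0},\,\chi_{\beta_0}^{n_i}\bigr).
\end{align*}

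Next I would verify that $s_i\geq 0$ for every $i$. The subspace $\widehat{\mathfrak{n}}_{P|P^*}^e = \ker(e)$ is spanned by highest-weight vectors of the finite-dimensional $\mathfrak{sl}_2$-representation $\widehat{\mathfrak{n}}_{P|P^*}$, and highest weights there are nonnegative integers. The positive integers $n_i$ and the finite index set $I$ are purely combinatorial invariants of $(P,P')$, independent of $\chi$ and $s$.

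Because the character $\chi$ is unitary, each $\chi_{\beta_0}^{n_i}$ is a unitary quasi-character of $F^\times$. For any such $\psi$ the local Tate factor $L(t,\psi)$ is holomorphic and nonvanishing on the open half-plane $\mathrm{Re}(t)>0$: in the nonarchimedean case this is transparent from $L(t,\psi)=(1-\psi(\varpi)q^{-t})^{-1}$ (whose single pole lies on the imaginary axis since $|\psi(\varpi)|=1$) or from $L(t,\psi)=1$ when $\psi$ is ramified; in the archimedean case it follows from the location of the poles of the Gamma function together with the nonvanishing of $\Gamma$.

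Finally, I would set
\begin{align*}
\varepsilon := \tfrac{1}{2}\min_{i\in I}\frac{1+s_i}{n_i},
\end{align*}
which is strictly positive and depends only on the combinatorial data of $P$ and $P'$. For $\mathrm{Re}(s_{\beta_0})\geq -\varepsilon$ one has $\mathrm{Re}(1+s_i+n_is_{\beta_0})\geq \tfrac{1}{2}(1+s_i)>0$ for every $i$, and by the previous step every factor in the product is holomorphic and nonzero. The main obstacle is not analytic but bookkeeping: one must carefully track the sign flips built into the tilde construction of \eqref{agamma}, and one must recognize that ``character'' in the lemma means a unitary homomorphism rather than an arbitrary quasi-character—otherwise a twist by $|\cdot|^u$ with $u\in\RR$ would shift the poles of $L(\,\cdot\,,\chi_{\beta_0}^{n_i})$ arbitrarily along the real axis and no uniform $\varepsilon$ could exist.
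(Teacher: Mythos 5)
Your proposal is correct and follows essentially the same route as the paper: the paper's proof likewise reduces the claim to checking that $s_i \geq 0$ (via the $h$-eigenvalue on the highest-weight vectors killed by $e$) and $\lambda_i > 0$, leaving the unwinding of $a_{P|P}(\chi_s)=\prod_i L(1+s_i+n_i s_{\beta_0},\chi_{\beta_0}^{n_i})$ and the standard nonvanishing/holomorphy of Tate factors for unitary characters implicit. Your version just makes the bookkeeping and the explicit choice of $\varepsilon$ visible, which is consistent with how the paper uses $a_{P|P}$ elsewhere (e.g.\ in the proof of Proposition \ref{prop:basic:fixed}).
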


\begin{proof}
Consider the set of parameters $\{(s_i,\lambda_{i}\beta_0^\vee)\}$ attached to $\widehat{\mathfrak{n}}_{P|P^*}.$
It suffices to check that for each $i$ one has $s_i \geq 0$ and  $\lambda_{i} >0 .$  Since $s_i$ is $\tfrac{1}{2}$ the highest weight of a certain $\mathfrak{sl}_2$ representation with respect to the usual Borel subalgebra $\langle h,e\rangle$ it is nonnegative.  It is also clear that $\lambda_i>0.$
\end{proof}

As above, let $M'$ be the unique Levi subgroup of $P'$ containing $M$ and define $M_{\beta_0}$ as in \eqref{M:decomp}.  We have $N_{P^* \cap M_{\beta_0}}=N_{P^*} \cap M_{\beta_0}$ by Lemma \ref{lem:der} (in which we can replace $P$ by $P^*$ by symmetry) and this in turn is equal to $N_{P^*} \cap M'$.  On the other hand $N_{P^*} \cap N_P=N_{P'}$ and $(N_{P^*} \cap M')N_{P'}=N_{P^*}.$  It follows that the closed immersion $ N_{P^*  \cap M_{\beta_0}} \to N_{P^*}$ induces a bijection
$$
N_{P^* \cap M_{\beta_0}}(F)  \tilde{\lto} N_{P^*}(F) \cap N_{P}(F) \backslash N_{P^*}(F).
$$
Thus the usual unnormalized intertwining operator restricts to define an operator
\begin{align}
    \mathcal{R}_{P|P^*}:C_c^\infty(Y_{P}(F)) \lto C^0(Y_{P^*}(F))
\end{align}
given by 
\begin{align} \label{Radon}
\mathcal{R}_{P|P^*}(f)(g):=\int_{N_{P^* \cap M_{\beta_0}}(F) } f(ug)du=\int_{N_{P^* \cap M'}(F) } f(ug)du
\end{align}
Here we have used the fact that $N_{P^* \cap M_{\beta_0}}=N_{P^* \cap M'}.$
We will use the same notation whenever $\mathcal{R}_{P|P^*}(f)$ is defined (e.g.~for more general smooth functions or via analytic continuation).
In \cite[\S 4]{Shahidi:FT} Shahidi proves that this agrees with the operator defined by Braverman and Kazhdan.

A section $\Phi^{\chi_s}$ of $I_P(\chi_s)$ is
\textbf{good} if it is meromorphic and if the section
$$
\frac{\mathcal{R}_{P|Q}\Phi^{\chi_s}(g)}{a_{P|Q}(\chi_s)}
$$
is holomorphic for all $g \in G(F)$ and $Q \in \{P,P^*\}$ (recall our conventions regarding meromorphic sections from \S \ref{ssec:Ind}).

We defined adelic Mellin transforms in \eqref{Mellin} above.  We use the obvious local analogues of this notation.   
We write  $f_{\chi_s}$ for the Mellin transform of any function $f:Y_{P}(F) \to \CC$ or $f:X_P^\circ(F) \to \CC$ such that the integral defining the Mellin transform is absolutely convergent or obtained by analytic continuation from some region of absolute convergence.

\textbf{Assume $F$ is nonarchimedean}. Let $K \leq M^{\mathrm{ab}}(F) \times G(F)$ be a compact open subgroup.
Let $\mathcal{C}_{\beta_0}(X_P(F))$ be the space of $K$-finite $f \in C^\infty(X_{P}^\circ(F))$ such that for $\mathrm{Re}(s_{\beta_0})$ sufficiently large the integral defining the Mellin transform
$f_{\chi_s}$ converges absolutely and defines a good section.
We define the \textbf{Schwartz space} of $Y_{P,P'}(F)$ to be the space of restrictions to $Y_{P}(F)$ of functions in $\mathcal{C}_{\beta_0}(X_P(F))$:
\begin{align} \label{image}
\mathcal{S}(Y_{P,P'}(F))=\mathrm{Im}(\mathcal{C}_{\beta_0}(X_P(F)) \lto C^0(Y_{P}(F))).
\end{align}

\begin{rem}
We expect that the space $C_c^\infty(Y_{P}(F))$ of compactly supported and locally constant functions  on $Y_P(F)$ is contained in $\mathcal{S}(Y_{P,P'}(F)).$  However this is not obvious.  If $M_{\beta_0}$ is not of type $E$ or $F$ then it follows from the proof of \cite[Theorem 1.3]{Hsu} that $C_c^\infty(Y_P(F)) <\mathcal{S}(Y_{P,P'}(F)).$  We expect an analogous statement holds in the archimedean case, but it may be more difficult to prove.
One can also obtain alternate characterizations of $\mathcal{S}(Y_{P,P'}(F))$ from the results of loc.~cit.
\end{rem}

Write 
$K_{\GG_m}$ for the maximal compact subgroup of $F^\times$.  Say that two quasi-characters $\chi_1,\chi_2:F^\times \to \CC^\times$ are equivalent if $\chi_1=\chi_{2}|\cdot|^s$ for some $s \in \CC$.  Then the set of equivalence classes of quasi-characters of $F^\times$ is in natural bijection with $\widehat{K}_{\GG_m}$.  Thus we sometimes write $\widehat{K}_{\GG_m}$ for a set of representatives of the quasi-characters of $F^\times$ modulo equivalence.  In the archimedean setting we fix the following sets of representatives:
$$
\widehat{K}_{\GG_m}:=\begin{cases}\{1,\mathrm{sgn}\} &\textrm{ if }F=\RR \\
\{z \mapsto \left(\tfrac{z}{(\bar{z}z)^{1/2}}\right)^m:m \in \ZZ \} & \textrm{ if }F=\CC.\end{cases}
$$

For extended real numbers $A,B \in \{ -\infty\} \cup \RR \cup \{\infty\}$ with $A<B$ let
\begin{align} \label{VAB}
V_{A,B}:=\{s \in \CC^{\Delta_P}:A<\mathrm{Re}(s_\beta) <B \textrm{ for }\beta \in \Delta_P\}.
\end{align}
For functions $\phi:\CC^{\Delta_P} \to \CC$ and polynomials $p$ on $\CC^{\Delta_P}$ let
\begin{align} \label{normVAB}
|\phi|_{A,B,p}:=\sup_{s \in V_{A,B}}|\phi(s)p(s)|
\end{align}
(which may be infinite).  \textbf{Assume $F$ is archimedean}.   The action \eqref{geo:action} induces an action of $U(\mathfrak{m}^{\mathrm{ab}} \oplus \mathfrak{g})$ on $C^\infty(X_{P}^\circ(F)).$  Here $U(\mathfrak{m}^{\mathrm{ab}} \oplus \mathfrak{g})$ is the universal enveloping algebra of the complexification of the Lie algebra $\mathfrak{m}^{\mathrm{ab}} \oplus \mathfrak{g}$ of $M^{\mathrm{ab}} \times G,$ viewed as a real Lie algebra.

Let $\mathcal{C}_{\beta_0}(X_{P}(F))$ be the set of all $f \in C^\infty(X_P^\circ(F))$ such that for all $D \in U(\mathfrak{m}^{\mathrm{ab}} \oplus \mathfrak{g})$ and each character $\chi:(F^\times)^{\Delta_P} \to \CC^\times$ the integral 
\eqref{Mellin} defining $(D.f)_{\chi_s}$ converges for $\mathrm{Re}(s_{\beta_0})$ large enough, is a good section, and satisfies the following condition:  For all real numbers $A<B,$ $Q \in \{P,P^*\},$ any polynomials $p_{P|Q}$ such that $p_{P|Q}(s)a_{P|Q}(\eta_s)$ has no poles  for all $(s,\eta) \in V_{A,B} \times \widehat{K}_{\GG_m}^{\Delta_P}$ and all compact subsets $\Omega \subset X_{P}^\circ(F)$ one has 
$$
|f|_{A,B,p_{P|Q},\Omega,D}:=\sum_{\eta \in \widehat{K}_{\GG_m}^{\Delta_P}}\mathrm{sup}_{g \in \Omega}|\mathcal{R}_{P|Q}(D.f)_{\eta_s}(g)|_{A,B,p_{P|Q}}<\infty.
$$
\noindent We observe that it is indeed possible to choose $p_{P|Q}(s)$ as above (independently of $\eta$).  This follows directly from the definition of $a_{P|Q}(\eta_s).$  Since we have defined $\mathcal{C}_{\beta_0}(X_{P}(F))$ for archimedean $F$ we can and do define $\mathcal{S}(Y_{P,P'}(F))$ as in \eqref{image}.

In the archimedean case the seminorms $|\cdot|_{A,B,p_{P|Q},\Omega,D}$ give $\mathcal{S}(Y_{P,P'}(F))$ the structure of a Fr\'echet space as we now explain.  The seminorms 
 $|\cdot|_{A,B,p_{P|Q},\Omega,D}$ give $\mathcal{C}_{\beta_0}(X_{P}(F))$ the structure of a Fr\'echet space via a standard argument.
See \cite[Lemma 3.2]{Getz:Hsu} for the proof in a special case, the proof in general is essentially the same.  As in the proof of \cite[Lemma 3.5]{Getz:Hsu}, using Mellin inversion, one checks that with respect to this Fr\'echet structure evaluating a function in $\mathcal{C}_{\beta_0}(X_P(F))$ at a point of $X_P^{\circ}(F)$ is a continuous linear functional on $\mathcal{C}_{\beta_0}(X_P(F)).$  Thus the $\CC$-linear subspace $I \leq \mathcal{C}_{\beta_0}(X_{P}(F))$ consisting of functions that vanish on $Y_{P}(F)$ is closed subspace.  
Restriction of functions to $Y_{P}(F)$ induces a $\CC$-linear isomorphism
\begin{align}
    \mathcal{C}_{\beta_0}(X_P(F))/I \lto \mathcal{S}(Y_{P,P'}(F))
\end{align}
and thus we obtain a Fr\'echet topology on $\mathcal{S}(Y_{P,P'}(F))$ by transport of structure.  This definition is inspired by \cite[\S 2.2]{Elazar:Shaviv}. 
The set of seminorms giving $\mathcal{S}(Y_{P,P'}(F))$ its topology are
$$
|f|_{A,B,p_{P|Q},\Omega,D}:=\mathrm{inf}\{|\widetilde{f}|_{A,B,p_{P|Q},\Omega,D}: \widetilde{f} \in \mathcal{C}_{\beta_0}(X_P(F)) \textrm{ and } \widetilde{f}|_{Y_{P}(F)}=f\}
$$
for $f \in \mathcal{S}(Y_{P,P'}(F))$. 
Let $H\leq G$ act on $G$ via right multiplication, and assume that $H$ stabilizes $Y.$ Then
 \eqref{geo:action} restricts to an action of $M^{\mathrm{ab}}(F) \times H(F)$ on $Y_{P}(F).$  This induces an action of $M^{\mathrm{ab}}(F) \times H(F)$ on $\mathcal{S}(Y_{P,P'}(F))$  that is continuous in the archimedean case.  

\begin{remarks} \item We have defined the Schwartz space in terms of restrictions of functions on $X_P^\circ(F)$ in order to take advantage of the transitive action of $G(F)$ on $X_P^{\circ}(F).$  

\item The space $Y_{P,P'}(F)$ plays no role in the definition of $\mathcal{S}(Y_{P,P'}(F)).$  However, at least in the nonarchimdean case, using the results of \cite{Hsu} it should be possible to give a characterization of $\mathcal{S}(Y_{P,P'}(F))$ as the space of smooth functions on $Y_{P}(F)$ that have particular germs as one approaches the boundary $Y_{P,P'}(F)-Y_{P}(F).$  
\end{remarks}

As in the special cases treated in \cite{Getz:Liu:BK,Getz:Hsu,Getz:Hsu:Leslie}, we have defined the Schwartz space to be the space of smooth functions with sufficiently well-behaved Mellin transforms.  This is reasonable because we can obtain information on  $f$ from its Mellin transforms via Mellin inversion as in the proof of Lemma \ref{lem:esti} below.

We now discuss the problem of bounding functions in $\mathcal{S}(Y_{P,P'}(F)).$  Since we have an embedding $\mathrm{Pl}_P:X_P \to V$ of $X_P$ into an affine space we will phrase our bounds in terms of this affine space.  Let $K \leq G(F)$ be a maximal compact subgroup such that the Iwasawa decomposition 
$$
G(F)=P(F)K
$$
holds. If we are in the unramified setting in the sense of \S \ref{ssec:unram} we take $K=G(\OO_F).$  The group $K$ does not act on $Y_{P}(F)$ in general.

The group $G(F)$ acts on each $V_\beta(F).$ 
For each $\beta \in \Delta_P$ choose a $K$-invariant norm $|\cdot|_\beta$ on the $F$-vector space $V_\beta(F).$  As a warning, for $F=\CC$ we have $|cv|_\beta=c\bar{c}|v|_\beta$ for $c \in F$ and $v \in V_{\beta}(F)$ (this is the ``number theorist's norm''). 
If we write $x=P^{\mathrm{der}}(F)mk$ with $(m,k) \in M^{\mathrm{ab}}(F) \times K$ then by Lemma \ref{lem:Plucker:stuff} one has
\begin{align} \label{normbeta}
|\mathrm{Pl}_{P_\beta}(x)|_\beta=|\mathrm{Pl}_{P_\beta}(m)|_\beta=|\omega_{\beta}(m)|^{-1}.
\end{align}
The inverse here appears because $G$ is acting on $V_{\beta}$ on the right. Let $r_{\beta} \in \RR_{>0}$ be the real numbers satisfying 
\begin{align} \label{ri}
\prod_{\beta \in \Delta_P}|\omega_\beta(m)|^{r_{\beta}}=\delta_P^{1/2}(m).
\end{align}
\quash{\begin{rem}
One can deduce a formula for the $r_\beta$ by reducing to the case of a maximal parabolic and applying \cite[Proposition 6.2]{Getz:Hsu:Leslie}.
\end{rem}}

Recall the definition of $V_{\beta_0}$ from \eqref{Vbeta} and $V_{\Delta_{P'}}^{\circ}$ from \eqref{VXi}.

\begin{lem} \label{lem:esti}
Let $f \in \mathcal{S}(Y_{P,P'}(F)).$ For any sufficiently small $\alpha \in \RR_{>0}$ there is a nonnegative Schwartz function $\Phi_{f,\alpha} \in \mathcal{S}(V_{\beta_0}(F) \times  V_{\Delta_{P'}}^\circ(F))$ such that
\begin{align*}
|f(x)| \leq  \Phi_{f,\alpha}(\mathrm{Pl}_P(x))\prod_{\beta \in \Delta_{P}}|\mathrm{Pl}_{P_{\beta}}(x)|_\beta^{\alpha-r_\beta}
\end{align*}
where $r_{\beta}$ is in \eqref{ri}.
If $F$ is archimedean, then $\Phi_{f,\alpha}$ can be chosen continuously as a function of $f.$
\end{lem}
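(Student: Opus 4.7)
The strategy is Mellin inversion on the split torus $M^{\mathrm{ab}}(F) \cong (F^\times)^{\Delta_P}$ combined with contour shifts, exploiting the good-section property of Mellin transforms of elements of $\mathcal{C}_{\beta_0}(X_P(F))$. First I would lift $f$ to some $\widetilde{f} \in \mathcal{C}_{\beta_0}(X_P(F))$. The Iwasawa decomposition $G(F)=P(F)K$ together with \eqref{geo:action} lets us write any $x \in X_P^\circ(F)$ as $x = m^{-1}\cdot k$ for some $(m,k) \in M^{\mathrm{ab}}(F)\times K$; by \eqref{normbeta} we have $|\mathrm{Pl}_{P_\beta}(x)|_\beta = |\omega_\beta(m)|^{-1}$, so that controlling $|f(x)|$ in the $m$-variable is exactly controlling the blow-up of the right-hand side of the stated bound. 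Applying Mellin inversion to the function $m \mapsto \delta_P^{1/2}(m)\widetilde{f}(m^{-1}\cdot k)$ on $M^{\mathrm{ab}}(F)$ expresses $\widetilde f(x)$ as $\delta_P^{-1/2}(m)$ times a sum over $\eta \in \widehat{K}_{\GG_m}^{\Delta_P}$ of contour integrals $\int_{\mathrm{Re}(s)=c}\widetilde{f}_{\eta_s}(k)\,\eta_s(\omega_P(m))^{-1}\,ds$.

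Next I would shift the contour. The good-section property says $\widetilde{f}_{\eta_s}/a_{P|P}(\eta_s)$ is holomorphic, and Lemma \ref{lem:holo} gives $a_{P|P}(\eta_s)$ holomorphic and nonzero on $\mathrm{Re}(s_{\beta_0})\geq -\varepsilon$; holomorphy in the other $s_\beta$ is built into the definition of $\mathcal{C}_{\beta_0}(X_P(F))$ within the region where the Mellin integral converges. For $\alpha > 0$ small enough (depending on $\varepsilon$ and the $r_\beta$), the contour can be shifted to a line $\mathrm{Re}(s_\beta)=c_\beta$ chosen so that $\prod_\beta|\omega_\beta(m)|^{-r_\beta-c_\beta}$ matches the target exponent $\prod_\beta|\mathrm{Pl}_{P_\beta}(x)|_\beta^{\alpha-r_\beta}$ without crossing any poles. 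In the archimedean case, absolute convergence of the shifted integral is ensured by inserting a differential operator $D \in U(\mathfrak{m}^{\mathrm{ab}} \oplus \mathfrak{g})$; this either multiplies the Mellin transform by a polynomial in $s$ (for the $\mathfrak{m}^{\mathrm{ab}}$-part) or translates $k$ (for the $\mathfrak{g}$-part), and the seminorms $|\widetilde f|_{A,B,p_{P|P},\Omega,D}$ with $\Omega$ containing the image of $K$ in $X_P^\circ(F)$ control the integrand uniformly in $s$ on the strip and continuously in $k$ on $K$.

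This yields a pointwise estimate $|f(x)| \leq C(k)\prod_\beta|\mathrm{Pl}_{P_\beta}(x)|_\beta^{\alpha-r_\beta}$ with $C(k)$ continuous on $K$. To upgrade the $K$-dependence of $C(k)$ and the growth at $|\mathrm{Pl}_{P_{\beta_0}}(x)|_{\beta_0} \to \infty$ into a single Schwartz function $\Phi_{f,\alpha}(\mathrm{Pl}_P(x))$ on $V_{\beta_0}(F)\times V_{\Delta_{P'}}^\circ(F)$, I would iterate the argument with higher-order $D \in U(\mathfrak{m}^{\mathrm{ab}})$: each such $D$ multiplies the Mellin transform of $\widetilde f$ by a polynomial in $s$ of arbitrary degree and so, via the seminorms, yields arbitrary polynomial decay in the $|\omega_\beta(m)|$-variables at zero. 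Envelope-ing the resulting polynomial-order estimates gives Schwartz decay, and transporting the bound back through $\mathrm{Pl}_P$ (which identifies $X_P^\circ(F)$ with an orbit in $V^\circ(F)$) produces the required $\Phi_{f,\alpha}$. Continuous dependence of $\Phi_{f,\alpha}$ on $f$ in the archimedean case follows from the continuity of each seminorm used in the Fr\'echet topology on $\mathcal{S}(Y_{P,P'}(F))$. The main obstacle is this last step: the bound produced at one fixed contour is only polynomial in each $|\omega_\beta(m)|$-direction, and turning it into a genuine Schwartz envelope simultaneously in all directions requires combining infinitely many contour estimates (one per order of $D$) while preserving continuity in $f$, which is a somewhat delicate bookkeeping argument rather than a deep geometric input.
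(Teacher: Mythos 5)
Your proposal follows the same route as the paper: lift to $\mathcal{C}_{\beta_0}(X_P(F))$, write $x=P^{\mathrm{der}}(F)mk$ via the Iwasawa decomposition, apply Mellin inversion on $M^{\mathrm{ab}}(F)$, and shift the contour to $\mathrm{Re}(s_\beta)=-\alpha$ using the good-section property together with Lemma \ref{lem:holo}; the constant $\prod_\beta|\mathrm{Pl}_{P_\beta}(x)|_\beta^{\alpha-r_\beta}$ then falls out of \eqref{normbeta} and \eqref{ri}, and continuity in $f$ comes from the seminorms, exactly as in the paper (which delegates this endgame to the argument of Getz--Hsu, Lemma 3.5).

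The one step whose justification is wrong as stated is the production of the Schwartz envelope $\Phi_{f,\alpha}$. Applying $D\in U(\mathfrak{m}^{\mathrm{ab}})$ multiplies $f_{\eta_s}$ by a polynomial $p_D(s)$, and the seminorm bounds on $(D.f)_{\eta_s}$ therefore give decay of $f_{\eta_s}(k)$ in $|\mathrm{Im}(s)|$ along a \emph{fixed} vertical strip; this is what makes the inversion integral absolutely convergent in the archimedean case, but it does not improve the dependence on $m$ at all --- at a fixed contour $\mathrm{Re}(s)=\sigma$ you only ever get $|f(mk)|\ll \prod_\beta|\omega_\beta(m)|^{r_\beta+\sigma_\beta}$. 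The rapid decay as $|\mathrm{Pl}_{P_\beta}(x)|_\beta\to\infty$ (i.e.\ $|\omega_\beta(m)|\to 0$), which is what forces $\Phi_{f,\alpha}$ to be Schwartz rather than merely bounded, comes instead from shifting $\mathrm{Re}(s_\beta)$ arbitrarily far to the \emph{right}, where the Mellin transform converges absolutely by the definition of the Schwartz space; one then takes the minimum of the right-shifted bounds $\prod_\beta|\mathrm{Pl}_{P_\beta}(x)|_\beta^{-N-r_\beta}$ and the single left-shifted bound at $\mathrm{Re}(s_\beta)=-\alpha$. So ``one contour per order of $D$'' should be ``one contour per order of polynomial decay at infinity, with a fixed $D$ of sufficiently high order ensuring convergence on each.'' (There is also a sign slip in your exponent matching: since $|\mathrm{Pl}_{P_\beta}(x)|_\beta=|\omega_\beta(m)|^{-1}$, the contour giving the exponent $\alpha-r_\beta$ is $c_\beta=-\alpha$, not the one making $\prod_\beta|\omega_\beta(m)|^{-r_\beta-c_\beta}$ equal the target.) With these corrections the argument is the paper's.
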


\begin{proof}
Let
$$
I_F:=\begin{cases}i\left[ -\frac{\log q}{\pi},\frac{\log q}{\pi}\right] & \textrm{ if }F \textrm{ is non-archimedean}\\
i\RR & \textrm{ if }F \textrm{ is archimedean.}\end{cases}
$$ 
Let $c_\psi \in \RR_{>0}$ be chosen so that $c_\psi dx$ is the standard Haar measure on $F$, where $dx$ is normalized to be self-dual with respect to $\psi.$  Here the standard Haar measure is the Lebesgue measure if $F=\RR$, twice the Lebesgue measure if $F=\CC$, and the unique Haar measure giving $\OO_F$ measure $|\mathfrak{d}|^{1/2}$ if $F$ is nonarchimedean, where $\mathfrak{d}$ is a generator for the absolute different.  Then let
\begin{align}
    c_F:=\begin{cases} c_\psi \log q &\textrm{ if } F \textrm{ is nonarchimedean}\\
    \frac{c_\psi}{2} &\textrm{ if }F=\RR\\
    \frac{c_\psi}{2\pi} &\textrm{ if }F=\CC \end{cases}
\end{align}

For suitable continuous functions $f:Y_{P}(F) \to \CC$ the Mellin inversion formula states that 
\begin{align} \label{Mellin:inversion}
f(x)=\int_{\sigma+I_F^{\Delta_P}}\sum_{\eta \in \widehat{K}_{\GG_m}^{\Delta_P}}f_{\eta_s}(x) \frac{c_F^{|\Delta_P|}ds}{(2\pi i)^{|\Delta_P|}}
\end{align} 
for suitable $\sigma \in \RR^{\Delta_P}$.  By \cite[\S 2]{Blomer_Brumley_Ramanujan_Annals} this formula holds whenever the integral defining $f_{\eta_s}$ is absolutely convergent for all $\eta \in \widehat{K}_{\GG_m}^{\Delta_P}$ and $s$ such that $\mathrm{Re}(s)=\sigma$ and 
\begin{align}
    \int_{\sigma+I_F^{\Delta_P}}\sum_{\eta \in \widehat{K}_{\GG_m}^{\Delta_P}}|f_{\eta_s}(x)| ds<\infty.
\end{align}

By Lemma \ref{lem:holo} we have that $a_{P|P}(\chi)$ is holomorphic for $\mathrm{Re}(s_{\beta_0}) \geq -\varepsilon$ for some $\varepsilon>0$ independent of the character $\chi.$  It follows that, for $f \in \mathcal{S}(Y_{P,P'}(F))$,  \eqref{Mellin:inversion} holds for $\sigma=(-\varepsilon/2,\dots,-\varepsilon/2).$  Writing $x=P^{\mathrm{der}}(F)mk$ with $(m,k) \in M^{\mathrm{ab}}(F) \times K$ the above becomes
\begin{align}
    f(mk)=\int_{\sigma+I_F^{\Delta_P}}\sum_{\eta \in \widehat{K}_{\GG_m}^{\Delta_P}}\delta_P^{1/2}(m)\eta_s(\omega_P(m))f_{\eta_s}(k)  \frac{c_F^{|\Delta_P|}ds}{(2\pi i)^{|\Delta_P|}}.
\end{align}
To obtain the bound and the continuity statement from this expansion one uses the same argument as that proving \cite[Lemma 3.5]{Getz:Hsu}.  
\end{proof}

\subsection{The Fourier transform} 

To ease notation let
\begin{align} \label{muPs} \begin{split}
    \mu_P:&=\mu_{\widehat{\mathfrak{n}}^e_{P|P^*}}\\
    \mu_{P}(\chi_s):&=\mu_{\widehat{\mathfrak{n}}^e_{P|P^*}}(\chi_s) \end{split}
\end{align}
where the operator (resp.~function) on the right is defined as in \eqref{muN} (resp.~\eqref{muNchi}).

By the same argument proving \cite[Theorem 5.12]{Getz:Hsu:Leslie} we obtain the following theorem:
\begin{thm} \label{thm:FT}
The map
$$
\mathcal{F}_{P|P^*}:=\mu_P \circ \mathcal{R}_{P|P^*}:\mathcal{S}(Y_{P,P'}(F)) \lto \mathcal{S}(Y_{P^*,P'}(F))
$$
is a well-defined isomorphism, bicontinuous in the archimedean case. Moreover the diagram
\begin{equation*}
\begin{tikzcd}[column sep=huge]
\mathcal{S}(Y_{P,P'}(F)) \arrow[r,"\mathcal{F}_{P|P^*}"] \arrow[d,"(\cdot)_{\chi_s}"] & \mathcal{S}(Y_{P^*,P'}(F)) \arrow[d,"(\cdot)^*_{\chi_s}"]\\I_P(\chi_s)|_{Y_P(F)} \arrow[r,"\mu_{P}(\chi_s)\mathcal{R}_{P|P^*}"] &I_{P^*}^*(\chi_s)|_{Y_{P^*}(F)}
\end{tikzcd}
\end{equation*} 
commutes for all $\chi:F^\times \to \CC^\times$ and $s \in \CC^\times$.  
\qed
\end{thm}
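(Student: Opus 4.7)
The plan is to follow the Mellin-theoretic strategy of \cite{Getz:Liu:BK, Getz:Hsu, Getz:Hsu:Leslie}. Since $\mathcal{S}(Y_{P,P'}(F))$ is defined precisely so that its elements $f$ have Mellin transforms $f_{\chi_s}$ which are good sections of $I_P(\chi_s)$, the content of the theorem---both well-definedness of $\mathcal{F}_{P|P^*}$ and its invertibility---can be translated entirely to the Mellin side and then pulled back by Mellin inversion as in the proof of Lemma \ref{lem:esti}.

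The first step is to verify the commutative diagram. For $\mathrm{Re}(s_{\beta_0})$ sufficiently large, commuting the $P^*$-Mellin integral over $M^{\mathrm{ab}}(F)$ with the intertwining integral over $N_{P^*\cap M_{\beta_0}}(F)$ by Fubini---using that the measure normalizations of \S\ref{ssec:measures} were chosen precisely so that factorization of intertwining operators holds with trivial constants---shows that the $P^*$-Mellin transform of $\mathcal{R}_{P|P^*}(f)$ equals $\mathcal{R}_{P|P^*}(f_{\chi_s})$. For the $\mu_P$ operator, the input is Tate's local functional equation: each regularized operator $\lambda_{i!}(s_i)^{\mathrm{reg}}$ acts on the Mellin transform by multiplication by $\gamma(-s_i,\chi_s\circ\lambda_i,\psi)=\mu_{\mathcal{N}_i}(\chi_s)$, and the ordering of the composition \eqref{muN} by increasing $s_i/\lambda_i$ is arranged so that each regularization converges in the appropriate half-plane as one moves through the composition. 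The composition therefore multiplies the Mellin transform by $\mu_P(\chi_s)$, which is the content of the lower horizontal arrow.

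With the diagram in hand, the Mellin transform of $\mathcal{F}_{P|P^*}(f)$ is $\mu_P(\chi_s)\mathcal{R}_{P|P^*}(f_{\chi_s})$. Because $f_{\chi_s}$ is a good section, $\mathcal{R}_{P|P^*}(f_{\chi_s})/a_{P|P^*}(\chi_s)$ is holomorphic, and a direct computation with the definition of $\gamma$-factors together with the symmetries \eqref{ap} identifies $\mu_P(\chi_s)\mathcal{R}_{P|P^*}(f_{\chi_s})$ as a good section of $I_{P^*}^*(\chi_s)$. Applying this to $D.f$ for each $D\in U(\mathfrak{m}^{\mathrm{ab}}\oplus\mathfrak{g})$ and inverting Mellin recovers $\mathcal{F}_{P|P^*}(f)$ as an element of $\mathcal{S}(Y_{P^*,P'}(F))$. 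The inverse is an explicit constant multiple of $\mathcal{F}_{P^*|P}$: on the Mellin side, the composition of unnormalized intertwiners $\mathcal{R}_{P^*|P}\circ\mathcal{R}_{P|P^*}$ is scalar multiplication by a ratio of $L$-factors that is exactly cancelled by $\mu_{P^*}(\chi_s)\mu_P(\chi_s)$, leaving only an $\varepsilon$-factor constant.

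For continuity in the archimedean case, the seminorms $|\cdot|_{A,B,p_{P|Q},\Omega,D}$ defining the Fr\'echet topology are tailored precisely to the Mellin-side behavior: multiplication by $\mu_P(\chi_s)$ has polynomial growth on vertical strips modulo its poles, and these poles are absorbed into the $a_{P|Q}$-normalization built into the seminorms. The main technical obstacle is justifying that the regularized operators $\lambda_{i!}(s_i)^{\mathrm{reg}}$ are well-defined and continuous on all of $\mathcal{S}(Y_{P,P'}(F))$---whose elements need not be compactly supported, in contrast to the case of $\mathcal{C}(Y_{P^*}(F))$---and that the Schwartz class is preserved at each intermediate stage of the composition. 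Here one leans on the uniform bound supplied by Lemma \ref{lem:esti}, with the continuous dependence of the majorant $\Phi_{f,\alpha}$ on $f$ yielding continuity of the composite; this is precisely the step that requires the mechanism developed in \cite{Getz:Liu:BK,Getz:Hsu:Leslie}.
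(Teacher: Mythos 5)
Your proposal is correct and takes essentially the same route as the paper, which gives no independent argument for Theorem \ref{thm:FT} but simply asserts it follows ``by the same argument proving \cite[Theorem 5.9 and Proposition 5.10]{Getz:Hsu:Leslie}''; your sketch is an accurate unpacking of exactly that Mellin-theoretic argument (commutativity of the diagram via Fubini and Tate's local functional equation, preservation of good sections, and continuity via the $a_{P|Q}$-normalized seminorms). The only caveat is your remark that inverting leaves ``an $\varepsilon$-factor constant'': the paper's Corollary \ref{cor:Fourier:inverse} asserts $\mathcal{F}_{P|P^*}\circ\mathcal{F}_{P^*|P}=\mathrm{Id}$ on the nose (deduced from Braverman--Kazhdan via Proposition \ref{prop:iotay}), though a nonzero scalar already suffices for the isomorphism claim in the theorem.
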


\noindent Recall that $Y_P$ is the image of $Y \subset G$ in $X_P^\circ$ (see above \eqref{Cgamma}).  We point out that $\mathcal{R}_{P|P^*}$ is well-defined on the the space $I_P(\chi_s)|_{Y_P(F)}$ of restrictions of sections in $I_P(\chi_s)$ to $Y_P(F)$ since $Y$ is invariant under left translation by $P'.$

The commutativity of the diagram must be understood in the sense that one has an identity of meromorphic functions
$$
\mathcal{F}_{P|P^*}(f)_{\chi_s}^*=\mu_{P}(\chi_s)\mathcal{R}_{P|P^*}(f_{\chi_s}).
$$
Let $H \leq G$ be a subgroup, and consider its action on $G$ via right multiplication.
Assume that $Y$ is stable under the action of $H.$  
For 
$$
(m,h,x_1,x_2) \in M^{\mathrm{ab}}(F) \times H(F) \times Y_{P}(F) \times Y_{P^*}(F)
$$ 
and $(f_1,f_2) \in \mathcal{S}(Y_{P,P'}(F)) \times \mathcal{S}(Y_{P^*,P'}(F))$ let
\begin{align} \label{Sch:act}
L(m)R(h)f_1(x_1)=f_1(m^{-1}x_1h), \quad L(m)R(h)f_2(x_2)= f_2(m^{-1}x_2h)
\end{align}
be the left and right translation operators.  It is easy to see that $L(m)R(h)$ preserves $\mathcal{S}(Y_{P,P'}(F))$ and $\mathcal{S}(Y_{P^*,P'}(F)).$

\begin{lem} \label{lem:equiv}
One has
$$
\mathcal{F}_{P|P^*} \circ L(m)R(h)=\delta_{ P^* \cap M'}(m)L(m)R(h) \circ \mathcal{F}_{P|P^*}.
$$
\end{lem}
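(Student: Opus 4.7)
The plan is to use the factorization $\mathcal{F}_{P|P^*} = \mu_P \circ \mathcal{R}_{P|P^*}$ and verify the twisted equivariance for each factor separately. The modular character $\delta_{P^* \cap M'}(m)$ will come entirely from the intertwining integral $\mathcal{R}_{P|P^*}$, while $\mu_P$ should commute outright with both $L(m)$ and $R(h)$.

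First I would handle $\mathcal{R}_{P|P^*}$. Since this operator integrates over $N_{P^* \cap M'}(F)$ acting on the left and $R(h)$ acts on the right, commutativity $\mathcal{R}_{P|P^*} \circ R(h) = R(h) \circ \mathcal{R}_{P|P^*}$ is immediate from Fubini. For the left action by $m \in M^{\mathrm{ab}}(F)$, pick a lift $\tilde m \in M(F)$ (possible by Lemma~\ref{lem:surj}) and compute
\begin{align*}
\mathcal{R}_{P|P^*}(L(m)f)(x) = \int_{N_{P^* \cap M'}(F)} f(\tilde m^{-1} u x)\, du.
\end{align*}
Substituting $u = \tilde m u' \tilde m^{-1}$ and using that $M$ is the Levi subgroup of $P^* \cap M'$ (which is a parabolic of $M'$ because $P^* \leq P'$ has Levi $M$ and contains $M \leq M'$), the Jacobian of conjugation on $N_{P^* \cap M'}(F)$ is exactly $\delta_{P^* \cap M'}(\tilde m)$. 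This yields
\begin{align*}
\mathcal{R}_{P|P^*} \circ L(m) = \delta_{P^* \cap M'}(m) L(m) \circ \mathcal{R}_{P|P^*}.
\end{align*}

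Next I would handle $\mu_P$. By definition \eqref{muP}--\eqref{muN}, $\mu_P$ is a composition of operators $\lambda_!(s')$, each of which is an integral in $a \in F^\times$ against $\psi(a)|a|^{s'}$ of translates of $f$ by $\lambda(a^{-1}) \in M^{\mathrm{ab}}(F)$ acting on the left. Left multiplication by $\lambda(a^{-1})$ obviously commutes with $R(h)$, and it commutes with $L(m)$ because $M^{\mathrm{ab}}$ is abelian. In the regularized form, the regularizing factor $\Phi(a/b)$ depends only on $a$, so neither the convergence nor the limit is affected, and we conclude $\mu_P \circ L(m)R(h) = L(m)R(h) \circ \mu_P$.

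Combining the two steps gives
\begin{align*}
\mathcal{F}_{P|P^*} \circ L(m)R(h)
&= \mu_P \circ \mathcal{R}_{P|P^*} \circ L(m)R(h)\\
&= \delta_{P^* \cap M'}(m)\, \mu_P \circ L(m)R(h) \circ \mathcal{R}_{P|P^*}\\
&= \delta_{P^* \cap M'}(m)\, L(m)R(h) \circ \mathcal{F}_{P|P^*},
\end{align*}
as desired. The only genuinely delicate point is the identification of $M$ as the Levi of the parabolic $P^* \cap M'$ of $M'$, which is what makes the Jacobian of conjugation on $N_{P^* \cap M'}$ equal to $\delta_{P^* \cap M'}$; this is routine but uses Lemma~\ref{lem:der} (applied with $P$ replaced by $P^*$) together with the assumption~\eqref{P'assump}. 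The commutation of $\mu_P$ with $L(m)R(h)$ in the archimedean case should also be checked to go through at the level of the regularized integrals, but this follows at once because the regularizing factor depends only on the integration variable.
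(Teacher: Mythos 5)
Your proposal is correct and follows the same route as the paper's (much terser) proof: the paper simply notes that $\mu_P$ is $M^{\mathrm{ab}}(F)\times H(F)$-equivariant and that the twist by $\delta_{P^*\cap M'}(m)$ comes from the definition \eqref{Radon} of $\mathcal{R}_{P|P^*}$, which is exactly the Jacobian computation you carry out. Your version just makes explicit the details (the lift of $m$ via Lemma \ref{lem:surj}, the identification of $N_{P^*\cap M'}$ as the unipotent radical of the parabolic $P^*\cap M'$ of $M'$ with Levi $M$, and the behavior of the regularized integrals defining $\mu_P$) that the paper leaves to the reader.
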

\begin{proof}
The operator $\mu_P$ is $M^{\mathrm{ab}}(F) \times H(F)$-equivariant.  Thus the lemma follows from the definition \eqref{Radon} of $\mathcal{R}_{P|P^*}.$
\end{proof}

 We have Schwartz spaces 
\begin{align} \label{SspacesX}
\mathcal{S}(X_{P \cap M_{\beta_0}}(F)) \quad \textrm{and} \quad \mathcal{S}(X_{P^* \cap M_{\beta_0}}(F))
\end{align}
defined as in \cite[\S 5.2]{Getz:Hsu:Leslie} and a Fourier transform
\begin{align}
\mathcal{F}_{P \cap M_{\beta_0}|P^* \cap M_{\beta_0}}:\mathcal{S}(X_{P \cap M_{\beta_0}}(F)) \lto \mathcal{S}(X_{P^* \cap M_{\beta_0}}(F))
\end{align}
defined as in \cite[Theorem 5.12]{Getz:Hsu:Leslie}.  
It is an isomorphism, bicontinuous in the archimedean case.  
These facts are a special case of our construction of the Schwartz space $\mathcal{S}(Y_{P,P'}(F))$ and the Fourier transform $\mathcal{F}_{P|P^*}.$  One simply replaces 
\begin{align} \label{repl1}
(G,P,P',Y)
\end{align}
by 
\begin{align} \label{repl2}
(M_{\beta_0},P \cap M_{\beta_0},M_{\beta_0},M_{\beta_0}).
\end{align}

Recall that for each $y \in Y(F)$ we have a map $\iota_y:X_{P \cap M_{\beta_0}}^\circ \to Y_{P}$ defined as in \eqref{iotay0}.

\begin{prop} \label{prop:iotay}
For each $y \in Y(F)$
one has a surjective map
\begin{align*}
\iota_y^*:\mathcal{S}(Y_{P,P'}(F)) &\lto \mathcal{S}(X_{P \cap M_{\beta_0}}(F))\\
f &\longmapsto f \circ \iota_y
\end{align*}
that fits into a commutative diagram
\begin{equation*}
    \begin{tikzcd}[column sep=huge]
    \mathcal{S}(Y_{P,P'}(F)) \arrow[r,"\mathcal{F}_{P|P^*}"] \arrow[d,"\iota_y^*"] & \mathcal{S}(Y_{P^*,P'}(F)) \arrow[d,"\iota_y^*"]\\
    \mathcal{S}(X_{P \cap M_{\beta_0}}(F)) \arrow[r,"\mathcal{F}_{P \cap M_{\beta_0}|P^* \cap M_{\beta_0}}"] &\mathcal{S}(X_{P^* \cap M_{\beta_0}}(F)).
    \end{tikzcd}
\end{equation*}
If $F$ is archimedean then $\iota_y^*$ is continuous.
\end{prop}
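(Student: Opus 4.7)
The plan is to establish the three claims of Proposition \ref{prop:iotay}---well-definedness of $\iota_y^*$, commutativity of the diagram, and continuity---by systematically reducing everything to Mellin transforms and exploiting the structural identifications provided by Lemma \ref{lem:der}.

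The first task is to compute $(f\circ\iota_y)_{\chi_s}(g)$ for $g\in M_{\beta_0}(F)$, where $\chi_s:F^\times\to\CC^\times$ is identified with a character of $(M\cap M_{\beta_0})^{\mathrm{ab}}(F)\cong F^\times$ via $\omega_{\beta_0}$. Using the equivariance of $\iota_y$ built into \eqref{iotay}, combined with the fact (immediate from \eqref{omegabeta}) that $\omega_\beta|_{M\cap M_{\beta_0}}$ is trivial for $\beta\in\Delta_P\setminus\{\beta_0\}$, one sees that the $(M\cap M_{\beta_0})^{\mathrm{ab}}$-Mellin transform of $f\circ\iota_y$ corresponds to the $L^{\mathrm{ab}}(F)\hookrightarrow M^{\mathrm{ab}}(F)$ slice of an $M^{\mathrm{ab}}$-Mellin transform of $f$ evaluated at $\iota_y(g)$, up to the explicit character $\delta_P/\delta_{P\cap M_{\beta_0}}$ restricted to $L^{\mathrm{ab}}$, which is itself a power of $|\omega_{\beta_0}|$ since the action is via the $\beta_0^\vee$-direction on $\mathrm{Lie}(N_{P'})$.

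For well-definedness, I must verify that $(f\circ\iota_y)_{\chi_s}$ defines a good section in the sense of \S\ref{sec:twist} applied to $(M_{\beta_0},P\cap M_{\beta_0},M_{\beta_0},M_{\beta_0})$. The critical algebraic input is the identity
\[
\widehat{\mathfrak{n}}_{P|P^*}=\widehat{\mathfrak{n}}_{P\cap M'}=\widehat{\mathfrak{n}}_{P\cap M_{\beta_0}}=\widehat{\mathfrak{n}}_{P\cap M_{\beta_0}|P^*\cap M_{\beta_0}},
\]
which uses Lemma \ref{lem:der} and the fact that $P\cap M'$ and $P^*\cap M'$ are opposite parabolics of $M'$. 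Choosing the principal $\mathfrak{sl}_2$-triples in $\widehat{\mathfrak{m}}$ and $\widehat{\mathfrak{m}\cap\mathfrak{m}_{\beta_0}}$ compatibly, their actions on this common space agree (roots in $\Delta_M\setminus(\Delta_{M_{\beta_0}}\setminus\{\beta_0\})$ are disconnected from $\beta_0$ in the Dynkin diagram and act trivially), so the eigenspace decomposition \eqref{BK:decomp} and the attached $s_i$'s match. Consequently $a_{P|Q}(\chi_s)$ and $a_{P\cap M_{\beta_0}|Q\cap M_{\beta_0}}(\chi_s)$ coincide, as do the factors $\mu_{P}(\chi_s)$ and $\mu_{P\cap M_{\beta_0}}(\chi_s)$ after accounting for the $\delta_{P^*}/\delta_{P^*\cap M_{\beta_0}}$ shift absorbed into the $s$-parameter.

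For commutativity of the diagram, I first verify the compatibility $\iota_y^*\circ\mathcal{R}_{P|P^*}=\mathcal{R}_{P\cap M_{\beta_0}|P^*\cap M_{\beta_0}}\circ\iota_y^*$. Both Radon transforms integrate over the same group $N_{P^*\cap M_{\beta_0}}(F)$; the equality reduces to the observation that for $u\in N_{P^*\cap M_{\beta_0}}(F)$ and $g\in M_{\beta_0}(F)$, the element $ug\in M_{\beta_0}(F)$ satisfies $P^{\mathrm{der}}ugy=\iota_y((P\cap M_{\beta_0})^{\mathrm{der}}ug)$ by the identity $P^{\mathrm{der}}\cap M_{\beta_0}=(P\cap M_{\beta_0})^{\mathrm{der}}$ from Lemma \ref{lem:der}. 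Next I show $\iota_y^*\circ\mu_P=\mu_{P\cap M_{\beta_0}}\circ\iota_y^*$: every $\lambda_i$ appearing in $\mu_P$ is an integer multiple of $\beta_0^\vee$, so $\lambda_i(F^\times)\subseteq L^{\mathrm{ab}}(F)$, and the $L^{\mathrm{ab}}$-equivariance of $\iota_y$ together with the $\delta$-factor computation of the first step yields the identity.

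Finally, continuity in the archimedean case follows by tracking the seminorms $|\cdot|_{A,B,p_{P|Q},\Omega,D}$: the Mellin-transform computation of Step 2 shows that each seminorm defining the topology of $\mathcal{S}(X_{P\cap M_{\beta_0}}(F))$ is bounded by a seminorm on $\mathcal{S}(Y_{P,P'}(F))$ (with $\Omega$ replaced by $\iota_y(\Omega)$ and $D$ adjusted to lie in $U(\mathfrak{m}_{\beta_0}^{\mathrm{ab}}\oplus\mathfrak{m}_{\beta_0})\hookrightarrow U(\mathfrak{m}^{\mathrm{ab}}\oplus\mathfrak{g})$).

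The main obstacle will be the careful matching of $\mathfrak{sl}_2$-triples and $s_i$-parameters underlying the $a$- and $\mu$-factors, together with the bookkeeping of modular characters: the identity $\delta_{P^*}|_{L^{\mathrm{ab}}}=\delta_{P^*\cap M_{\beta_0}}|_{L^{\mathrm{ab}}}\cdot|\omega_{\beta_0}|^c$ for an explicit $c$ must be reconciled against the shift in the Tate $\gamma$-factors appearing in the definitions \eqref{agamma}--\eqref{muN}. Everything else is a routine unraveling of definitions.
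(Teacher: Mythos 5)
Your proposal is correct and follows essentially the same route as the paper: both hinge on identifying $\widehat{\mathfrak{n}}_{P|P^*}$ with $\widehat{\mathfrak{n}}_{P\cap M_{\beta_0}|P^*\cap M_{\beta_0}}$ as representations equipped with compatible principal $\mathfrak{sl}_2$-triples (the paper phrases this via contravariant functoriality of dual groups and the quotient $\widehat{M}\to\widehat{M\cap M_{\beta_0}}$), so that the $a$- and $\mu$-factors agree, after which the compatibility of $\iota_y^*$ with $\mathcal{R}_{P|P^*}$ and $\mu_P$ and the seminorm estimates are the routine verifications the paper leaves to the reader. The one simplification you missed is that the modular-character discrepancy you single out as the main obstacle is vacuous: since $\beta_0^\vee(\GG_m)\subseteq M_{\beta_0}\subseteq M'^{\mathrm{der}}$, the character $\delta_{P'}$, and hence the ratio $\delta_{P^*}/\delta_{P^*\cap M_{\beta_0}}$, is identically $1$ on the relevant cocharacters, so your exponent $c$ equals $0$ and no shift of the Tate parameters occurs.
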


\noindent As observed by the referee, Proposition \ref{prop:iotay} asserts 
that $\mathcal{F}_{P|P'}$ is glued from its restrictions via $\iota_y.$ 

\begin{proof}
We recall that Langlands dual groups are contravariantly functorial with respect to morphisms of reductive algebraic groups $G \to H$ with normal image, and behave as expected with respect to Levi and parabolic subgroups.  For precise statements see \cite[Chapter I]{Borel:Corvallis}.  In particular in view of the commutative diagram
\begin{equation*}
\begin{tikzcd}
M \cap M_{\beta_0} \arrow[d,hookrightarrow]\arrow[r,hookrightarrow] & P \cap M_{\beta_0} \arrow[d,hookrightarrow] \arrow[r,hookrightarrow]  & M_{\beta_0} \arrow[d,hookrightarrow]\\
M \arrow[r,hookrightarrow]& P \cap M' \arrow[r,hookrightarrow]&  M' 
\end{tikzcd}
\end{equation*}
of inclusions of subgroups we can choose the dual groups (which are only defined canonically up to isomorphism) so that they fit in a commutative
diagram
\begin{equation*}
\begin{tikzcd}
\widehat{M \cap M_{\beta_0}} \arrow[r,hookrightarrow] & \widehat{P \cap M_{\beta_0} } \arrow[r,hookrightarrow]  & \widehat{M_{\beta_0}} \\
\widehat{M} \arrow[u,twoheadrightarrow]\arrow[r,hookrightarrow]& \widehat{P \cap M'} \arrow[r,hookrightarrow] \arrow[u,twoheadrightarrow]&  \widehat{M'} \arrow[u,twoheadrightarrow]
\end{tikzcd}
\end{equation*}
where the left horizontal arrows are inclusions of Levi subgroups and the right horizontal arrows are inclusions of parabolic subgroups.  Here the surjectivity of the right vertical arrow follows from the fact that $M_{\beta_0}$ is simple, and the surjectivity of the other vertical arrows follows from the surjectivity of the right vertical arrow.

Consider the representation 
$$
\widehat{\mathfrak{n}}_{P \cap M_{\beta_0}|P^* \cap M_{\beta_0}}=\widehat{\mathfrak{n}}_{P \cap M_{\beta_0}}
$$
of $\widehat{M \cap M_{\beta_0}}.$  
We regard it as a representation of $\widehat{M}$ via the quotient map $\widehat{M} \to \widehat{M \cap M}_{\beta_0}$. Choose a principal $\mathfrak{sl}_2$-triple in $\widehat{\mathfrak{m}}.$  Its image under the quotient map
$$
\widehat{\mathfrak{m}} \lto \widehat{\mathfrak{m} \cap \mathfrak{m}_{\beta_0}}
$$
is a principal $\mathfrak{sl}_2$-triple in $\widehat{\mathfrak{m} \cap \mathfrak{m}_{\beta_0}}.$  Using our comments on dual groups at the beginning of the proof one checks that the quotient map
$$
 \widehat{\mathfrak{n}}_{P|P^*} \lto \widehat{\mathfrak{n}}_{P \cap M_{\beta_0}|P^* \cap M_{\beta_0}} 
$$
is an isomorphism of $\widehat{M}$-representations  that restricts to a bijection
$$
\widehat{\mathfrak{n}}_{P|P^*}^e \lto \widehat{\mathfrak{n}}_{P \cap M_{\beta_0}|P^* \cap M_{\beta_0}}^e.
$$
In view of these observations it is easy to check that the map $\iota_y^*$ is well-defined and surjective, the diagram is commutative, and $\iota_y^*$ is continuous when $F$ is archimedean.
\end{proof}

\begin{cor} \label{cor:Fourier:inverse}
One has 
$$
\mathcal{F}_{P|P^*} \circ \mathcal{F}_{P^*|P}=\mathrm{Id}.
$$
\end{cor}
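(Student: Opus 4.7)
The plan is to reduce Fourier inversion on $\mathcal{S}(Y_{P^*,P'}(F))$ to the corresponding identity on the auxiliary Braverman--Kazhdan spaces $X_{P \cap M_{\beta_0}}$ and $X_{P^* \cap M_{\beta_0}}$, exploiting the fact that the pullbacks $\iota_y^*$ intertwine the Fourier transforms by Proposition \ref{prop:iotay}. Fix $f \in \mathcal{S}(Y_{P^*,P'}(F))$ and $y \in Y(F)$. Applying Proposition \ref{prop:iotay} first to $f$ with the Fourier transform $\mathcal{F}_{P^*|P}$ (the roles of $P$ and $P^*$ are symmetric, as in the discussion of opposite parabolics used to establish Theorem \ref{thm:FT}), and then to $\mathcal{F}_{P^*|P}(f)$ with $\mathcal{F}_{P|P^*}$, one obtains
$$
\iota_y^*\bigl(\mathcal{F}_{P|P^*} \circ \mathcal{F}_{P^*|P}(f)\bigr) = \mathcal{F}_{P \cap M_{\beta_0}|P^* \cap M_{\beta_0}} \circ \mathcal{F}_{P^* \cap M_{\beta_0}|P \cap M_{\beta_0}}(\iota_y^* f).
$$

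The right-hand composition is Fourier inversion on the Braverman--Kazhdan space of the maximal parabolic $P \cap M_{\beta_0} \leq M_{\beta_0}$. This identity is (essentially) a corollary of the construction of the Fourier transform in \cite{Getz:Hsu:Leslie}; on the Mellin side it amounts to the rank-one factorization-of-intertwining-operators identity
$$
\mu_{P \cap M_{\beta_0}}(\chi_s)\,\mu_{P^* \cap M_{\beta_0}}(\chi_s)\,\mathcal{R}_{P^* \cap M_{\beta_0}|P \cap M_{\beta_0}} \circ \mathcal{R}_{P \cap M_{\beta_0}|P^* \cap M_{\beta_0}} = \mathrm{Id}
$$
on $I_{P \cap M_{\beta_0}}(\chi_s)$, i.e.\ the statement that the Shahidi--Braverman--Kazhdan normalizing factors $\mu_{P \cap M_{\beta_0}}(\chi_s) \mu_{P^* \cap M_{\beta_0}}(\chi_s)$ cancel the Plancherel factor produced by the composition of intertwining operators, combined with Mellin inversion as in the proof of Lemma \ref{lem:esti}. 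Granting this, we deduce $\iota_y^*\bigl(\mathcal{F}_{P|P^*} \circ \mathcal{F}_{P^*|P}(f)\bigr) = \iota_y^*(f)$ for every $y \in Y(F)$.

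To conclude, by Lemma \ref{lem:cosets} we have $Y_{P^*}(F) = \coprod_{y \in \Xi}\iota_y(X^\circ_{P^* \cap M_{\beta_0}}(F))$ for any set $\Xi \subset Y(F)$ of representatives of $P'^{\mathrm{der}}(F) \backslash Y(F)$, so a function on $Y_{P^*}(F)$ is determined by its pullbacks $\{\iota_y^*(\cdot)\}_{y \in \Xi}$; the pointwise identity therefore upgrades to the equality $\mathcal{F}_{P|P^*} \circ \mathcal{F}_{P^*|P}(f) = f$ in $\mathcal{S}(Y_{P^*,P'}(F))$. The main obstacle is the Braverman--Kazhdan Fourier inversion invoked in the middle step: while it is morally part of the existing theory \cite{Getz:Hsu:Leslie,BK:normalized}, one must verify that it holds for the precise Schwartz space and Fourier transform used here, including a careful treatment of the regularized, conditionally convergent integrals defining $\mu_P$ and $\mathcal{R}_{P|P^*}$.
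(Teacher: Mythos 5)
Your proposal is correct and follows essentially the same route as the paper: reduce to the auxiliary Braverman--Kazhdan spaces via the intertwining property of $\iota_y^*$ from Proposition \ref{prop:iotay} (with Lemma \ref{lem:cosets} guaranteeing that the pullbacks determine the function), and then invoke Fourier inversion there. The one step you flag as the ``main obstacle''---inversion of $\mathcal{F}_{P \cap M_{\beta_0}|P^* \cap M_{\beta_0}}$---is exactly what the paper disposes of by citing \cite{BK:normalized}, so no new argument is needed beyond what you wrote.
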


\begin{proof}
In \cite{BK:normalized} Braverman and Kazhdan prove that $
\mathcal{F}_{P \cap M_{\beta_0}|P^* \cap M_{\beta_0}} \circ \mathcal{F}_{P^* \cap M_{\beta_0}|P \cap M_{\beta_0} }$ is the identity.
Thus the corollary follows from Proposition \ref{prop:iotay}.
\end{proof}

\subsection{The unramified setting}

We assume that $F$ is nonarchimedean and is unramifed over its prime field, that $\psi$ is unramified, and that we are in the unramified setting in the sense of \S \ref{ssec:unram}. 
Let
$$
\one_0 \in C_c^\infty(Y_{P}(F)).
$$
be the characteristic function of the image of $Y(\OO_F)$ in $Y_{P}(F)$. 
  We define the \textbf{basic function}
\begin{align} \label{basic:func}
    b_{Y_{P,P'}}:Y_{P}(F) \lto \CC
\end{align}
to be the unique function in $C^\infty(Y_{P}(F))$ that is finite under a compact open subgroup of $M^{\mathrm{ab}}(F)$ such that 
$$
(b_{Y_{P,P'}})_{\chi_s}=a_{P|P}(\chi_s)(\one_{0})_{\chi_s}
$$
for $\mathrm{Re}(s_{\beta_0})$ sufficiently large.  As explained in \eqref{repl1} and \eqref{repl2}, the spaces $X_{P \cap M_{\beta_0}}$ are special cases of $Y_{P,P'},$ so $b_{X_{P \cap M_{\beta_0}}}$ is defined.

\begin{lem} \label{lem:iotay:basic}
Assume that $y \in P^{\mathrm{der}}(F)\beta_0^\vee(F^\times) Y(\OO_F).$  Then $\iota_y^*(b_{Y_{P,P'}})=b_{X_{P \cap M_{\beta_0}}}.$
\end{lem}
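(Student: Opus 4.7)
The plan is to use Mellin inversion on $(M\cap M_{\beta_0})^{\mathrm{ab}}(F)\cong F^\times$. Both $\iota_y^*(b_{Y_{P,P'}})$ and $b_{X_{P\cap M_{\beta_0}}}$ are $(M\cap M_{\beta_0})^{\mathrm{ab}}(\OO_F)$-invariant (the latter by the defining property \eqref{basic:func}; the former because $b_{Y_{P,P'}}$ is $M^{\mathrm{ab}}(\OO_F)$-invariant and $\iota_y$ is $(M\cap M_{\beta_0})^{\mathrm{ab}}$-equivariant for the left action by \eqref{iotay}). Hence it suffices to check that their Mellin transforms over $(M\cap M_{\beta_0})^{\mathrm{ab}}(F)$ agree as functions of a quasi-character $\chi'_{s'}$.

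First I would reduce to $y=\beta_0^\vee(a) y_0$ with $a\in F^\times$ and $y_0\in Y(\OO_F)$ by showing $\iota_{py}=\iota_y$ for $p\in P^{\mathrm{der}}(F)$. The factorization $P^{\mathrm{der}}=(P\cap M_{\beta_0})^{\mathrm{der}} M^{\beta_0} N_{P'}$ from Lemma \ref{lem:der}, combined with the facts that $M_{\beta_0}$ normalizes $(P\cap M_{\beta_0})^{\mathrm{der}}$, commutes with $M^{\beta_0}$ via the direct product structure $M'^{\mathrm{der}}=M_{\beta_0}\times M^{\beta_0}$, and normalizes $N_{P'}$ since $M_{\beta_0}\leq M'$ is a Levi factor, shows that $M_{\beta_0}$ normalizes $P^{\mathrm{der}}$. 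Thus for $m\in M_{\beta_0}$ lifting $x\in X^\circ_{P\cap M_{\beta_0}}$, one has $mpy\equiv my\pmod{P^{\mathrm{der}}}$, so $\iota_{py}(x)=\iota_y(x)$.

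With this reduction in place, I would compute the Mellin transform of $\iota_y^*(b_{Y_{P,P'}})$ using the equivariance of $\iota_y$ and the defining identity $(b_{Y_{P,P'}})_{\chi_s}=a_{P|P}(\chi_s)(\one_0)_{\chi_s}$, then compare with $(b_{X_{P\cap M_{\beta_0}}})_{\chi'_{s'}}=a_{P\cap M_{\beta_0}|P\cap M_{\beta_0}}(\chi'_{s'})(\one_0)_{\chi'_{s'}}$. Two compatibilities must be established: (i) the identification $a_{P|P}(\chi_s)=a_{P\cap M_{\beta_0}|P\cap M_{\beta_0}}(\chi'_{s'})$ under the pullback of quasi-characters via $\beta_0^\vee:\GG_m\hookrightarrow M^{\mathrm{ab}}$, which follows from the isomorphism $\widehat{\mathfrak{n}}^e_{P|P^*}\cong \widehat{\mathfrak{n}}^e_{P\cap M_{\beta_0}|P^*\cap M_{\beta_0}}$ of $\widehat{M}$-modules and the compatibility of principal $\mathfrak{sl}_2$-triples established in the proof of Proposition \ref{prop:iotay}, together with the direct computation $\chi_s(\omega_P(\beta_0^\vee(a)))=\chi_{\beta_0}(a)|a|^{s_{\beta_0}}$; and (ii) the identification of the $\one_0$-parts, which exploits the hypothesis $y\in P^{\mathrm{der}}(F)\beta_0^\vee(F^\times)Y(\OO_F)$ to match $\iota_y^*(\one_0^Y)$ and $\one_0^{M_{\beta_0}}$ at the level of Mellin transforms, via an Iwasawa-type decomposition.

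The main obstacle will be carrying out (ii): the factorization $\iota_y=\iota_{y_0}\circ R(\beta_0^\vee(a))$ introduces a twist whose Mellin image must be tracked against the discrepancy between $\delta_P^{1/2}|_{(M\cap M_{\beta_0})^{\mathrm{ab}}}$ and $\delta_{P\cap M_{\beta_0}}^{1/2}$ arising from the splitting $\mathfrak{n}_P=\mathfrak{n}_{P\cap M_{\beta_0}}\oplus \mathfrak{n}_{P'}$. The delicacy lies in verifying that these two contributions precisely cancel, using the triviality $M_{\beta_0}(F)\cap M^{\beta_0}(F)N_{P'}(F)=\{e\}$ (a consequence of $M'^{\mathrm{der}}=M_{\beta_0}\times M^{\beta_0}$ and $M'\cap N_{P'}=\{e\}$) together with unramified properties of $G(\OO_F)$ relative to $P$ and $M'$; this is what pinpoints the precise coset $P^{\mathrm{der}}(F)\beta_0^\vee(F^\times)Y(\OO_F)$ as the natural domain of validity of the identity.
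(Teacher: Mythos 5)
Your skeleton is the same as the paper's: compare Mellin transforms and reduce everything to the identity $a_{P|P}(\chi_s)=a_{P\cap M_{\beta_0}|P\cap M_{\beta_0}}((\chi_{\beta_0})_{s_{\beta_0}})$, which follows from the isomorphism $\widehat{\mathfrak{n}}^e_{P|P^*}\cong\widehat{\mathfrak{n}}^e_{P\cap M_{\beta_0}|P^*\cap M_{\beta_0}}$ recorded in the proof of Proposition \ref{prop:iotay}; indeed your item (i) \emph{is} the paper's entire (two-sentence) proof. The problems are in the supporting steps. Your reduction to $y=\beta_0^\vee(a)y_0$ rests on the claim that $M_{\beta_0}$ normalizes $(P\cap M_{\beta_0})^{\mathrm{der}}$ and hence $P^{\mathrm{der}}$. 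That is false: $M_{\beta_0}$ is simple and contains the root group $N_{-\beta_0}$, which does not normalize $N_{\beta_0}\leq (P\cap M_{\beta_0})^{\mathrm{der}}$ (already in $\SL_2$ the lower unipotents do not normalize the upper ones). Consequently $\iota_{py}(\bar m)=P^{\mathrm{der}}(F)mpy$ and $\iota_y(\bar m)=P^{\mathrm{der}}(F)my$ agree only when $mpm^{-1}\in P^{\mathrm{der}}(F)$, which fails for general $m\in M_{\beta_0}(F)$. By Lemma \ref{lem:der} only the $M^{\beta_0}N_{P'}$-part of $p$ can be moved past $m$; the $(P\cap M_{\beta_0})^{\mathrm{der}}$-part must instead be absorbed using the left $(P\cap M_{\beta_0})^{\mathrm{der}}(F)$-invariance of $\one_0^{M_{\beta_0}}$, so the reduction is repairable but not by a normalizer argument.

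Your item (ii) is announced as ``the main obstacle'' but never carried out, and it is exactly where the hypothesis on $y$ must do its work, so as written the proposal is incomplete at its essential point. Two remarks. First, the $\delta$-discrepancy you flag is vacuous: $\delta_P=\delta_{P\cap M'}\cdot\delta_{P'}|_{M}$ and $\delta_{P'}$ is trivial on $M'^{\mathrm{der}}\supseteq\beta_0^\vee(\GG_m)$, so $\delta_P$ and $\delta_{P\cap M_{\beta_0}}$ agree on $\beta_0^\vee(F^\times)$; combined with (i) and the equivariance $\iota_y^*\circ L(\beta_0^\vee(v))=L(\beta_0^\vee(v))\circ\iota_y^*$, the lemma is therefore \emph{equivalent} to the single statement $\iota_y^*(\one_0)=\one_0^{M_{\beta_0}}$, i.e.\ to $my\in P^{\mathrm{der}}(F)Y(\OO_F)\Leftrightarrow m\in (P\cap M_{\beta_0})^{\mathrm{der}}(F)M_{\beta_0}(\OO_F)$. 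For $y\in Y(\OO_F)$ this is an Iwasawa-type statement, and that is the only case used in Proposition \ref{prop:iotay:glob}. Second, a caution about the $\beta_0^\vee(F^\times)$ factor you describe as the ``natural domain of validity'': in the degenerate case $P'=Y=G$ one has $\iota_{\beta_0^\vee(a)}=R(\beta_0^\vee(a))$, and for $G=\SL_2$ right translation by $\mathrm{diag}(\varpi,\varpi^{-1})$ does not preserve the set of primitive vectors in $\OO_F^2$, so $\iota_{\beta_0^\vee(\varpi)}^*(\one_0)\neq\one_0$. Your proposed cancellation in (ii) therefore cannot go through for such $y$, and any complete write-up has to confront this rather than assume the twist by $R(\beta_0^\vee(a))$ is harmless.
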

\begin{proof}
We have already explained the relation between $\widehat{\mathfrak{n}}_{P \cap M_{\beta_0}|(P \cap M_{\beta_0})^*}$ and $\widehat{\mathfrak{n}}_{P|P^*}$ as representations of $\widehat{M}$ and $\widehat{M \cap M_{\beta_0}}$ in the proof of Proposition \ref{prop:iotay}.  
This relationship implies that $a_{P|P}(\chi_s)=a_{P \cap M_{\beta_0}|P \cap M_{\beta_0}}((\chi_{\beta_0})_{s_{\beta_0}}).$
\end{proof}

Define $r_\beta$ as in \eqref{ri}.  Arguing as in the proof of Lemma \ref{lem:esti} we obtain the following lemma:

\begin{lem} \label{lem:unram:esti} 
There are constants $\alpha',c>0$ independent of the cardinality of the residue field $q$  such that if $q>c$ and $\alpha'>\alpha>0$ then 
\begin{align*}
|b_{Y_{P,P'}}(x)| \leq  \one_{V_{\beta_0}(\OO_F) \times V_{\Delta_{P'}}^\circ(\OO_F)}(\mathrm{Pl}_{P}(x))\prod_{\beta \in \Delta_P}|\mathrm{Pl}_{P_\beta}(x)|^{\alpha-r_\beta}.
\end{align*} \qed
\end{lem}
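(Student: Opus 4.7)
The plan is to imitate the proof of Lemma~\ref{lem:esti} applied to $f = b_{Y_{P,P'}}$, paying extra attention to uniformity in $q$. By the definition~\eqref{basic:func} we have
\[
(b_{Y_{P,P'}})_{\chi_s}(k) = a_{P|P}(\chi_s)\,(\one_0)_{\chi_s}(k)
\]
for $\mathrm{Re}(s_{\beta_0})$ sufficiently large. By Lemma~\ref{lem:holo}, $a_{P|P}(\chi_s)$ is holomorphic on $\mathrm{Re}(s_{\beta_0}) \geq -\varepsilon$ with $\varepsilon>0$ depending only on $P,P'$. Writing $x = P^{\mathrm{der}}(F)mk$ with $(m,k) \in M^{\mathrm{ab}}(F) \times K$ and applying Mellin inversion~\eqref{Mellin:inversion} after shifting the contour to $\sigma = -\alpha(1,\dots,1)$ for $0 < \alpha \leq \varepsilon/2$, one obtains
\[
b_{Y_{P,P'}}(mk) = \delta_P^{1/2}(m)\!\int_{\sigma + I_F^{\Delta_P}} \sum_{\eta \in \widehat{K}_{\GG_m}^{\Delta_P}} \eta_s(\omega_P(m))\,a_{P|P}(\eta_s)\,(\one_0)_{\eta_s}(k)\,\frac{c_F^{|\Delta_P|}\,ds}{(2\pi i)^{|\Delta_P|}}.
\]

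The next step is to bound the integrand on this contour uniformly in $q$, $\eta$, $s$, and $k$, provided $q \geq c$ for a suitable absolute constant $c$. Both $a_{P|P}(\eta_s)$ and $(\one_0)_{\eta_s}(k)$ are finite products, with the number of factors depending only on $P$ and $P'$, of local Tate-type factors of the form $(1 - u q^{-t})^{-1}$ with $|u| \leq 1$ and $\mathrm{Re}(t)$ bounded below by a positive constant on the shifted contour. For $q$ larger than some absolute $c$ each such factor is bounded by $2$. A direct computation of $(\one_0)_{\eta_s}(k)$ from the definition~\eqref{Mellin}, unfolded along the Pl\"ucker embedding via~\eqref{action} and using that $Y(\OO_F)$ maps into $V_{\beta_0}(\OO_F) \times V_{\Delta_{P'}}^\circ(\OO_F)$ (see~\eqref{partial:closures}), shows that $(\one_0)_{\eta_s}(k)$ vanishes unless $\mathrm{Pl}_P(mk) \in V_{\beta_0}(\OO_F) \times V_{\Delta_{P'}}^\circ(\OO_F)$, and is bounded by the product of geometric series above in that case.

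On the shifted contour $|\eta_s(\omega_\beta(m))| = |\omega_\beta(m)|^{-\alpha}$ for every $\beta \in \Delta_P$. Combining with $\delta_P^{1/2}(m) = \prod_\beta |\omega_\beta(m)|^{r_\beta}$ from~\eqref{ri} and $|\mathrm{Pl}_{P_\beta}(x)|_\beta = |\omega_\beta(m)|^{-1}$ from~\eqref{normbeta}, the integrand is bounded by an absolute constant times
\[
\one_{V_{\beta_0}(\OO_F) \times V_{\Delta_{P'}}^\circ(\OO_F)}(\mathrm{Pl}_P(x)) \prod_{\beta \in \Delta_P}|\mathrm{Pl}_{P_\beta}(x)|_\beta^{\alpha - r_\beta}.
\]
The integral over the compact direction $I_F^{\Delta_P}$ together with the sum over $\eta \in \widehat{K}_{\GG_m}^{\Delta_P}$ contributes only a uniformly bounded multiplicative constant, which may be absorbed by slightly shrinking the range of $\alpha$ and enlarging $c$.

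The main obstacle is the $q$-uniform control: one must show that the number of local factors making up $a_{P|P}$ and $(\one_0)_{\eta_s}$ depends only on $P, P'$, and that each factor is bounded by an absolute constant on the shifted contour once $q$ is sufficiently large. The former follows from the finite-dimensional decomposition~\eqref{BK:decomp} defining $a_{P|P}$ and the explicit Tate factorization of $(\one_0)_{\eta_s}$ through the Pl\"ucker embedding; the latter is a standard estimate on $(1 - u q^{-t})^{-1}$ for $\mathrm{Re}(t) \geq \varepsilon_0 > 0$ and $q$ large. With these uniform bounds in hand, the argument parallels Lemma~\ref{lem:esti} verbatim.
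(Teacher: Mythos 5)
Your proposal is correct and follows exactly the route the paper intends: the paper gives no separate argument for this lemma beyond the remark "arguing as in the proof of Lemma \ref{lem:esti}," i.e., Mellin inversion for $b_{Y_{P,P'}}$ using $(b_{Y_{P,P'}})_{\chi_s}=a_{P|P}(\chi_s)(\one_0)_{\chi_s}$, a contour shift to $\mathrm{Re}(s)=-\alpha$ justified by Lemma \ref{lem:holo}, and the identities \eqref{normbeta} and \eqref{ri}. Your additional attention to the $q$-uniformity of the bounds on the finitely many Tate-type factors is precisely the point the paper's remark after the lemma flags as essential, so the proposal matches the intended proof.
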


\begin{rem} The claim on the independence of the residual characteristic is important because we will require this result for all but finitely many places of a global field.  
\end{rem}

\begin{prop} \label{prop:basic:fixed}
One has $b_{Y_{P,P'}} \in \mathcal{S}(Y_{P,P'}(F)).$  Moreover $\mathcal{F}_{P|P^*}(b_{Y_{P,P'}})=b_{Y_{P^*,P'}}.$
\end{prop}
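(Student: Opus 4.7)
The plan is to construct an explicit extension of $b_{Y_{P,P'}}$ from $Y_{P}(F)$ to the full Braverman--Kazhdan space $X_{P}^{\circ}(F)$, verify directly that this extension lies in $\mathcal{C}_{\beta_0}(X_{P}(F))$ via a Gindikin--Karpelevich computation, and then use the same extension to establish the Fourier identity.

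First, let $\widetilde{\one}_0 \in C_c^{\infty}(X_{P}^{\circ}(F))$ be the characteristic function of the image of $G(\OO_F)$ in $X_{P}^{\circ}(F)$, and define $\widetilde{b}$ on $X_{P}^{\circ}(F)$ by Mellin inversion from $\widetilde{b}_{\chi_s} := a_{P|P}(\chi_s)(\widetilde{\one}_0)_{\chi_s}$. I claim $\widetilde{b}|_{Y_{P}(F)} = b_{Y_{P,P'}}$. It suffices to show that for $g \in Y(F)$ one has $g \in P^{\mathrm{der}}(F) G(\OO_F)$ if and only if $g \in P^{\mathrm{der}}(F) Y(\OO_F)$. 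The nontrivial direction: writing $g = pk$ with $p \in P^{\mathrm{der}}(F) \leq P'(F)$ and $k \in G(\OO_F)$, the $P'$-invariance of $Y$ forces $k = p^{-1}g \in Y(F)$, and $Y(F) \cap G(\OO_F) = Y(\OO_F)$ because $Y$ is the schematic closure of its generic fiber in $G$ over $\OO_F^S$ (a morphism $\mathrm{Spec}(\OO_F) \to G$ factors through such a closed subscheme precisely when its generic fiber does).

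Next I verify that $\widetilde{b} \in \mathcal{C}_{\beta_0}(X_{P}(F))$. It is right $K$-invariant, hence $K$-finite. For the good-section condition with $Q=P$, the section $\widetilde{b}_{\chi_s}/a_{P|P}(\chi_s) = (\widetilde{\one}_0)_{\chi_s}$ is the standard unramified spherical section of $I_{P}(\chi_s)$ and is thus holomorphic. For $Q=P^*$, I carry out a Gindikin--Karpelevich computation: by Lemma \ref{lem:der} we have $N_{P^* \cap M_{\beta_0}} = N_{P^*} \cap M_{\beta_0}$, so the integral defining $\mathcal{R}_{P|P^*}$ takes place entirely inside the simple factor $M_{\beta_0}$, and the calculation reduces to the classical spherical intertwining formula for the maximal parabolic $P \cap M_{\beta_0} \leq M_{\beta_0}$, namely
\begin{align*}
\mathcal{R}_{P|P^*}\bigl((\widetilde{\one}_0)_{\chi_s}\bigr) \;=\; \frac{a_{P|P^*}(\chi_s)}{a_{P|P}(\chi_s)}\,(\widetilde{\one}_0^{*})_{\chi_s}^{*},
\end{align*}
where $\widetilde{\one}_0^{*}$ is the spherical vector for $P^*$. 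Dividing $\mathcal{R}_{P|P^*}(\widetilde{b}_{\chi_s})$ by $a_{P|P^*}(\chi_s)$ yields the holomorphic spherical section $(\widetilde{\one}_0^{*})_{\chi_s}^{*}$, so $\widetilde{b}_{\chi_s}$ is good. Thus $b_{Y_{P,P'}} = \widetilde{b}|_{Y_{P}(F)} \in \mathcal{S}(Y_{P,P'}(F))$, proving the first assertion.

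For the Fourier identity, Theorem \ref{thm:FT} combined with the above gives
\begin{align*}
\mathcal{F}_{P|P^*}(\widetilde{b})_{\chi_s}^{*} \;=\; \mu_P(\chi_s)\,\mathcal{R}_{P|P^*}(\widetilde{b}_{\chi_s}) \;=\; \mu_P(\chi_s)\,a_{P|P^*}(\chi_s)\,(\widetilde{\one}_0^{*})_{\chi_s}^{*}.
\end{align*}
Applying the local functional equation relating $\gamma$-factors and $L$-factors to each summand of the decomposition $\widehat{\mathfrak{n}}^e_{P|P^*} = \oplus_i \mathcal{N}_i$ from \eqref{BK:decomp}, one obtains $\mu_P(\chi_s)\,a_{P|P^*}(\chi_s) = a_{P|P}(\chi_s)$. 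Combined with the symmetry $a_{P|P}(\chi_s) = a_{P^*|P^*}(\chi_s)$ noted in \S \ref{sec:twist}, this yields
\begin{align*}
\mathcal{F}_{P|P^*}(\widetilde{b})_{\chi_s}^{*} \;=\; a_{P^*|P^*}(\chi_s)(\widetilde{\one}_0^{*})_{\chi_s}^{*},
\end{align*}
which is the Mellin transform of the analogous extension $\widetilde{b}^{*}$ on $X_{P^*}^{\circ}(F)$. By Mellin inversion $\mathcal{F}_{P|P^*}(\widetilde{b}) = \widetilde{b}^{*}$, and restricting to $Y_{P^*}(F)$, then applying the first step with $P$ and $P^*$ interchanged, delivers $\mathcal{F}_{P|P^*}(b_{Y_{P,P'}}) = b_{Y_{P^*,P'}}$.

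The main technical obstacle is the Gindikin--Karpelevich calculation inside $M_{\beta_0}$, together with the bookkeeping needed to match $a_{P|P}$ and $a_{P|P^*}$ with the appropriate products of local $L$-factors so that the numerical identity $\mu_P(\chi_s) a_{P|P^*}(\chi_s) = a_{P|P}(\chi_s)$ comes out exactly. The reduction to $M_{\beta_0}$ afforded by Lemma \ref{lem:der} is what makes this tractable, since it converts the calculation to a familiar one for a maximal parabolic in a simple simply-connected group.
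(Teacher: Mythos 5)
Your proposal follows the paper's proof in all essentials: the paper likewise computes $\mathcal{R}_{P|P^*}$ on the unramified vector via the Gindikin--Karpelevich formula (reduced to $M_{\beta_0}$ exactly as you indicate, using the comparison of $\widehat{\mathfrak{n}}_{P|P^*}$ with $\widehat{\mathfrak{n}}_{P\cap M_{\beta_0}}$ from the proof of Proposition \ref{prop:iotay}), identifies the resulting ratio of local $L$-factors with $a_{P|P^*}(\chi_s)/a_{P|P}(\chi_s)$, and then applies $\mu_P(\chi_s)$ and Theorem \ref{thm:FT}. Your preliminary step exhibiting the explicit extension $\widetilde{b}$ to $X_P^\circ(F)$ and checking $P^{\mathrm{der}}(F)G(\OO_F)\cap Y(F)=P^{\mathrm{der}}(F)Y(\OO_F)$ is left implicit in the paper and is a reasonable thing to make explicit.

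There is, however, an error in the key numerical identity. With the definitions \eqref{agamma}, \eqref{muNchi}, \eqref{muN} and \eqref{ap}, for unramified data (where the $\varepsilon$-factors are trivial) one has
$\mu_P(\chi_s)\,a_{P|P^*}(\chi_s)=\prod_i L(1+s_i,(\chi_s\circ\lambda_i)^{-1})=a_{P|P}((\chi_s)^{-1})=a_{P^*|P^*}((\chi_s)^{-1})$,
not $a_{P|P}(\chi_s)$: the $\gamma$-factor $\gamma(-s_i,\chi_s\circ\lambda_i,\psi)$ trades $L(-s_i,\chi_s\circ\lambda_i)$ for $L(1+s_i,(\chi_s\circ\lambda_i)^{-1})$, so the character is inverted. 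The inversion is not cosmetic; for $\chi=\mathrm{triv}$ your right-hand side is $\prod_i\zeta_v(1+s_i+\lambda_i s_{\beta_0})$ while the correct value is $\prod_i\zeta_v(1+s_i-\lambda_i s_{\beta_0})$. It also matters for the final matching, because the $*$-Mellin transform on $Y_{P^*}$ is taken with respect to $\chi_s\circ\omega_P$ rather than $\chi_s\circ\omega_{P^*}$, and $\omega_{P^*}(\beta_0^\vee(t))=\omega_P(\beta_0^\vee(t))^{-1}$; the paper's conclusion is accordingly $\mathcal{F}_{P|P^*}(b_{Y_{P,P'}})^*_{\chi_s}=a_{P^*|P^*}((\chi_s)^{-1})(\one_0)^*_{\chi_s}$, which is the $*$-Mellin transform of $b_{Y_{P^*,P'}}$ under that convention. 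So your argument is structurally sound and reaches the correct conclusion once this bookkeeping is repaired, but as written the identity $\mu_P(\chi_s)a_{P|P^*}(\chi_s)=a_{P|P}(\chi_s)$ and the ensuing identification of $\mathcal{F}_{P|P^*}(\widetilde{b})^*_{\chi_s}$ with $a_{P^*|P^*}(\chi_s)(\widetilde{\one}_0^{*})^*_{\chi_s}$ are both off by the substitution $\chi_s\mapsto(\chi_s)^{-1}$.
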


\begin{proof}
The characters of $Z_{\widehat{M}}$ that appear in $\widehat{\mathfrak{n}}_{P|P^*}$ are all of the form $\lambda\beta_{0}^\vee$ with $\lambda \in \ZZ_{>0}$. Let
$$
V_{\lambda}=\widehat{\mathfrak{n}}_{P|P^*}(\lambda)=\widehat{\mathfrak{n}}_{M_{\beta_0} \cap P}(\lambda)
$$
be the $\lambda\beta_0^\vee$-isotypic space and let $$
r_{\lambda}:\widehat{M}_{\beta_0} \lto \mathrm{Aut}(V_{\lambda})
$$
be the corresponding representation.  It is irreducible \cite[Proposition 4.1]{Shahidi:Ramanujan} \cite{Langlands:EP}.  Let $\mathrm{triv}:(F^\times)^{\Delta_P} \to \CC^\times$ be the trivial character.  Recall
the comments on the relationship between $\widehat{\mathfrak{n}}_{P|P^*}$ and $\widehat{\mathfrak{n}}_{P \cap M_{\beta_0} }$ from the proof of Proposition \ref{prop:iotay}. 
Let $\pi$ be the trivial representation of $M_{\beta_0}(F)$.  The Gindikin-Karpelevic formula implies that 
\begin{align}
    \mathcal{R}_{P|P^*}((\one_{0})_{\mathrm{triv}_s})=(\one_{0})_{\mathrm{triv}_s}^*\prod_{\lambda} \frac{L(\lambda s_{\beta_0}, \pi,r^\vee_\lambda)}{L(1+\lambda s_{\beta_0},\pi,r^\vee_\lambda)}
\end{align}
where the product is over all $\lambda \in \ZZ_{\geq 1}$ such that $V_{\lambda} \neq 0$ \cite[(2.7)]{Shahidi:Ramanujan} \cite[Proposition 4.6]{Lai:Tama}.
Here the $L$-functions are Langlands $L$-functions and $r_\lambda^\vee$ is the dual of $r_\lambda.$ In more detail, $\pi$ determines a Langlands class $c \in \widehat{M}_{\beta_0}(\CC)$ by the Satake isomorphism, and 
$$
L(s,\pi,r_{\lambda}^\vee)=\det\left(I_{V_\lambda}-\frac{r_{\lambda}^\vee(c)}{q^{s}} \right)^{-1}.
$$
In fact, if $\sigma:\mathrm{SL}_2 \to \widehat{M}_{\beta_0}$ is a principal $\mathrm{SL}_2$ then
$c=\sigma\begin{psmatrix} q^{1/2} & \\ & q^{-1/2} \end{psmatrix}$ \cite[\S 7]{Gross:Sat}.

Consider  $\widehat{\mathfrak{n}}_{P|P^*}(\lambda).$  As a representation of a principal $\mathfrak{sl}_2$-triple in $\widehat{\mathfrak{m}}$ it decomposes into a direct sum of irreducible representations in natural bijection with the $\mathcal{N}_i$ in \eqref{BK:decomp} that appear in $\widehat{\mathfrak{n}}^e_{P|P^*}(\lambda).$  The dimension of the corresponding irreducible representation is $2s_i+1,$ where $2s_i$ is the $h$-eigenvalue on $\mathcal{N}_i$ as above.  We conclude that 
\begin{align*}
    \frac{L(\lambda s_{\beta_0},\pi,r_\lambda^\vee)}{L(1+\lambda s_{\beta_0},\pi,r_\lambda^\vee)}&=\prod_{i} \left(\frac{1-q^{-1-s_i-\lambda s_{\beta_0}}}{1-q^{-s_i-\lambda s_{\beta_0}}}\frac{1-q^{-s_i-\lambda s_{\beta_0}}}{1-q^{1-s_i-\lambda s_{\beta_0}}} \cdots \frac{1-q^{-1+s_i-\lambda s_{\beta_0}})}{1-q^{s_i-\lambda s_{\beta_0}}}\right)\\
    &=\prod_{i} \frac{1-q^{-1-s_i-\lambda s_{\beta_0}}}{1-q^{s_i-\lambda s_{\beta_0}}}
\end{align*}
where the product is over $\mathcal{N}_i$ in \eqref{BK:decomp} that appear in $\widehat{\mathfrak{n}}^e_{P|P^*}(\lambda).$ 
Thus
\begin{align} \label{relate:0}
\prod_{\lambda} \frac{L(\lambda s_{\beta_0},\pi,r_\lambda^\vee)}{L(1+\lambda s_{\beta_0},\pi,r_\lambda^\vee)}=\frac{a_{P|P^*}(\mathrm{triv}_s)}{a_{P|P}(\mathrm{triv}_s)}.
\end{align}
We deduce for all unramified $\chi:F^\times \to \CC^\times$ that 
\begin{align}
    \mathcal{R}_{P|P^*}((\one_{0})_{\chi_s})=\frac{a_{P|P^*}(\chi_s)}{a_{P|P}(\chi_s)}(\one_{0})_{\chi_s}^*.
\end{align}
It follows immediately that $b_{Y_{P,P'}} \in \mathcal{S}(Y_{P,P'}(F)).$  
For unramified $\chi$ 
\begin{align}
    \mu_P(\chi_s)=\frac{a_{P^*|P^*}((\chi_s)^{-1})}{a_{P|P^*}(\chi_s)}  
\end{align}
where $\mu_P(\chi_s)$ is defined as in \eqref{muPs}.
Hence 
\begin{align}
    \mu_P(\chi_s) \mathcal{R}_{P|P^*}((\one_{0})_{\chi_s})=\frac{a_{P^*|P^*}((\chi_s)^{-1})}{a_{P|P}(\chi_s)}(\one_{0})_{\chi_s}^*.
\end{align}
Applying Theorem \ref{thm:FT} we have
\begin{align}
    \mathcal{F}_{P|P^*}(b_{Y_{P,P'}})_{\chi_s}^*=a_{P|P}(\chi_s)\mu_P(\chi_s) \mathcal{R}_{P|P^*}((\one_{0})_{\chi_s})=a_{P^*|P^*}((\chi_s)^{-1})(\one_{0})^*_{\chi_s}
\end{align}
Combining this with Mellin inversion we have $\mathcal{F}_{P|P^*}(b_{Y_{P,P'}})=b_{Y_{P^*,P'}}.$
\end{proof}

Let $\varpi$ be a uniformizer for $F$.  Recall the left translation operator $L$ from \eqref{Sch:act}.  For our use in the proof of Theorem \ref{thm:ES:intro} below we require the following lemma:

\begin{lem} \label{lem:Macts}
Let $(\alpha,\lambda) \in \CC \times \ZZ_{\neq 0}.$  For any $f \in \mathcal{S}(Y_{P,P'}(F))^{M(\OO_F) \times H(\OO_F)}$ 
$$
\left(\mathrm{Id}-\frac{\alpha}{\delta_P^{1/2}(\beta_0^{\vee}(\varpi^{\lambda}))}L(\beta_0^{\vee}(\varpi^{-\lambda}))\right)f \in \mathcal{S}(Y_{P,P'}(F))^{M(\OO_F) \times H(\OO_F)}
$$
and 
$$
\left(\left(\mathrm{Id}-\frac{\alpha}{\delta_P^{1/2}(\beta_0^{\vee}(\varpi^{\lambda}))}L(\beta_0^{\vee}(\varpi^{-\lambda}))\right)f\right)_{\chi_s}=(1-\alpha\chi_{\beta_0}(\varpi^{\lambda}))f_{\chi_s}.
$$\qed
\end{lem}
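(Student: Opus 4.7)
The plan is to handle the two assertions of the lemma separately.

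For the first assertion, that $(\mathrm{Id}-\tfrac{\alpha}{\delta_P^{1/2}(\beta_0^\vee(\varpi^\lambda))}L(\beta_0^\vee(\varpi^{-\lambda})))f$ lies in $\mathcal{S}(Y_{P,P'}(F))^{M(\OO_F)\times H(\OO_F)}$, preservation of the Schwartz space is immediate from the action of $M^{\mathrm{ab}}(F)\times H(F)$ on $\mathcal{S}(Y_{P,P'}(F))$ established in \S\ref{sec:twist}. To verify invariance, recall that since $M^{\mathrm{ab}}$ is commutative, the left translation $L(\beta_0^\vee(\varpi^{-\lambda}))$ commutes with left translation by the image of $M(\OO_F)$ in $M^{\mathrm{ab}}(F)$; it commutes with right translation by $H(\OO_F)$ because left and right translations commute. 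Hence $L(\beta_0^\vee(\varpi^{-\lambda}))$ preserves the space of $M(\OO_F)\times H(\OO_F)$-invariants, and the first claim follows.

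For the second assertion, I would unwind the definition of the Mellin transform \eqref{Mellin} and perform a direct change of variables. Using the definition of $L$,
\begin{align*}
(L(\beta_0^\vee(\varpi^{-\lambda}))f)_{\chi_s}(y)
=\int_{M^{\mathrm{ab}}(F)}\delta_P^{1/2}(m)\chi_s(\omega_P(m))f(\beta_0^\vee(\varpi^\lambda)m^{-1}y)\,dm.
\end{align*}
Since $M^{\mathrm{ab}}$ is abelian (Lemma \ref{lem:surj}), the substitution $m\mapsto m\beta_0^\vee(\varpi^\lambda)$ is valid and pulls the $\beta_0^\vee(\varpi^\lambda)$ into the $m$-variable, giving
\begin{align*}
(L(\beta_0^\vee(\varpi^{-\lambda}))f)_{\chi_s}(y)
=\delta_P^{1/2}(\beta_0^\vee(\varpi^\lambda))\,\chi_s(\omega_P(\beta_0^\vee(\varpi^\lambda)))\,f_{\chi_s}(y).
\end{align*}
By the defining property \eqref{omegabeta} of the fundamental weights, $\omega_\beta(\beta_0^\vee)=\delta_{\beta,\beta_0}$ for $\beta\in\Delta_P$, so $\chi_s(\omega_P(\beta_0^\vee(\varpi^\lambda)))=\chi_{\beta_0}(\varpi^\lambda)$ in the notational conventions of the lemma statement. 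Dividing through by $\delta_P^{1/2}(\beta_0^\vee(\varpi^\lambda))$ and subtracting $\alpha$ times the resulting identity from $f_{\chi_s}$ yields the stated formula. The identity is first established for $\mathrm{Re}(s_{\beta_0})$ large enough that all integrals in \eqref{Mellin} converge absolutely, and then extends by the meromorphic continuation built into the Schwartz space.

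The argument is essentially bookkeeping: once the abelianness of $M^{\mathrm{ab}}$ permits the change of variables and $\omega_\beta(\beta_0^\vee)=\delta_{\beta,\beta_0}$ is invoked, both assertions drop out immediately. There is no genuine obstacle; the only point requiring mild care is reconciling the notation $\chi_{\beta_0}(\varpi^\lambda)$ in the conclusion with $\chi_{s,\beta_0}(\varpi^\lambda)$ that appears naturally from the computation.
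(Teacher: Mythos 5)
Your proof is correct; the paper states this lemma with no proof at all (it is \qed'd as immediate), and your change-of-variables computation in the Mellin transform, together with $\omega_\beta(\beta_0^\vee)=\delta_{\beta,\beta_0}$, is exactly the intended verification. Your closing remark is also right: the factor produced is $(\chi_s)_{\beta_0}(\varpi^\lambda)=\chi_{\beta_0}(\varpi^\lambda)q^{-\lambda s_{\beta_0}}$, and the paper's notation $\chi_{\beta_0}(\varpi^\lambda)$ must be read as including the $|\cdot|^{s_{\beta_0}}$ twist, as is clear from how the lemma is used to isolate residues at individual points $s_0$.
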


\subsection{The adelic setting}
Now let $F$ be a global field.  
 We let
\begin{align}
\mathcal{S}(Y_{P,P'}(\A_F))=\widehat{\otimes}_{v|\infty}\mathcal{S}(Y_{P,P'}(F_v)) \otimes \otimes'_{v \nmid \infty}\mathcal{S}(Y_{P,P'}(F_v))
\end{align}
where the restricted direct product is taken with respect to the basic functions 
of \eqref{basic:func}.  Here when $F$ is a number field the hat denotes the projective topological tensor product and when $F$ is a function field it is the algebraic tensor product.  The tensor product of the local Fourier transforms induces an isomorphism
\begin{align}
    \mathcal{F}_{P|P^*}:\mathcal{S}(Y_{P,P'}(\A_F)) \lto \mathcal{S}(Y_{P^*,P'}(\A_F)).  
\end{align}
Here we are using Proposition \ref{prop:basic:fixed}.

The following is the global analogue of Proposition \ref{prop:iotay}:
\begin{prop} \label{prop:iotay:glob}
For each $y \in Y(\A_F)$
one has a map
\begin{align*}
\iota_y^*:\mathcal{S}(Y_{P,P'}(\A_F)) &\lto \mathcal{S}(X_{P \cap M_{0}}(\A_F))\\
f &\longmapsto f \circ \iota_y
\end{align*}
that fits into a commutative diagram
\begin{equation*}
    \begin{tikzcd}[column sep=huge]
    \mathcal{S}(Y_{P,P'}(\A_F)) \arrow[r,"\mathcal{F}_{P|P^*}"] \arrow[d,"\iota_y^*"] & \mathcal{S}(Y_{P^*,P'}(\A_F)) \arrow[d,"\iota_y^*"]\\
    \mathcal{S}(X_{P \cap M_{\beta_0}}(\A_F)) \arrow[r,"\mathcal{F}_{P \cap M_{\beta_0}|P^* \cap M_{\beta_0}}"] &\mathcal{S}(X_{P^* \cap M_{\beta_0}}(\A_F)).
    \end{tikzcd}
\end{equation*}
\end{prop}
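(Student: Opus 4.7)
The plan is to deduce this global statement from its local counterpart (Proposition \ref{prop:iotay}) by leveraging the definition of the adelic Schwartz space as a (completed) restricted tensor product of the local Schwartz spaces.

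First, I would verify that $\iota_y^*$ is well-defined on pure tensors and extends to the full (restricted) tensor product. Write $y=(y_v)_v$; since $y \in Y(\A_F)$, for all but finitely many (necessarily nonarchimedean) places $v$, the component $y_v$ lies in $Y(\OO_{F_v})$. By Lemma \ref{lem:iotay:basic}, at each such place the local map $\iota_{y_v}^*$ sends the basic function $b_{Y_{P,P'}}$ of $Y_{P,P'}(F_v)$ to the basic function $b_{X_{P \cap M_{\beta_0}}}$ of $X_{P\cap M_{\beta_0}}(F_v)$. This is exactly the compatibility required so that the algebraic tensor product of the local maps $\iota_{y_v}^*$ descends to a well-defined linear map between the nonarchimedean restricted tensor products.

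Second, at the archimedean places, Proposition \ref{prop:iotay} gives that each $\iota_{y_v}^*$ is continuous between Fr\'echet spaces. By the universal property of the projective tensor product, the finite tensor product $\otimes_{v\mid\infty} \iota_{y_v}^*$ extends uniquely to a continuous linear map
\[
\widehat{\otimes}_{v\mid\infty}\mathcal{S}(Y_{P,P'}(F_v)) \lto \widehat{\otimes}_{v\mid\infty}\mathcal{S}(X_{P \cap M_{\beta_0}}(F_v)).
\]
Combining the two paragraphs yields the required map $\iota_y^*: \mathcal{S}(Y_{P,P'}(\A_F)) \to \mathcal{S}(X_{P \cap M_{\beta_0}}(\A_F))$.

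Finally, to obtain commutativity of the global diagram I would apply the local commutative diagram of Proposition \ref{prop:iotay} place-by-place. The global Fourier transforms $\mathcal{F}_{P|P^*}$ and $\mathcal{F}_{P \cap M_{\beta_0}|P^* \cap M_{\beta_0}}$ are defined as the (completed) restricted tensor products of the local Fourier transforms, the restricted tensor product being well-defined thanks to Proposition \ref{prop:basic:fixed}. The identity $\iota_y^* \circ \mathcal{F}_{P|P^*} = \mathcal{F}_{P \cap M_{\beta_0}|P^* \cap M_{\beta_0}} \circ \iota_y^*$ therefore holds on pure tensors by the local commutativity, and extends to the whole adelic Schwartz space by continuity in the archimedean factors and by linearity in the nonarchimedean factors. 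The only nontrivial input is Lemma \ref{lem:iotay:basic}; the remaining obstacles, such as checking that the projective tensor product of continuous maps extends as claimed, are formal.
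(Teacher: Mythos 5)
Your proposal is correct and follows the same route as the paper: the paper's proof likewise reduces the global statement to the local Proposition \ref{prop:iotay} together with the basic-function compatibility of Lemma \ref{lem:iotay:basic}, using that $y_v$ is integral at almost all places. You have merely spelled out the formal restricted/projective tensor product bookkeeping that the paper leaves implicit.
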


\begin{proof}
Let $K=\prod_vK_v \leq G(\A_F)$ be a maximal compact subgroup.
The element $y=(y_v) \in Y_{P}(\A_F)$ has the property that $y_v \in K_v$ for almost all $v.$  Thus the proposition follows from its local analogue Proposition \ref{prop:iotay} and the corresponding statement for basic functions in Lemma \ref{lem:iotay:basic}.
\end{proof}

\begin{lem} \label{lem:Mellin:conv}
For $f \in \mathcal{S}(Y_{P,P'}(\A_F))$ (resp.~$f \in \mathcal{S}(Y_{P^*,P'}(\A_F))$ the integrals defining $f_{\chi_s}$ (resp.~$f_{\chi_s}^*$) converge absolutely for $\mathrm{Re}(s_{\beta_0})$ sufficiently large (resp.~sufficiently small). 
\end{lem}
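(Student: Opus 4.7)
The plan is to reduce absolute convergence of the adelic Mellin integral to a standard adelic Tate-type computation, using the pointwise bounds of Lemma~\ref{lem:esti} and Lemma~\ref{lem:unram:esti} to produce a global Schwartz majorant for $|f|$. First I would reduce to the case where $f = \otimes_v f_v$ is a pure tensor with $f_v = b_{Y_{P,P'}}$ for all $v$ outside a finite set $S$ of places containing the archimedean ones; the reduction uses the continuity of $f \mapsto \Phi_{f,\alpha}$ asserted in Lemma~\ref{lem:esti} together with the restricted tensor product structure of $\mathcal{S}(Y_{P,P'}(\A_F))$. Combining Lemma~\ref{lem:esti} at the places in $S$ with Lemma~\ref{lem:unram:esti} at the remaining places then produces, for $\alpha > 0$ sufficiently small, a nonnegative Schwartz function $\Phi \in \mathcal{S}(V_{\beta_0}(\A_F) \times V_{\Delta_{P'}}^\circ(\A_F))$ satisfying
$$
|f(x)| \leq \Phi(\mathrm{Pl}_P(x)) \prod_{\beta \in \Delta_P} |\mathrm{Pl}_{P_\beta}(x)|_\beta^{\alpha - r_\beta}
$$
for all $x \in Y_P(\A_F)$.

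Substituting this bound into the Mellin integrand and applying the equivariance $\mathrm{Pl}_{P_\beta}(m^{-1}x) = \omega_\beta(m) \mathrm{Pl}_{P_\beta}(x)$ from Lemma~\ref{lem:Plucker:stuff}, the identity $\delta_P^{1/2}(m) = \prod_\beta |\omega_\beta(m)|^{r_\beta}$ from \eqref{ri}, and the measure-preserving isomorphism $\omega_P : M^{\mathrm{ab}}(\A_F) \to (\A_F^\times)^{\Delta_P}$, I would change variables $a_\beta = \omega_\beta(m)$. Writing $\chi_\beta = \chi_\beta^{(0)}|\cdot|^{c_\beta}$ with $\chi_\beta^{(0)}$ unitary, the majorant for $|f_{\chi_s}(x)|$ becomes $\prod_\beta |\mathrm{Pl}_{P_\beta}(x)|_\beta^{\alpha - r_\beta}$ times
$$
\int_{(\A_F^\times)^{\Delta_P}} \Phi\bigl((a_\beta \mathrm{Pl}_{P_\beta}(x))_\beta\bigr) \prod_{\beta \in \Delta_P} |a_\beta|^{\alpha + \mathrm{Re}(s_\beta) + c_\beta} \prod_\beta d^\times a_\beta .
$$

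For each $\beta \in \Delta_P - \{\beta_0\}$, the function $\Phi$ is Schwartz in the quasi-affine coordinate $V_\beta^\circ(\A_F)$ and hence rapidly decaying both at infinity and as $|a_\beta| \to 0$, so the $a_\beta$-integral converges unconditionally. For $\beta = \beta_0$, $\Phi$ has ordinary Schwartz behavior on $V_{\beta_0}(\A_F)$, and for the fixed nonzero vector $y = \mathrm{Pl}_{P_{\beta_0}}(x)$ the adelic Tate integral $\int_{\A_F^\times} \Phi_0(ay) |a|^{\alpha + \mathrm{Re}(s_{\beta_0}) + c_{\beta_0}} d^\times a$ converges near $|a| \to 0$ exactly when $\alpha + \mathrm{Re}(s_{\beta_0}) + c_{\beta_0} > 0$, that is, when $\mathrm{Re}(s_{\beta_0})$ exceeds an explicit constant depending only on $\chi$. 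The parallel statement for $f \in \mathcal{S}(Y_{P^*,P'}(\A_F))$ is obtained by interchanging the roles of $P$ and $P^*$: the $\beta_0$-component of $\delta_{P^*}^{1/2}$ has exponent opposite to that of $\delta_P^{1/2}$, so the convergence condition dualizes to $\mathrm{Re}(s_{\beta_0})$ sufficiently small.

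The main obstacle is the passage from the local Schwartz majorants furnished by Lemma~\ref{lem:esti} to a genuine global Schwartz majorant in $\mathcal{S}(V_{\beta_0}(\A_F) \times V_{\Delta_{P'}}^\circ(\A_F))$. This assembly rests crucially on the uniformity-in-residue-characteristic built into Lemma~\ref{lem:unram:esti}; without a common $\alpha$ that works for all but finitely many places, the basic-function bounds at unramified places would fail to combine into an honest adelic Schwartz bound, and the adelic Tate analysis above would not apply.
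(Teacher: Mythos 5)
Your proposal is correct and follows exactly the route the paper takes: its proof of this lemma is a one-line citation of Lemmas \ref{lem:esti} and \ref{lem:unram:esti}, and your argument is the fleshed-out version of that citation (assemble the global Schwartz majorant on $V_{\beta_0}(\A_F)\times V_{\Delta_{P'}}^\circ(\A_F)$, then reduce via $\omega_P$ and \eqref{ri} to adelic Tate-type integrals, with the $\beta_0$-coordinate alone imposing the condition on $\mathrm{Re}(s_{\beta_0})$). The details you supply, including the role of the uniform $\alpha$ in Lemma \ref{lem:unram:esti} and the dualization for $P^*$, match the paper's intended argument as it appears explicitly in the proofs of Lemmas \ref{lem:abs:conv} and \ref{lem:convergence}.
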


\begin{proof}
This follows from the estimates in lemmas \ref{lem:esti} and \ref{lem:unram:esti}.
\end{proof}

Lemma \ref{lem:Mellin:conv} implies that the Mellin transforms \eqref{Mellin} define maps
\begin{align*}
(\cdot)_{\chi_s}:=(\cdot)_{\chi_s,P}:\mathcal{S}(Y_{P,P'}(\A_F)) &\lto I_P(\chi_s)|_{Y_{P}(\A_F)} \\
 (\cdot)_{\chi_s}^*:=(\cdot)_{\chi_s,P^*}^*:\mathcal{S}(Y_{P^*,P'}(\A_F)) &\lto I_{P^*}^*(\chi_s)|_{Y_{P^*}(\A_F)}
\end{align*}
for $\mathrm{Re}(s_\beta)$ sufficiently large (resp.~sufficiently small).
These Mellin transforms will be used in the following sections.

%%%%%%%%%%%%%%%%%%%%%%%%%%%%%%%%%%%%%%%%%%%%%%%%%%%%%%%%%%%%%%%%%%%%%%%%%%%%%
\section{The Poisson summation formula on $X_{P \cap M_{\beta_0}}(F)$} \label{sec:PS:BK}

The Poisson summation formula on $X_{P \cap M_{\beta_0}}(F)$ was established under some local assumptions on the test functions involved in \cite{BK:normalized} with a slightly different definition of the Schwartz space.  In this section we establish it in general following the arguments of \cite{Getz:Liu:BK}. 

To ease notation, for this section only we assume that $P \leq M_{\beta_0},$ which implies $M_{\beta_0}=G.$  This amounts to assuming that $G$ is simple and $P$ is a maximal parabolic subgroup.  Thus, letting $Y=G$, we have
$$
X_{P \cap M_{\beta_0}}=X_P=Y_{P,G}.
$$
The construction of the Schwartz space and the Fourier transform given in the previous section reduces to the construction of \cite{Getz:Hsu:Leslie} in this case.  
We observe that $\Delta_P=\{\beta_0\}$ in the setting of this section.  Thus we drop $\Delta_P$ and $\beta_0$ from notation when no confusion is likely.

For a quasi-character $\chi:A_{\GG_m} F^\times \backslash \A_F^\times \to \CC^\times,$ $s \in \CC$, 
$f_1 \in \mathcal{S}(X_P(\A_F))$ and $f_2 \in \mathcal{S}(X_{P^*}(\A_F))$ we have  degenerate Eisenstein series
\begin{align} \label{ES} \begin{split}
    E(g,f_{1\chi_s}):&=\sum_{\gamma \in P(F) \backslash G(F)}f_{1\chi_s}(\gamma g)\\
    E^*(g,f^*_{2\chi_s}):&=\sum_{\gamma \in P^*(F) \backslash G(F)}f^*_{2\chi_s}(\gamma g). \end{split}
\end{align}
Here $\chi_s=\chi|\cdot|^s$.
It is well-known that these converge absolutely for $\mathrm{Re}(s)$ large enough (resp.~small enough).  For the proof of absolute convergence in a special case see \cite[Lemma 6.5]{Getz:Liu:BK}; the proof generalizes to our setting.  

Let
\begin{align} \label{aPP}
a_{P|P}(\chi_s):=\prod_v a_{P|P}(\chi_{vs}).
\end{align}  

\begin{lem} \label{lem:holo2} Let $\chi \in \widehat{A_{\GG_m} F^\times \backslash \A_F^\times}$ .
The function $a_{P|P}(\chi_s)$ is holomorphic and nonzero for $\mathrm{Re}(s)> 0.$  It admits a meromorphic continuation to the plane. Moreover there is an integer $n$ depending only on $G$ such that $a_{P|P}(\chi_s)$ is holomorphic if $\chi^n \neq 1.$  
\end{lem}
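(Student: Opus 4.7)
The plan is to unwind the definition of $a_{P|P}(\chi_s)$ into an explicit product of completed global Hecke $L$-functions of characters of the form $\chi^{n_i}$, and then cite classical analytic properties of such $L$-functions.

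First I would expand the definition. By \eqref{ap}, $a_{P|P}(\chi_s) = a_{\widetilde{\widehat{\mathfrak{n}}_{P|P^*}^e}}((\chi_s)^{-1})$. Decompose $\widehat{\mathfrak{n}}_{P|P^*}^e = \bigoplus_i \mathcal{N}_i$ as in \eqref{BK:decomp}, so that $Z_{\widehat{M}}$ acts on $\mathcal{N}_i$ by a character which (by the remark before \eqref{muN}) is an integer multiple $n_i \beta_0^\vee$ of the coroot $\beta_0^\vee$, and $\mathcal{N}_i$ carries a real parameter $s_i$. As already observed in the proof of Lemma \ref{lem:holo}, in our setting one has $s_i \geq 0$ and $n_i \geq 1$ for every $i$. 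Unwinding \eqref{agamma} together with the definition of $\widetilde{\mathcal{N}}$ and taking the product over all places, one finds
\begin{equation*}
a_{P|P}(\chi_s) \;=\; \prod_i L\bigl(1 + s_i + n_i s,\; \chi^{n_i}\bigr),
\end{equation*}
where the $L$-functions on the right are completed global Hecke $L$-functions attached to the Gr\"ossencharacters $\chi^{n_i}$ on $A_{\GG_m} F^\times \backslash \A_F^\times$. The finite index set $\{(s_i,n_i)\}_i$ depends only on $G$ (and on the choice of $P$, but here $P$ is determined up to conjugacy by $G$ since we have reduced to $G$ simple and $P$ maximal).

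Second, for $\mathrm{Re}(s) > 0$ we have $\mathrm{Re}(1 + s_i + n_i s) > 1$ for every $i$, so each factor lies in the region of absolute convergence of its Euler product and is therefore holomorphic and nonzero. This gives the first assertion. The meromorphic continuation of $a_{P|P}(\chi_s)$ then follows from the meromorphic continuation of Hecke $L$-functions, each of which is entire when the underlying character is nontrivial and has at worst simple poles at the two standard points when it is trivial.

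Finally, let $n$ be the least common multiple of the positive integers $n_i$; this integer depends only on $G$. If $\chi^n \neq 1$, then $\chi$ does not have finite order dividing $n$, and hence $\chi^{n_i} \neq 1$ for every $i$, so every factor $L(1 + s_i + n_i s, \chi^{n_i})$ is entire and the product is holomorphic on all of $\CC$. The only real step requiring care is the first one: matching the conventions in the paper (duals, cocharacters, and the normalization implicit in $a_{\mathcal{N}}(\chi_s) = L(-s', \chi_s \circ \lambda)$) with the standard normalization of Hecke $L$-functions, so that the exponents $n_i$ appearing really are positive and the shifts $1 + s_i$ really put us in the region of absolute convergence when $\mathrm{Re}(s) > 0$; once that bookkeeping is done the rest is immediate.
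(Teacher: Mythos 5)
Your proposal is correct and follows exactly the paper's (much terser) argument: the paper likewise invokes the positivity observations $s_i\geq 0$, $n_i\geq 1$ from the proof of Lemma \ref{lem:holo} for the first claim, and then notes that $a_{P|P}(\chi_s)$ is a product of completed Hecke $L$-functions $L(1+s_i+n_is,\chi^{n_i})$, from which the continuation and the holomorphy for $\chi^n\neq 1$ (with $n$ the lcm of the $n_i$) are immediate. Your version simply makes the bookkeeping explicit; there is nothing to correct.
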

\begin{proof}
The first claim follows from the same remarks proving Lemma \ref{lem:holo}.
Since $a_{P|P}(\chi_s)$ is a product of (completed) Hecke $L$-functions the second two assertions are clear. 
\end{proof}

\begin{thm}[Langlands] \label{thm:ES:Langlands}
Let $f \in \mathcal{S}(X_P(\A_F))$ be finite under a maximal compact subgroup of $G(\A_F).$  
For a character $\chi:A_{\GG_m} F^\times \backslash \A_F^\times \to \CC^\times$
the Eisenstein series $E(g,f_{\chi_s})$ has a meromorphic continuation to the $s$-plane and admits a functional equation
\begin{align}
    E(g,f_{\chi_s})=E^*(g,\mathcal{F}_{P|P^*}(f)_{\chi_s}^*).
\end{align}
If $\mathrm{Re}(z)=0$ then the order of the pole of $E(g,f_{\chi_s})$  at $s=z$ is bounded by the order of the pole of $a_{P|P}(\chi_s)$ at $s=z.$
\end{thm}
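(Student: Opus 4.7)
The plan is to reduce to the classical Langlands theory of Eisenstein series for standard $K$-finite sections, then match the unnormalized global intertwining operator to $\mathcal{F}_{P|P^*}$ via Theorem \ref{thm:FT}. For $\mathrm{Re}(s)$ large, $f_{\chi_s}$ is a well-defined $K$-finite section of $I_P(\chi_s)$ by Lemma \ref{lem:Mellin:conv}, and $E(g,f_{\chi_s})$ converges absolutely. The $K$-finiteness of $f$ forces the family $\{f_{\chi_s}|_K\}_s$ to lie in a fixed finite-dimensional subspace of $C^\infty(K)$; choosing a basis $\{\phi_i\}$ of this subspace and letting $\Phi_i^{\chi_s}$ be the unique standard $K$-finite sections with $\Phi_i^{\chi_s}|_K=\phi_i$, I would write
\[
f_{\chi_s}=\sum_i c_i(s)\Phi_i^{\chi_s}
\]
with coefficients $c_i(s)$ meromorphic, the good section condition in the definition of $\mathcal{S}(X_P(\A_F))$ guaranteeing that each $c_i(s)/a_{P|P}(\chi_s)$ is holomorphic in $s$.

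Applying the classical theorem of Langlands to each standard $K$-finite section $\Phi_i^{\chi_s}$, one obtains meromorphic continuation of $E(g,\Phi_i^{\chi_s})$ and the functional equation $E(g,\Phi_i^{\chi_s})=E^*(g,\mathcal{M}_{P|P^*}(\chi_s)\Phi_i^{\chi_s})$, where $\mathcal{M}_{P|P^*}(\chi_s)$ denotes the global unnormalized intertwining operator (initially convergent in the usual cone and extended by meromorphic continuation). Linearity in the section gives meromorphic continuation of $E(g,f_{\chi_s})$ together with the analogous identity $E(g,f_{\chi_s})=E^*(g,\mathcal{M}_{P|P^*}(\chi_s)f_{\chi_s})$. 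By Theorem \ref{thm:FT} applied place-by-place, $\mathcal{F}_{P|P^*}(f)_{\chi_s}^*=\mu_P(\chi_s)\mathcal{R}_{P|P^*}(f_{\chi_s})$, and the global product $\prod_v\mu_P(\chi_{vs})=1$ — a consequence of the functional equations of the completed Hecke $L$-functions making up the $\gamma$-factors in $\mu_P$ — identifies $\mathcal{M}_{P|P^*}(\chi_s)f_{\chi_s}$ with $\mathcal{F}_{P|P^*}(f)_{\chi_s}^*$, producing the stated functional equation.

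For the pole bound on $\mathrm{Re}(z)=0$, the good section condition globally yields $f_{\chi_s}=a_{P|P}(\chi_s)h_{\chi_s}$ with $h_{\chi_s}$ a holomorphic family of $K$-finite sections, so $E(g,f_{\chi_s})=a_{P|P}(\chi_s)E(g,h_{\chi_s})$. Using the symmetry $a_{P^*|P^*}(\chi_s)=a_{P|P}(\chi_s)$ together with the functional equation just proved gives the normalized identity $E(g,h_{\chi_s})=E^*(g,h^*_{\chi_s})$ with $h^*_{\chi_s}$ likewise holomorphic. The potential poles of this normalized identity on $\mathrm{Re}(s)=0$ arise from the intertwining operator applied to $h_{\chi_s}$; by the good section condition they are controlled by $a_{P|P^*}(\chi_s)/a_{P|P}(\chi_s)$, a quotient which standard estimates on the relevant Hecke $L$-factors show is holomorphic near $\mathrm{Re}(s)=0$. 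Hence $E(g,h_{\chi_s})$ is holomorphic on that line, and the pole order of $E(g,f_{\chi_s})$ at $s=z$ is bounded by that of $a_{P|P}(\chi_s)$.

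The main technical obstacle is that $f_{\chi_s}$ is not generally standard — ruling out a direct appeal to Lapid's refinements \cite{Lapid:Remark} — but $K$-finiteness together with the good section condition provide exactly the structure needed to reduce to the classical theory while preserving the pole information required for the final assertion.
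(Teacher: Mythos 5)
Your treatment of the meromorphic continuation and the functional equation is essentially the paper's argument: decompose the holomorphic $K$-finite section $f_{\chi_s}$ over standard $K$-finite sections with meromorphic coefficients, invoke the classical Langlands theory for those, and then identify $\mathcal{R}_{P|P^*}(f_{\chi_s})$ with $\mathcal{F}_{P|P^*}(f)^*_{\chi_s}$ using $\prod_v \mu_P(\chi_{vs})=1$, i.e.\ the global functional equation of the Hecke $L$-functions appearing in the local $\gamma$-factors. The paper packages that last identification as a citation of Theorem \ref{thm:FT} together with \cite[Lemma 6.2]{Getz:Liu:BK}, and routes the appeal to the classical theory through the standard parabolic $Q$ conjugate to $P^*$ via the map $j$, but these are presentational differences only.

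The gap is in your proof of the pole bound on $\mathrm{Re}(s)=0$. Having written $f_{\chi_s}=a_{P|P}(\chi_s)h_{\chi_s}$ with $h_{\chi_s}$ a holomorphic $K$-finite section (correct, and this is what the paper does), you need $E(g,h_{\chi_s})$ to be holomorphic on the unitary axis. You argue that its potential poles there ``arise from the intertwining operator applied to $h_{\chi_s}$'' and are controlled by $a_{P|P^*}(\chi_s)/a_{P|P}(\chi_s)$. Two problems. First, passing from holomorphy of the constant term to holomorphy of the Eisenstein series is itself a substantial theorem of the Langlands theory; it does not follow from the functional equation you just proved, so as written the step is either circular or an appeal to an uncited result. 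Second, the holomorphy of the Gindikin-Karpelevic ratio $a_{P|P^*}(\chi_s)/a_{P|P}(\chi_s)$ near $\mathrm{Re}(s)=0$ is asserted via ``standard estimates'' but genuinely needs checking: both numerator and denominator can have poles at $s=0$ (coming from the summands $\mathcal{N}_i$ with $h$-eigenvalue $0$ and $\chi^{\lambda_i}=1$), and one must verify that the orders cancel. The clean route, and the one the paper takes, is to quote directly the theorem that Eisenstein series attached to holomorphic $K$-finite sections are holomorphic on the unitary axis (\cite[Theorem 7.2]{ArthurIntro}, \cite{Langlands:FE}); then $E(g,f_{\chi_s})=a_{P|P}(\chi_s)E(g,h_{\chi_s})$ immediately gives the stated bound.
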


\begin{proof}
To ease translation with the manner the theory is usually phrased, let $Q$ be the unique parabolic subgroup containing $B$ that is conjugate to $P^*,$ and let $M_Q$ be the Levi subgroup of $Q$ containing $T.$  Choose $w \in G(F)$ so that $w P^*w^{-1}=Q$ and  $wMw^{-1}=M_Q.$ There is an isomorphism
\begin{align*} 
    j:C^\infty(X_{P^*}^\circ(F)) &\tilde{\lto} C^\infty(X_Q^\circ(F))\\
    f &\tilde{\longmapsto} (x \mapsto f(w^{-1}x)). 
\end{align*}  For suitable sections $\Phi^{\chi_s} \in  I_Q(\chi_s)$ and $g \in G(\A_F)$ we can then form the Eisenstein series
\begin{align}
E_Q(g,\Phi^{\chi_s})=\sum_{\gamma  \in Q(F) \backslash G(F)}\Phi^{\chi_s}(\gamma g).
\end{align}  
Let $K$ be a maximal compact subgroup of $G(\A_F)$.  If $\Phi^{\chi_s}$ is $K$-finite and holomorphic for $\mathrm{Re}(s)$ sufficiently large, then  $E_Q(g,\Phi^{\chi_s})$ is absolutely convergent for $\mathrm{Re}(s)$ sufficiently large.  We observe that for $f \in \mathcal{S}(X_P(\A_F))$ one has 
\begin{align} \label{relate}
E^*(g,f_{\chi_{s}}^*)=E_Q(g,j (f_{\chi_s}^*))
\end{align}
for $\mathrm{Re}(s)$ sufficiently small.  

By definition of the Schwartz space, for any $f \in \mathcal{S}(X_P(\A_F))$ the section $f_{\chi_s}$ is a holomorphic multiple of the product of completed Hecke $L$-functions $a_{P|P}(\chi_s).$  With this in mind, the meromorphy assertion is proven in \cite{Bernstein:Lapid,Langlands:FE} for $K$-finite functions $f.$ 
 These references also contain a proof of the functional equation
\begin{align*} 
E(g,f_{\chi_s})=E_Q(g,j\left(\mathcal{R}_{P|P^*}(f_{\chi_s})\right))
\end{align*}
which implies by \eqref{relate} that
\begin{align}\label{FE:unnorm}
E(g,f_{\chi_s})=E^*(g, \mathcal{R}_{P|P^*}(f_{\chi_s})).
\end{align}

  We have 
\begin{align} \label{relate:sections}
\mathcal{R}_{P|P^*}(f_{\chi_s})=
(\mathcal{F}_{P|P^*}(f))_{\chi_s}^*
\end{align}
by Theorem \ref{thm:FT} and the argument of \cite[Lemma 6.2]{Getz:Liu:BK}.  Thus the functional equation stated in the theorem follows from \eqref{FE:unnorm}.  

As mentioned above for any $f \in \mathcal{S}(X_P(\A_F))$ the Mellin transform $f_{\chi_s}$ is a holomorphic multiple of $a_{P|P}(\chi_s)$.  Thus the last assertion of the theorem follows from the fact that Eisenstein series attached to $K$-finite holomorphic sections are themselves holomorphic on the unitary axis \cite[Theorem 7.2]{ArthurIntro}, \cite{Langlands:FE}.
\end{proof}

Let $C(\chi_s)$ be the analytic conductor of $\chi_s,$ normalized as in \cite[(5.7)]{Getz:Liu:BK}.  Using notation as in \eqref{VAB} and \eqref{normVAB} one has the following estimate:
\begin{thm} \label{thm:ES:estimate} 
Assume that $F$ is a number field, that Conjecture \ref{conj:poles:intro} is true, and $f \in \mathcal{S}(X_P(\A_F))$ is $K$-finite.  
Let $A<B$ be real numbers and let $p\in \CC[x]$ be a polynomial such that $p(s)E(g,\chi_s)$ is holomorphic in the strip $V_{A,B}.$  Then for all $N \geq 0$ one has
$$
|E(g,f_{\chi_s})|_{A,B,p} \ll_{N,f} C(\chi_s)^{-N}.
$$
\end{thm}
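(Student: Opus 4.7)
The strategy is to bound the Eisenstein series in the regions where it is absolutely convergent and interpolate via the Phragm\'en-Lindel\"of principle, following the approach of \cite{Getz:Liu:BK} and \cite{Getz:Hsu:Leslie}.

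First I would fix real numbers $\sigma_- < A < B < \sigma_+$ with $\sigma_+$ sufficiently large that $E(g, f_{\chi_s})$ converges absolutely for $\mathrm{Re}(s) \geq \sigma_+$ and $\sigma_-$ sufficiently small that $E^*(g, \mathcal{F}_{P|P^*}(f)_{\chi_s}^*)$ converges absolutely for $\mathrm{Re}(s) \leq \sigma_-$. On the line $\mathrm{Re}(s) = \sigma_+$, a term-by-term bound reduces the estimate to a uniform bound of the form $|f_{\chi_s}(h)| \ll_{N, f, \Omega} C(\chi_s)^{-N}$ for $h$ in compact subsets of $G(\A_F)$; such a bound is essentially built into the definition of the Schwartz space $\mathcal{S}(X_P(\A_F))$, since applying high powers of a suitable element of $U(\mathfrak{m}^{\mathrm{ab}} \oplus \mathfrak{g})$ at the archimedean places produces polynomial factors in $s$ that convert the finiteness of the seminorms $|f|_{A,B,p_{P|Q},\Omega,D}$ into the required rapid decay. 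Combined with the exponential decay in the sum over $\gamma \in P(F)\backslash G(F)$ given by reduction theory, this yields the bound on $\mathrm{Re}(s) = \sigma_+$, and the functional equation of Theorem \ref{thm:ES:Langlands} together with the analogous Schwartz-space bound on $\mathcal{F}_{P|P^*}(f)$ gives the same rapid decay on $\mathrm{Re}(s) = \sigma_-$.

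Next I would apply Phragm\'en-Lindel\"of. By Conjecture \ref{conj:poles:intro} the poles of $E(g, f_{\chi_s})$ in any fixed vertical strip lie in a finite set depending only on $\chi$, so after possibly augmenting $p(s)$ by an additional polynomial factor (which does not affect the target bound up to a constant) one arranges that $p(s) E(g, f_{\chi_s})$ is holomorphic in a strip containing $V_{A,B}$ in its interior. Given a polynomial bound in $|\mathrm{Im}(s)|$ throughout this larger strip, Phragm\'en-Lindel\"of then converts the rapid decay on the two boundary lines into rapid decay throughout $V_{A,B}$, giving the desired estimate on $|E(g, f_{\chi_s})|_{A,B,p}$.

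The main obstacle is establishing the required vertical polynomial growth estimate throughout the larger strip, uniformly in $g$ on compact sets. Since our Mellin-transform sections $f_{\chi_s}$ need not be standard at the archimedean places, Lapid's estimates \cite{Lapid:Remark} cannot be invoked directly. As noted in \S \ref{ssec:Ind}, however, any holomorphic $K$-finite section is an $\mathcal{E}$-linear combination of standard $K$-finite sections; controlling the coefficients in this decomposition by local Schwartz-space seminorms and then applying the classical polynomial growth estimates for Eisenstein series built from standard $K$-finite sections should yield the needed growth estimate and complete the proof.
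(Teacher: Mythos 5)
Your proposal is correct and follows essentially the same route as the paper, which simply cites the argument of \cite[Theorem 6.3]{Getz:Liu:BK}: rapid decay in $C(\chi_s)$ on the two boundary lines coming from the Schwartz-space seminorms and the functional equation, holomorphy after multiplying by a polynomial via Conjecture \ref{conj:poles:intro}, and Phragm\'en-Lindel\"of using the polynomial vertical growth obtained by expressing the $K$-finite holomorphic section as an $\mathcal{E}$-combination of standard sections (exactly the observation the paper records in \S\ref{ssec:Ind}).
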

\begin{proof}
The argument is the same as that proving \cite[Theorem 6.3]{Getz:Liu:BK}.  
\end{proof}

\begin{rem}
Lapid has proved a version of Theorem \ref{thm:ES:Langlands} without the assumption of $K$-finiteness \cite{LapidRem}.  However, the proofs we know of Theorem \ref{thm:ES:estimate} still require the Eisenstein series to be $K$-finite.  Indeed, Eisenstein series that are $K$-finite are known to be quotients of functions of finite order (in the sense of complex analysis).  The main input into the proof of Theorem \ref{thm:ES:estimate} is the Phragmen-Lindel\"of principle, which requires the functions involved to be of finite order. 
\end{rem}

Let $K_M <M^{\mathrm{ab}}(\A_F)$ be the maximal compact subgroup and let 
\begin{align} \label{kappaF}
\kappa_F:=\begin{cases}\frac{2^{r_1+r_2}h_F R_F}{d_F^{1/2}e_F}& \textrm{ if }F \textrm{ is a number field}\\
\frac{h_F\log q}{d_F^{1/2}(q-1)}&\textrm{ if }F \textrm{ is a function field}\end{cases}
\end{align}
where $r_1$ and $r_2$ are the number of real (resp.~complex) places, $h_F$ is the class number,  $R_F$ is the regulator, $d_F$ is the absolute discriminant, $e_F$ is the number of roots of unity in $F$,  and in the function field case $F$ has field of constants $\mathbb{F}_q$.  
For complex numbers $s_0$ let
\begin{align} \label{wz}
w(s_0)=\begin{cases} \tfrac{1}{2} \textrm{ if }\mathrm{Re}(s_0)=0,\\
1 \textrm{ otherwise.}\end{cases}
\end{align}
We now prove the following special case of Theorem \ref{thm:PS:intro}:

\begin{thm}\label{thm:PS:BK} 
Let $f \in \mathcal{S}(X_P(\A_F))$.  Assume that
\begin{enumerate}
    \item $F$ is a function field,
    \item $F$ is a number field, Conjecture \ref{conj:poles:intro} is valid, and $f$ is $K_M$-finite, or 
    \item $F$ is a number field and Conjecture \ref{conj:poles} is valid.
\end{enumerate}
One has
\begin{align*} 
\sum_{x \in X_{P}^\circ(F)}&f(x)+\sum_{\chi} \sum_{\substack{s_0 \in \CC\\ \mathrm{Re}(s_0)\geq 0}} \frac{w(s_0)}{\kappa_F} \mathrm{Res}_{s=s_0}E^*(I,\mathcal{F}_{P|P^*}(f)^*_{\chi_{-s}})
\\&=\sum_{x^* \in X_{P^*}^{\circ}(F)}\mathcal{F}_{P|P^*}(f)(x^*)+\sum_{\chi} \sum_{\substack{s_0 \in \CC\\ \mathrm{Re}(s_0)\geq 0}} \frac{w(s_0)}{\kappa_F} \mathrm{Res}_{s=s_0}E(I,f_{\chi_s}).
\end{align*}
Here the sum on $\chi$ is over characters of $A_{\GG_m} F^\times \backslash \A_F^\times.$  The sums over $x$ and $x^*$ are absolutely convergent.  
\end{thm}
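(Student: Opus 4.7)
The plan is to adapt the Mellin-inversion/contour-shift strategy used in \cite[\S 7]{Getz:Liu:BK} (where $G=\mathrm{Sp}_{2n}$ and $P$ is the Siegel parabolic) to the general maximal parabolic setting. Since $|\Delta_P|=1$, the group $M^{\mathrm{ab}}$ is a one-dimensional split torus via $\omega_P$ by Lemma \ref{lem:Plucker:stuff}. Using Lemma \ref{lem:surj} to identify $X_P^\circ(F)$ with $P^{\mathrm{der}}(F)\backslash G(F)$ and grouping by $M^{\mathrm{ab}}(F)$-orbits, one begins from
$$
\sum_{x \in X_P^\circ(F)} f(x) \;=\; \sum_{\gamma \in P(F) \backslash G(F)} \sum_{m \in M^{\mathrm{ab}}(F)} f(m^{-1}\gamma).
$$
Mellin inversion of the inner periodization on $A_{\GG_m}F^\times \backslash \A_F^\times$ (whose Plancherel measure contributes the factor $\kappa_F^{-1}$ of \eqref{kappaF}), together with the absolute convergence of the Mellin transforms provided by lemmas \ref{lem:esti}, \ref{lem:unram:esti}, and \ref{lem:Mellin:conv}, justifies interchanging Mellin inversion with the outer $\gamma$-sum. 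Recognizing the $\gamma$-sum of $f_{\chi_s}$ as the Eisenstein series of \eqref{ES} yields, for any sufficiently large $\sigma$,
$$
\sum_{x \in X_P^\circ(F)} f(x) \;=\; \frac{1}{\kappa_F}\sum_\chi \int_{\mathrm{Re}(s)=\sigma} E(I, f_{\chi_s}) \,\frac{ds}{2\pi i}.
$$

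The symmetric argument applied to $\mathcal{F}_{P|P^*}(f) \in \mathcal{S}(X_{P^*}(\A_F))$, using the dual half of Lemma \ref{lem:Mellin:conv} (absolute convergence of $(\cdot)^*_{\chi_s}$ for $\mathrm{Re}(s)$ small), gives
$$
\sum_{x^* \in X_{P^*}^\circ(F)} \mathcal{F}_{P|P^*}(f)(x^*) \;=\; \frac{1}{\kappa_F}\sum_\chi \int_{\mathrm{Re}(s)=-\sigma} E^*(I, \mathcal{F}_{P|P^*}(f)^*_{\chi_s})\,\frac{ds}{2\pi i}.
$$
The functional equation of Theorem \ref{thm:ES:Langlands} identifies the integrand here with $E(I, f_{\chi_s})$. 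Subtracting the two displayed identities therefore recasts the difference $\sum_x f(x) - \sum_{x^*} \mathcal{F}_{P|P^*}(f)(x^*)$ as $\kappa_F^{-1}\sum_\chi$ of the difference of two contour integrals of the same meromorphic integrand, which by the residue theorem equals the sum of residues enclosed in the strip $-\sigma < \mathrm{Re}(s) < \sigma$.

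Justifying the contour shift requires three inputs. Theorem \ref{thm:ES:estimate} provides the rapid decay in $\mathrm{Im}(s)$ needed to close the contour at $\pm i\infty$; Conjecture \ref{conj:poles:intro} (case 2) or Conjecture \ref{conj:poles} (case 3) guarantees that the poles of $E(I, f_{\chi_s})$ in the critical strip lie in a finite set depending only on $\chi$; and the $K_M$-finiteness of $f$ in case 2, respectively the uniform character bound of Conjecture \ref{conj:poles} in case 3, renders the outer $\chi$-sum finite (or absolutely convergent). In the function field case the $\chi$-spectrum is automatically discrete, each Eisenstein series is a rational function of $q^{-s}$ with finitely many poles on the compact cylinder $\CC/(2\pi i/\log q)\ZZ$, and the character sum is finite for any fixed Schwartz function, so the contour shift is unconditional. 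Finally one partitions residues by the sign of $\mathrm{Re}(s_0)$: residues with $\mathrm{Re}(s_0)\geq 0$ remain as contributions to the right-hand side of the stated identity; residues with $\mathrm{Re}(s_0)\leq 0$ are transported to the left-hand side via the functional equation and the change of variable $s\mapsto -s$, which identifies them (with the correct sign, and with the weight $w(s_0)=\tfrac{1}{2}$ for boundary contributions on $\mathrm{Re}(s_0)=0$) with residues of $E^*(I, \mathcal{F}_{P|P^*}(f)^*_{\chi_{-s}})$ at $\mathrm{Re}(s_0)\geq 0$.

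The main obstacle is the uniform-in-$\chi$ pole and growth control needed to perform the contour shift and to justify the interchange of summation with integration; this is exactly what Theorem \ref{thm:ES:estimate} together with the pole conjectures provides, and it is the reason the three cases are stated separately. A secondary technical point is that sections $f_{\chi_s}$ coming from general $f\in\mathcal{S}(X_P(\A_F))$ need not be $K$-finite at archimedean places; but since by construction they are holomorphic multiples of the standard $L$-function factor $a_{P|P}(\chi_s)$, the estimates of theorems \ref{thm:ES:Langlands} and \ref{thm:ES:estimate} transfer to them as in \cite[\S 6]{Getz:Liu:BK}, closing the argument.
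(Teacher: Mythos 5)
Your route --- Mellin inversion over $A_{\GG_m}F^\times\backslash\A_F^\times$ with the constant $\kappa_F^{-1}$, identification of the resulting $\gamma$-sum with the degenerate Eisenstein series of \eqref{ES}, a contour shift (your ``difference of two contours'' is the same computation) controlled by Theorem \ref{thm:ES:estimate}, the functional equation of Theorem \ref{thm:ES:Langlands}, and the symmetrization of the residue sum via $\mathrm{Res}_{s=s_0}f(s)=-\mathrm{Res}_{s=-s_0}f(-s)$ producing the weights $w(s_0)$ --- is the same as the paper's, and that part of the argument is sound.

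The gap is in your final paragraph, where you dispose of the archimedean $K$-finiteness issue by asserting that the estimates of theorems \ref{thm:ES:Langlands} and \ref{thm:ES:estimate} ``transfer'' to general sections because $f_{\chi_s}$ is a holomorphic multiple of $a_{P|P}(\chi_s)$. That is not the obstruction: the $\mathcal{E}$-module device of \S\ref{ssec:Ind} only upgrades \emph{standard} $K$-finite sections to \emph{holomorphic} $K$-finite sections; it does nothing for sections that fail to be $K_\infty$-finite, and both the meromorphic continuation and the vertical-strip decay are available in the literature only for $K_\infty$-finite data. For a general $f\in\mathcal{S}(X_P(\A_F))$ the quantity $\mathrm{Res}_{s=s_0}E(I,f_{\chi_s})$ is not even defined a priori; the theorem's statement is interpreted via a continuous extension. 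The paper therefore argues in two stages: it first proves the identity for $f=f_\infty f^\infty$ with $f_\infty$ $K_\infty$-finite, and then approximates a general $f_\infty$ by $K_\infty$-finite functions $f_i\to f_\infty$ in the Fr\'echet topology and passes to the limit. The step requiring an idea is the continuity of the residue functional $f_\infty\mapsto \mathrm{Res}_{s=s_0}E(I,(f_\infty f^\infty)_{\chi_s})$ on the $K_\infty$-finite subspace: one uses Lemma \ref{lem:Macts} to choose $f'^\infty$ isolating the single residue at $s_0$, so that this functional becomes the difference $\Lambda_{1,f'^\infty}-\Lambda_{2,f'^\infty}$ of the two point-summation functionals, whose continuity follows from Lemma \ref{lem:esti}; one also needs that the support of the $(\chi,s_0)$-sum lies in a finite set independent of the approximating sequence, which again uses the pole hypotheses together with the fixed $K_M$-type. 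Without this (or an equivalent) argument, your proof establishes the theorem only for $K_\infty$-finite $f$, which is strictly weaker than the statement and insufficient for the applications in \S\ref{sec:proofs}.
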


\noindent It is clear that if $f$ is $K_M$-finite and Conjecture \ref{conj:poles:intro} is valid then the double sum over $\chi$ and $s_0$ has finite support. The same is true if $F$ is a function field or Conjecture \ref{conj:poles} is valid.  This is how we are using these assumptions here and below.

\begin{rem}Before proving Theorem \ref{thm:PS:BK} we  clarify its meaning.
Assume $F$ is a number field and Conjecture \ref{conj:poles:intro} is valid.  
For any $s_0 \in \CC$ and $f^\infty \in \mathcal{S}(X_P(\A_F^\infty))$ with the finite adels $ \A_F^\infty$ consider the linear functional
\begin{align} \label{residue:func}
f_\infty \longmapsto \mathrm{Res}_{s=s_0}E(I,(f_\infty f^\infty)_{\chi_s}).
\end{align}
It is defined on the dense subspace of $\mathcal{S}(X_P(F_\infty))$ consisting of $K_\infty$-finite functions.  The proof of the theorem shows that this linear functional is continuous with respect to the Fr\'echet topology on $\mathcal{S}(X_P(F_\infty)).$  Hence it extends to all of $\mathcal{S}(X_P(F_\infty)).$  
 For $f_\infty$ that are not $K_\infty$-finite this is the manner the expression $\mathrm{Res}_{s=s_0}E(I,f_{\chi_s})$ is to be interpreted in this paper.   We take the obvious analogous conventions regarding the meaning of $\mathrm{Res}_{s=s_0}E^*(I,f'^*_{\chi_s})$ for $f' \in \mathcal{S}(X_{P^*}(\A_F))$ that are not $K_\infty$-finite. 
 \end{rem}

\begin{proof}[Proof of Theorem \ref{thm:PS:BK}]
Let $K_\infty \leq G(F_\infty)$ be a maximal compact subgroup.  Assume first that $f=f_\infty f^\infty$ where $f_\infty \in \mathcal{S}(X_{P}(F_\infty))$ is $K_\infty$-finite.  
Let 
$$
I_F:=\begin{cases} 
i\left[-\frac{\pi}{\log q},\frac{\pi}{\log q} \right] & \textrm{ if }F \textrm{ is a function field, and }\\
i\RR &\textrm{ if }F \textrm{ is a number field.} \end{cases}
$$
By Mellin inversion and Theorem \ref{thm:ES:estimate} in the number field case
\begin{align}
    \sum_{x \in X_{P}^\circ(F)}f(x)=\sum_{\chi} \int_{\sigma+I_F }E(I,f_{\chi_s}) \frac{ds}{c 2\pi i}
\end{align}
for $\sigma$ sufficiently large and a suitable constant $c.$  
Here the sum over $\chi$ is as in the statement of the theorem.  Since $f$ is $K_\infty$-finite, the support of the sum over $\chi$ is finite.  The constant $c$ may be computed as follows.  It depends on our choice of Haar measure on $[M^{\mathrm{ab}}],$ which is induced by the Haar measure on $M^{\mathrm{ab}}(\A_F)$ fixed in \S \ref{ssec:measures}. This is constructed as explained in \S \ref{ssec:measures} from a  measure on $\A_F$ that is self-dual with respect to $\psi.$  The induced measure on $F \backslash \A_F$ is independent of the choice of $\psi.$  As in \cite{Weil:Basic:NT}, give $A_{\GG_m}$ the Haar measure that corresponds to $\frac{dt}{t}$ under the isomorphism $|\cdot|:A_{\GG_m} \tilde{\to}\RR_{>0}$ if $F$ is a number field and the counting measure if $F$ is a function field.
Thus using \cite[\S VII.6, Proposition 12]{Weil:Basic:NT} and a choice of self-dual measure one obtains
\begin{align} \label{meas}
\mathrm{meas}(A_{\GG_m} F^\times \backslash \A_F^\times)=\begin{cases} \frac{2^{r_1+r_2}h_F R_F}{d_{F}^{1/2}e_F} &\textrm{ if }F \textrm{ is a number field}\\
\frac{h_F }{d_{F}^{1/2}(q-1)} & \textrm{ if }F \textrm{ is a function field.}
\end{cases}
\end{align}
This implies $c=\kappa_F.$  Notice that in the function field case  $\kappa_F=\mathrm{meas}(A_{\GG_m} F^\times \backslash \A_F^\times)\log q$ because of the measure of $I_F.$

We shift contours to $\mathrm{Re}(\sigma)$ very small to see that this is 
\begin{align}
    \sum_{x \in X_{P}^\circ(F)}f(x)=\sum_{\chi} \int_{\sigma'+I_F}E(I,f_{\chi_s}) \frac{ds}{\kappa_F2\pi i}+\frac{1}{\kappa_F}\sum_{\chi}\sum_{s_0 \in \CC} \mathrm{Res}_{s=s_0}E(I,f_{\chi_s})
\end{align}
where now $\sigma'$ is sufficiently small.  Here the support of the sum over $\chi$ and $s_0$ is finite by assumptions (2) or (3) in the number field case and the fact that $E(I,f_{\chi_s})$ is rational in the sense of \cite[IV.1.5]{MW:Spectral:Decomp:ES} in the function field case \cite[Proposition IV.1.12]{MW:Spectral:Decomp:ES}.  In the number field case the bound required to justify the contour shift is provided by Theorem \ref{thm:ES:estimate}.   We now apply the functional equation of Theorem \ref{thm:ES:Langlands} and Mellin inversion to deduce the identity 
\begin{align}  \label{before:fe}
\sum_{x \in X_{P}^\circ(F)}f(x)=\sum_{x^* \in X_{P^*}^{\circ}(F)}\mathcal{F}_{P|P^*}(f)(x^*)+\frac{1}{\kappa_F}\sum_{\chi}\sum_{s_0 \in \CC} \mathrm{Res}_{s=s_0}E(I,f_{\chi_s}).
\end{align}

Since all elements of the Schwartz space $\mathcal{S}(X_P(\A_F))$ are $K_\infty$-finite in the function field case assume for the moment that $F$ is a number field.
Write $K_{M\infty}=K_{M} \cap M^{\mathrm{ab}}(F_\infty).$ We assume without loss of generality that $f=f_\infty f^\infty$ where $f_\infty \in \mathcal{S}(X_P(F_\infty))$, $f^\infty \in \mathcal{S}(X_P(\A_F^\infty))$.  We additionally either assume (2) and that $f_\infty$ transforms according to a particular character $\eta$ under $K_{M\infty}$, or we assume (3).  
Since $K_\infty$-finite functions are dense in $\mathcal{S}(X_{P}(F_\infty))$ by the standard argument \cite[\S 4.4.3.1]{Warner:semisimple:I} we can choose a sequence $\{f_i:i \in \ZZ_{\geq 1}\} \subset \mathcal{S}(X_P(F_\infty))$ of $K_\infty$-finite $f_i$ such that $f_i \to f$ in the Frech\'et topology on $\mathcal{S}(X_P(F_\infty)).$  Under assumption (2) we additionally assume that  the $f_i$ transform under $K_{M\infty}$ by $\eta$.  
We observe that the support of the sum over $\chi$ and $s_0$ in \eqref{before:fe} and its analogues when $f$ is replaced by $f_if^\infty$ are contained in a finite set independent of $i$ under either assumption (2) or (3).  In fact, the finite set can be taken to depend only on the $K_M^\infty$-type of $f^\infty$.  

It is clear from Lemma \ref{lem:esti} that for each fixed $f^\infty \in \mathcal{S}(X_P(\A_F^\infty))$ the map
\begin{align*}
\Lambda_{1,f^\infty}:\mathcal{S}(X_P(F_\infty)) &\lto \CC\\
f_\infty &\longmapsto \sum_{x \in X_P^\circ(F)}f_\infty(x)f^\infty(x)
\end{align*}
is continuous on $\mathcal{S}(X_P(F_\infty)).$  The same is true of 
\begin{align*}
\Lambda_{2,f^\infty}:\mathcal{S}(X_P(F_\infty)) &\lto \CC\\
f_\infty &\longmapsto \sum_{x^* \in X_{P^*}^\circ(F)}\mathcal{F}_{P|P^*}(f_\infty)(x^*)\mathcal{F}_{P|P^*}(f^\infty)(x^*)
\end{align*}
since the Fourier transform is continuous.  Thus $\Lambda_{j,f^\infty}(f_i) \to \Lambda_{j,f^\infty}(f)$ for as $i \to \infty$ for $j \in \{1,2\}.$  Finally consider
\begin{align*}
\Lambda_{f^\infty,s_0}:\mathcal{S}(X_P(F_\infty)) &\lto \CC\\
f_\infty &\longmapsto  \mathrm{Res}_{s=s_0}E(I,f_{\chi_s}).
\end{align*}
Using Lemma \ref{lem:Macts} we can choose an $f'^\infty \in \mathcal{S}(X_{P}(\A_F^\infty))$ such that 
$$
\Lambda_{f^\infty,s_0}=\Lambda_{1,f'^\infty}-\Lambda_{2,f'^\infty}.
$$
In more detail one uses Lemma \ref{lem:Macts} to choose $f'^\infty$ so that the contribution of $\mathrm{Res}_{s=s_0}E(I,f_{\chi_s})$ to \eqref{before:fe} is unchanged, but the contribution of the other residues vanish.  

Thus $\Lambda_{f^\infty,s_0}$ is continuous. We deduce that \eqref{before:fe} is valid for all $K_M$-finite $f$  under assumption (2) and for all $f$ under assumption (3).

We now return to the case of a general global field.
To obtain the expression in the theorem from \eqref{before:fe} we use the functional equation and holomorphy assertion of Theorem \ref{thm:ES:Langlands} and the observeration that for any meromorphic function $f$ on $\CC$ and any $s_0 \in \CC$ one has
\begin{align}
    \mathrm{Res}_{s=s_0}f(s)=-\mathrm{Res}_{s=-s_0}f(-s).
\end{align}
\end{proof}

\begin{lem} \label{lem:lin:func} Let $\chi \in \widehat{A_{\GG_m} F^\times \backslash \A_F^\times}.$  Let $n$ be the maximal order of the pole at $s_0$ of $E(I,f_{\chi_s})$
as $f$ ranges over $K_\infty$-finite elements of $\mathcal{S}(X_P(F_\infty)).$  
For each $f^\infty \in \mathcal{S}(X_P(\A_F^\infty))$ there are continuous linear functionals
$$
\Lambda_i((\cdot)f^\infty):\mathcal{S}(X_P(F_\infty)) \lto \CC
$$
such that if $f_\infty$ is $K_\infty$-finite then
\begin{align} \label{when:K:finite}
    \delta_P^{-1/2}(\beta^\vee_0(t))\sum_{i=0}^{n-1} |t|^{-s_0}(\log|t|)^i \overline{\chi}(t)\Lambda_i(f_\infty f^\infty)=\mathrm{Res}_{s=s_0}E(I,(L(\beta_0^\vee(t))f_\infty f^\infty)_{\chi_s}).
\end{align}
\end{lem}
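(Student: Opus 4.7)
The plan is to identify $\Lambda_i(f_\infty f^\infty)$ with a normalization of the Laurent coefficient of $(s-s_0)^{-1-i}$ in $E(I,(f_\infty f^\infty)_{\chi_s})$ for $K_\infty$-finite $f_\infty$, and then to extend these functionals continuously via Vandermonde inversion. First I compute how the Mellin transform behaves under left translation. Using that $M^{\mathrm{ab}}$ is abelian, that $\omega_P(\beta_0^\vee(t))=t$ by Lemma \ref{lem:Plucker:stuff}, and translation invariance of Haar measure, a change of variable in \eqref{Mellin} gives
$$
(L(\beta_0^\vee(t))f)_{\chi_s} \;=\; \delta_P^{-1/2}(\beta_0^\vee(t))\,\overline{\chi}(t)\,|t|^{-s}\, f_{\chi_s},
$$
so $E(I,(L(\beta_0^\vee(t))f)_{\chi_s})$ differs from $E(I,f_{\chi_s})$ by the same scalar. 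Writing $|t|^{-s}=|t|^{-s_0}\sum_{k\ge 0}\tfrac{(-(s-s_0)\log|t|)^k}{k!}$ and extracting the $(s-s_0)^{-1}$ coefficient rewrites the right hand side of the lemma as
$$
\delta_P^{-1/2}(\beta_0^\vee(t))\,\overline{\chi}(t)\,|t|^{-s_0}\sum_{i=0}^{n-1}(\log|t|)^i\cdot\frac{(-1)^i}{i!}\,c_{-1-i}(f_\infty f^\infty),
$$
where $c_{-1-i}(f_\infty f^\infty)$ denotes the coefficient of $(s-s_0)^{-1-i}$ in the Laurent expansion of $E(I,(f_\infty f^\infty)_{\chi_s})$ at $s_0$. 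Thus the natural definition for $K_\infty$-finite $f_\infty$ is $\Lambda_i(f_\infty f^\infty):=\tfrac{(-1)^i}{i!}c_{-1-i}(f_\infty f^\infty)$, which makes sense because $E(I,(f_\infty f^\infty)_{\chi_s})$ is meromorphic by Theorem \ref{thm:ES:Langlands}.

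To extend each $\Lambda_i$ continuously to all of $\mathcal{S}(X_P(F_\infty))$, I observe that for every $t\in F_\infty^\times$ the map
$$
T_t(f_\infty)\;:=\;\delta_P^{1/2}(\beta_0^\vee(t))\,\chi(t)\,|t|^{s_0}\cdot\mathrm{Res}_{s=s_0}E(I,(L(\beta_0^\vee(t))f_\infty f^\infty)_{\chi_s})
$$
is a continuous linear functional on $\mathcal{S}(X_P(F_\infty))$: the operator $L(\beta_0^\vee(t))$ acts continuously on $\mathcal{S}(X_P(F_\infty))$ by \eqref{Sch:act}, and the residue functional itself is continuous on the Fr\'echet space $\mathcal{S}(X_P(F_\infty))$ by the remark immediately preceding the proof of Theorem \ref{thm:PS:BK}, where continuity is obtained by rewriting the residue as $\Lambda_{1,f'^\infty}-\Lambda_{2,f'^\infty}$ via Lemma \ref{lem:Macts} with each piece visibly continuous by Lemma \ref{lem:esti}. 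Choose $n$ elements $t_1,\dots,t_n\in F_\infty^\times$ so that $\log|t_1|,\dots,\log|t_n|$ are pairwise distinct; then the Vandermonde matrix $V=((\log|t_j|)^i)_{1\le j\le n,\,0\le i\le n-1}$ is invertible, and I set
$$
\Lambda_i\;:=\;\sum_{j=1}^n (V^{-1})_{ij}\,T_{t_j},
$$
which is a continuous linear functional on $\mathcal{S}(X_P(F_\infty))$.

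It remains to check that these $\Lambda_i$ satisfy the lemma's identity for $K_\infty$-finite $f_\infty$. By the preliminary Laurent computation, for $K_\infty$-finite $f_\infty$ the vector $(\Lambda_0(f_\infty f^\infty),\dots,\Lambda_{n-1}(f_\infty f^\infty))^T$ is the unique solution to the linear system $V\mathbf{x}=(T_{t_1}(f_\infty),\dots,T_{t_n}(f_\infty))^T$; thus the identity holds at $t=t_1,\dots,t_n$ and, both sides being polynomials in $\log|t|$ of degree less than $n$, it holds for all $t\in F_\infty^\times$ by interpolation. The only substantive step is the continuity of the residue functional on $\mathcal{S}(X_P(F_\infty))$, and this has already been handled in the proof of Theorem \ref{thm:PS:BK}; everything else is formal manipulation and Vandermonde inversion.
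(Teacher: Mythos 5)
Your proof is correct, and its core --- expanding $|t|^{-s}$ in powers of $s-s_0$ against the Laurent expansion of $E(I,(f_\infty f^\infty)_{\chi_s})$ at $s_0$ and identifying $\Lambda_i$ with the normalized coefficient of $(s-s_0)^{-1-i}$ --- is exactly the paper's argument, down to the normalization $\Lambda_i=\tfrac{(-1)^i}{i!}c_{-1-i}$. Where you genuinely diverge is the continuity step. The paper disposes of it by asserting that the extension ``follows from a variant of the proof of the continuity of the functional $\Lambda_{f^\infty,s_0}$'' in Theorem \ref{thm:PS:BK}, which implicitly means redoing the Lemma \ref{lem:Macts} manipulation so as to isolate each higher-order Laurent coefficient separately. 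You instead take the continuity of the residue functional as a black box, observe that precomposition with $L(\beta_0^\vee(t))$ is continuous on $\mathcal{S}(X_P(F_\infty))$, and recover the individual $\Lambda_i$ from the residues at $n$ translates by inverting a Vandermonde matrix in the $\log|t_j|$. This is cleaner and more self-contained: it requires no new analytic input beyond what Theorem \ref{thm:PS:BK} already provides, and the agreement with the Laurent-coefficient definition on the dense subspace of $K_\infty$-finite vectors is immediate because both are solutions of the same invertible linear system, with interpolation in $\log|t|$ then giving the identity for all $t$. The only caveats are minor: the continuity of the residue functional in Theorem \ref{thm:PS:BK} is itself conditional on the running hypotheses (Conjecture \ref{conj:poles:intro} or \ref{conj:poles} in the number field case), so your argument inherits those exactly as the paper's does; and in the function field case the extension is tautological since the topology is discrete, so the Vandermonde step is only needed over number fields.
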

As usual, when $F$ is a function field we give $\mathcal{S}(X_P(F_\infty))$ the discrete topology.
\begin{proof}
We have $\tfrac{1}{|t|^{s}}=\sum_{j=0}^\infty\frac{(-(s-s_0)\log|t|)^j}{ j! |t|^{s_0}}$ for $s$ in a neighborhood of $s_0$.
On the other hand if $f_\infty$ is $K_\infty$-finite then we can write 
$$
E(I,(f_{\infty}f^\infty)_{\chi_s})=\sum_{i=0}^{n-1}\frac{(-1)^{i}i! \Lambda_i(f_\infty f^\infty)}{(s-s_0)^{i+1}} +g(s)
$$
where $g(s)$ is holomorphic in a neighborhood of $s_0$ and $\Lambda_i(f_\infty f^\infty) \in \CC$.  Then the expression \eqref{when:K:finite} is valid for $K_\infty$-finite $f_\infty.$

The fact that $\Lambda_i((\cdot)f^\infty)$
extends to a continuous functional on all of $\mathcal{S}(X_P(F_\infty))$ is tautological in the function field case.  In the number field case it follows from a variant of the proof of the continuity of the functional $\Lambda_{f^\infty,s_0}$ in the proof of Theorem \ref{thm:PS:BK}.
\end{proof}

\section{Proofs of Theorem \ref{thm:ES:intro} and Theorem \ref{thm:PS:intro}} \label{sec:proofs}

For $f \in \mathcal{S}(Y_{P,P'}(\A_F))$ consider the sum 
\begin{align} \label{sum:xi}
\sum_{y \in Y_{P}(F)}f(y ).
\end{align}

\begin{lem} \label{lem:abs:conv}
The sum \eqref{sum:xi} is absolutely convergent. 
\end{lem}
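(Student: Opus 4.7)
The strategy is to bound $|f|$ pointwise by the pullback along the Pl\"ucker embedding of a Schwartz function on the ambient affine space $V$, then sum over the $F$-points of $V$.

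By multilinearity reduce to the case $f=\otimes_v f_v$ with $f_v=b_{Y_{P,P'}}$ outside a finite set $S$ of places containing the archimedean ones. Fix a small $\alpha>0$, enlarging $S$ so that every $v\notin S$ satisfies the residue-characteristic hypothesis of Lemma \ref{lem:unram:esti}. Apply Lemma \ref{lem:esti} at $v\in S$ and Lemma \ref{lem:unram:esti} at $v\notin S$ to obtain nonnegative Schwartz functions $\Phi_v$ on $V(F_v)$ (extended by zero off $V_{\beta_0}\times V_{\Delta_{P'}}^\circ$ in the nonarchimedean case, and by taking an enveloping Schwartz function on $\RR^n$ in the archimedean case) whose tensor product $\Phi\in \mathcal{S}(V(\A_F))$ satisfies
$$
|f(x)|\leq \Phi(\mathrm{Pl}_P(x))\prod_{\beta\in\Delta_P}|\mathrm{Pl}_{P_\beta}(x)|^{\alpha-r_\beta}
$$
for every $x\in Y_P(\A_F)$.

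By Lemma \ref{lem:Plucker:stuff} the Pl\"ucker map restricts to an injection $Y_P(F)\hookrightarrow V^\circ(F)$, so each coordinate $v_\beta$ of $v=\mathrm{Pl}_P(y)$ is a nonzero $F$-point of $V_\beta$. Because the chosen $K_v$-invariant local norms are equivalent to sup-norms at the nonarchimedean places, a Northcott-type height estimate supplies a uniform lower bound $|w|\geq c_\beta>0$ for all $w\in V_\beta(F)\setminus\{0\}$. Choosing $\alpha<\min_\beta r_\beta$, which is possible since each $r_\beta>0$, makes each exponent $\alpha-r_\beta$ negative, so the power factor is bounded by a constant $C$ independent of $v$. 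Hence
$$
\sum_{y\in Y_P(F)}|f(y)|\leq C\sum_{v\in V(F)}\Phi(v)<\infty,
$$
the finiteness being the standard absolute summability of adelic Schwartz functions over the $F$-points of a finite-dimensional $F$-vector space.

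The only nontrivial step is assembling the local bounds at unramified places into an adelic Schwartz function; this is precisely what the uniformity in $q_v$ asserted in the remark following Lemma \ref{lem:unram:esti} guarantees.
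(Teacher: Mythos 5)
Your proof is correct and follows essentially the same route as the paper's: the paper likewise combines Lemma \ref{lem:esti} at the places in $S$ with Lemma \ref{lem:unram:esti} at the unramified places to dominate the sum by $\sum_{\xi \in V^{\circ}(F)}\Phi(\xi)\prod_{\beta}|\xi|_\beta^{\alpha-r_\beta}$ for small $\alpha>0$. You additionally spell out why that dominating sum converges (the product-formula lower bound on $|\cdot|_\beta$ at nonzero $F$-points together with $\alpha-r_\beta<0$, reducing to summability of an adelic Schwartz function over a lattice), which the paper leaves implicit.
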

\begin{proof}
Let $|\cdot|_{\beta}=\prod_{v}|\cdot|_{\beta,v}$, where $|\cdot|_{\beta,v}$ is the norm on $V_{\beta}(F_v)$ fixed above \eqref{normbeta}.
Using lemmas \ref{lem:esti} and \ref{lem:unram:esti} and the notation in these lemmas we see that for any sufficiently small $\alpha>0$ there is a Schwartz function on $\Phi \in \mathcal{S}(V_{\beta_0}(\A_F) \times V_{\Delta_{P'}}^\circ(\A_F))$ such that the sum is dominated by 
\begin{align}
    \sum_{\xi \in V^{\circ}(F)}\Phi(\xi)\prod_{\beta \in \Delta_P}|\xi|^{\alpha-r_\beta}_\beta <\infty
\end{align}
where $V^\circ$ is defined as in \eqref{Vcirc}.
\end{proof}

Recall the function $w(s_0)$ from \eqref{wz}.  The following theorem is the precise version of Theorem \ref{thm:PS:intro}:
\begin{thm} \label{thm:PS}
Let $f \in \mathcal{S}(Y_{P,P'}(\A_F))$.  Assume
\begin{enumerate}
    \item $F$ is a function field,
    \item $F$ is a number field, Conjecture \ref{conj:poles:intro} is valid, and $f$ is $K_M$-finite, or 
    \item $F$ is a number field and Conjecture \ref{conj:poles} is valid.
\end{enumerate}
One has
\begin{align*} 
&\sum_{x \in Y_{P}(F)}f(x )+\sum_{y \in P'^{\mathrm{der}}(F) \backslash Y(F)}\sum_{\chi} \sum_{\substack{s_0 \in \CC\\\mathrm{Re}(s_0)\geq 0}}\frac{w(s_0)}{\kappa_F} \mathrm{Res}_{s=s_0}E^*(I,\iota_y^*(\mathcal{F}_{P|P^*}(f))^*_{\chi_{-s}})\\
&=
\sum_{x^* \in Y_{P^*}(F)}\mathcal{F}_{P|P^*}(f)(x ^*)+\sum_{y \in P'^{\mathrm{der}}(F) \backslash Y(F)}\sum_{\chi} \sum_{\substack{s_0 \in \CC\\\mathrm{Re}(s_0)\geq 0}} \frac{w(s_0)}{\kappa_F}\mathrm{Res}_{s=s_0}E(I,\iota_y^*(f)_{\chi_s})
\end{align*}
where the sums on $\chi$ are over all characters of $A_{\GG_m} F^\times \backslash \A_F^\times.$  Moreover the sums over $x$ and $x^*$ and the triple sums over $(y,\chi,s_0)$ are absolutely convergent. 
\end{thm}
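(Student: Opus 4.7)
The strategy is to reduce Theorem \ref{thm:PS} to the Poisson summation formula on the auxiliary Braverman--Kazhdan space $X_{P \cap M_{\beta_0}}$ established in Theorem \ref{thm:PS:BK}, using the orbit decomposition of Lemma \ref{lem:cosets} and the compatibility of the Fourier transform with the maps $\iota_y^*$ given in Proposition \ref{prop:iotay:glob}.

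First, I would fix a set $\Xi \subset Y(F)$ of representatives for $P'^{\mathrm{der}}(F) \backslash Y(F).$ By Lemma \ref{lem:cosets} applied over $F$ one has
\begin{align*}
\sum_{x \in Y_{P}(F)} f(x) = \sum_{y \in \Xi} \sum_{x \in X_{P \cap M_{\beta_0}}^\circ(F)} \iota_y^*(f)(x),
\end{align*}
and similarly for the $Y_{P^*}(F)$ sum. For each fixed $y \in \Xi$ the function $\iota_y^*(f)$ lies in $\mathcal{S}(X_{P \cap M_{\beta_0}}(\A_F))$ by Proposition \ref{prop:iotay:glob}, and I would verify that the relevant hypothesis of Theorem \ref{thm:PS:BK} (function field, $K_M$-finiteness, or Conjecture \ref{conj:poles}) is inherited by $\iota_y^*(f)$ from the corresponding hypothesis on $f$; this follows because $\iota_y^*$ is continuous and intertwines the action of $M^{\mathrm{ab}}(\A_F)$ (regarded through $M \cap M_{\beta_0}$) with its natural action on the image. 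Applying Theorem \ref{thm:PS:BK} to each $\iota_y^*(f)$ one obtains, for each $y$, the identity
\begin{align*}
\sum_{x \in X_{P \cap M_{\beta_0}}^\circ(F)} \iota_y^*(f)(x) &+ \sum_\chi \sum_{\substack{s_0 \in \CC\\ \mathrm{Re}(s_0)\geq 0}} \frac{w(s_0)}{\kappa_F}\mathrm{Res}_{s=s_0} E^*(I,\mathcal{F}_{P \cap M_{\beta_0}|P^* \cap M_{\beta_0}}(\iota_y^*(f))^*_{\chi_{-s}}) \\
&= \sum_{x^* \in X_{P^* \cap M_{\beta_0}}^\circ(F)} \mathcal{F}_{P \cap M_{\beta_0}|P^* \cap M_{\beta_0}}(\iota_y^*(f))(x^*) + \sum_\chi \sum_{\substack{s_0 \in \CC\\ \mathrm{Re}(s_0)\geq 0}} \frac{w(s_0)}{\kappa_F}\mathrm{Res}_{s=s_0} E(I,\iota_y^*(f)_{\chi_s}).
\end{align*}

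Next I would use the commutative diagram of Proposition \ref{prop:iotay:glob} to replace $\mathcal{F}_{P \cap M_{\beta_0}|P^* \cap M_{\beta_0}}(\iota_y^*(f))$ by $\iota_y^*(\mathcal{F}_{P|P^*}(f))$ both inside the Eisenstein residues and inside the $X^\circ_{P^* \cap M_{\beta_0}}(F)$ sum. Summing over $y \in \Xi$ and appealing to Lemma \ref{lem:cosets} on the $Y_{P^*}(F)$ side then yields the desired identity, provided all four exterior summations converge absolutely.

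The absolute convergence of $\sum_{x \in Y_{P}(F)} f(x)$ and $\sum_{x^* \in Y_{P^*}(F)} \mathcal{F}_{P|P^*}(f)(x^*)$ is Lemma \ref{lem:abs:conv}, so the main obstacle is the convergence of the two iterated residual sums $\sum_{y\in \Xi}\sum_{\chi}\sum_{s_0}$. I expect this to be the genuine difficulty: while for each individual $y$ the inner $(\chi,s_0)$ sum is finite by hypotheses (1)--(3), one must show uniform control of the residues as $y$ varies. I would handle this by using Lemma \ref{lem:lin:func} to write each residue $\mathrm{Res}_{s=s_0} E(I,\iota_y^*(f)_{\chi_s})$ as a finite sum of continuous linear functionals in the archimedean variable evaluated at $\iota_y^*(f)$, bound those functionals by the seminorms defining the Schwartz topology on $\mathcal{S}(X_{P \cap M_{\beta_0}}(F_\infty))$, and then use the Pl\"ucker-type majorization of $f$ provided by lemmas \ref{lem:esti} and \ref{lem:unram:esti} to dominate the resulting $y$-sum by an absolutely convergent sum of Schwartz functions over $V^\circ(F)$, exactly as in the proof of Lemma \ref{lem:abs:conv}. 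Once this majorization is in place one may freely interchange the orders of summation and conclude the theorem.
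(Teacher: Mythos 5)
Your reduction of the theorem to Theorem \ref{thm:PS:BK} --- decomposing both sums via Lemma \ref{lem:cosets}, applying the Braverman--Kazhdan Poisson summation to each $\iota_y^*(f)$, and transporting the Fourier transform through the commutative diagram of Proposition \ref{prop:iotay:glob} --- is exactly the paper's argument, including the observation that $K_M$-finiteness of $f$ passes to $\iota_y^*(f)$ through the $(M\cap M_{\beta_0})^{\mathrm{ab}}$-equivariance of $\iota_y^*$.

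The gap is in your treatment of the absolute convergence of the triple sums over $(y,\chi,s_0)$. You propose to bound each residue $\mathrm{Res}_{s=s_0}E(I,\iota_y^*(f)_{\chi_s})$ using the continuity of the functionals of Lemma \ref{lem:lin:func} together with the pointwise majorizations of lemmas \ref{lem:esti} and \ref{lem:unram:esti}, ``exactly as in the proof of Lemma \ref{lem:abs:conv}.'' But Lemma \ref{lem:abs:conv} controls a sum of pointwise evaluations of $f$, whereas the residue is a global functional of the entire adelic function $\iota_y^*(f)$ --- it involves a theta sum over $X^\circ_{P\cap M_{\beta_0}}(F)$ and a contour integral of its Mellin transform --- and the continuity statement of Lemma \ref{lem:lin:func} is only in the archimedean variable for a \emph{fixed} finite part $f^\infty$; as $y$ varies, the finite part of $\iota_y^*(f)$ varies as well, and nothing in the cited lemmas produces a majorization of the residue by a quantity summable over $y$. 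The paper's mechanism is different and avoids this entirely: for each $y$, the functional equation of Theorem \ref{thm:ES:Langlands} expresses the (finite, with support uniform in $y$) inner sum $\sum_{\chi}\sum_{s_0}\mathrm{Res}_{s=s_0}E(I,\iota_y^*(f)_{\chi_s})$ as $\kappa_F$ times the difference of the two theta sums $\sum_{\gamma}f(\iota_y(\gamma))-\sum_{\gamma}\iota_y^*(\mathcal{F}_{P|P^*}(f))(\gamma)$, whose sums over $y$ converge absolutely by Lemma \ref{lem:abs:conv}; an individual term with fixed $(\chi,s_0)$ is then isolated by modifying $f$ at a finite place using Lemma \ref{lem:Macts} so that only the desired residue survives, which yields the termwise absolute convergence claimed in the statement. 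You should replace your majorization step with this argument (or else supply the missing uniform-in-$y$ bound on the residues, which is not established anywhere in the paper).
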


\begin{rem}
We point out that conjectures \ref{conj:poles:intro} and \ref{conj:poles} are assertions about functions in $\mathcal{S}(X_{P \cap M_{\beta_0}}(\A_F)),$ whereas $f$ lies in $\mathcal{S}(Y_{P,P'}(\A_F)).$  
\end{rem}

\begin{proof}
We use Lemma \ref{lem:cosets} to write
\begin{align} \label{before:PS} \begin{split}
\sum_{x\in Y_{P}(F)}f(x )       &=\sum_{ y }
\sum_{\gamma \in X_{P \cap M_{\beta_0}}^{\circ}(F)}
    f(\iota_y(\gamma))
    =\sum_{ y}
\sum_{\gamma \in X_{P \cap M_{\beta_0}}^{\circ}(F)}
    \iota_y^*(f)(\gamma). 
    \end{split}
\end{align}
Here and throughout the proof all sums over $y$ are over $P'^{\mathrm{der}}(F) \backslash Y(F)$.
By Proposition \ref{prop:iotay:glob} we have $\iota_y^*(f) \in \mathcal{S}(X_{P \cap M_{\beta_0}}(\A_F)).$  
There is a natural map $(M \cap M_{\beta_0})^{\mathrm{ab}} \to M^{\mathrm{ab}}$ and hence $(M \cap M_{\beta_0})^{\mathrm{ab}}(\A_F)$ acts on $\mathcal{S}(Y_{P,P'}(\A_F)).$  With respect to this action $\iota_y^*$ is 
 $(M \cap M_{\beta_0})^{\mathrm{ab}}(\A_F)$-equivariant.  In particular if (2) is valid the assumption that $f \in \mathcal{S}(Y_{P,P'}(\A_F))$ is $K_M$-finite implies $\iota_y^*(f)$ is finite under the maximal compact subgroup of $(M \cap M_{\beta_0})^{\mathrm{ab}}(\A_F).$  Hence applying Theorem \ref{thm:PS:BK}
we see that the above is 
\begin{align*}
&-\sum_{y }\Bigg(\sum_{\chi} \sum_{\substack{s_0 \in \CC\\\mathrm{Re}(s_0)\geq 0}} \frac{w(s_0)}{\kappa_F}\mathrm{Res}_{s=s_0}E^*(I,(\mathcal{F}_{P \cap M_{\beta_0}|P^* \cap M_{\beta_0}}(\iota_y^*(f)))^*_{\chi_{-s}})\Bigg)\\&+\sum_{ y }\Bigg(
\sum_{\gamma \in X_{P^* \cap M_{\beta_0}}^{\circ}(F)}
    \mathcal{F}_{P \cap M_{\beta_0}|P^* \cap M_{\beta_0}}(\iota_y^*(f))(\gamma)    +\sum_{\chi}\sum_{\substack{s_0 \in \CC\\\mathrm{Re}(s_0)\geq 0}} \frac{w(s_0)}{\kappa_F}\mathrm{Res}_{s=s_0}E(I,\iota_y^*(f)_{\chi_s})\Bigg).
\end{align*}
By Proposition \ref{prop:iotay:glob} this is 
\begin{align*}
&-\sum_{y }\Bigg(\sum_{\chi} \sum_{\substack{s_0 \in \CC\\\mathrm{Re}(s_0)\geq 0}} \frac{w(s_0)}{\kappa_F}\mathrm{Res}_{s=s_0}E^*(I,\iota_y^*(\mathcal{F}_{P |P^*}(f))^*_{\chi_{-s}})\Bigg)\\&+\sum_{ y }\Bigg(
\sum_{\gamma \in X_{P^* \cap M_{\beta_0}}^{\circ}(F)}
    \iota_y^*(\mathcal{F}_{P|P^*}(f))(\gamma)    +\sum_{\chi}\sum_{\substack{s_0 \in \CC\\\mathrm{Re}(s_0)\geq 0}} \frac{w(s_0)}{\kappa_F}\mathrm{Res}_{s=s_0}E(I,\iota_y^*(f)_{\chi_s})\Bigg).
\end{align*}
By Lemma \ref{lem:cosets}
\begin{align}
\sum_{ y }
\sum_{\gamma \in X_{P^* \cap M_{\beta_0}}^{\circ}(F)}
    \iota_y^*(\mathcal{F}_{P |P^*}(f))( \gamma )
 \label{after:PS}
=\sum_{x^* \in Y_{P^*}(F)}
    \mathcal{F}_{P |P^*}(f)( x^*).
\end{align}
This completes the proof of the identity in the theorem.

For the absolute convergence statement in the theorem, we observe that \eqref{before:PS} and \eqref{after:PS} are absolutely convergent by Lemma \ref{lem:abs:conv}.  By the argument above and the functional equation in Theorem \ref{thm:ES:Langlands} we see that 
\begin{align} \label{for:AC} \begin{split}
\sum_{\gamma \in X_{P \cap M_{\beta_0}}^{\circ}(F)}&
    f(\iota_y(\gamma))-\sum_{\gamma \in X_{P^* \cap M_{\beta_0}}^{\circ}(F)}\iota_y^*(
    \mathcal{F}_{P|P^* }(f))(\gamma)\\
 &=\sum_{\gamma \in X_{P \cap M_{\beta_0}}^{\circ}(F)}
    f(\iota_y(\gamma))-\sum_{\gamma \in X_{P^* \cap M_{\beta_0}}^{\circ}(F)}
    \mathcal{F}_{P \cap M_{\beta_0}|P^* \cap M_{\beta_0}}(\iota_y^*(f))(\gamma)\\   &=\frac{1}{\kappa_F}\sum_{\chi} \sum_{s_0 \in \CC} \mathrm{Res}_{s=s_0}E(I,\iota_y^*(f)_{\chi_s}) \end{split}
\end{align} 
for all $y.$ 
Arguing as above, the sum over $y$ of the first line of \eqref{for:AC} converges absolutely, and hence the sum over $y$ in
\begin{align*}
   \frac{1}{\kappa_F}\sum_{y} \left(\sum_{\chi} \sum_{s_0 \in \CC} \mathrm{Res}_{s=s_0}E(I,\iota_y^*(f)_{\chi_s})\right)
\end{align*}
is absolutely convergent as well.  

The support of the sum over $\chi$ is finite and under assumptions (1) or (2) it depends only on the $K_M$-type of $f.$  Under assumption (3) it depends only on the $K_M \cap M(\A_F^\infty)$-type of $f.$  With this in mind, for any fixed $s_0 \in \CC$ we can use Lemma \ref{lem:Macts} to choose an $f' \in \mathcal{S}(Y_{P,P'}(\A_F))$ with the same $K_M$-type as $f$ so that 
\begin{align} 
   &\sum_{y } \left(\sum_{\chi} \sum_{s_0 \in \CC} \mathrm{Res}_{s=s_0}E(I,\iota_y^*(f')_{\chi_s})\right)\label{one:term}=\sum_{y}   \mathrm{Res}_{s=s_0}E(I,\iota_y^*(f)_{\chi_s}). 
\end{align}
We deduce that the right hand side of \eqref{one:term} is absolutely convergent, and the same is true if we replace $E(I,\iota_y^*(f)_{\chi_s})$ by $E^*(I,\iota_y^*(\mathcal{F}_{P|P^*}(f))^*_{\chi_s})$  by Proposition \ref{prop:iotay} and Theorem \ref{thm:ES:Langlands}.
\end{proof}

Let $(f_1,f_2) \in \mathcal{S}(Y_{P,P'}(\A_F)) \times \mathcal{S}(Y_{P^*,P'}(\A_F)).$
  Recall the definition of the generalized Schubert Eisenstein series $E_{Y_P}(f_{1\chi_s})$ and $E_{Y_{P^*}}^*(f_{2\chi_s}^*)$ of  \eqref{SEis}.  Using
 lemmas \ref{lem:esti} and \ref{lem:unram:esti} and standard arguments one obtains the following lemma:
\begin{lem} \label{lem:convergence}
For $f_1 \in \mathcal{S}(Y_{P,P'}(\A_F))$ one has 
\begin{align*}
\sum_{x \in M^{\mathrm{ab}}(F) \backslash Y_{P}(F)}\int_{M^{\mathrm{ab}}(\A_F)}  \delta_{P}^{1/2}(m)|\chi_s(\omega_P(m))f_1(m^{-1}x)|dm<\infty
\end{align*}
provided that $\mathrm{Re}(s_{\beta})$ is sufficiently large for all $\beta \in \Delta_P.$  In particular,
$E_{Y_P}(f_{1\chi_s})$ converges absolutely provided that $\mathrm{Re}(s_{\beta})$ is sufficiently large for all $\beta \in \Delta_P.$
 Similarly, for $f_2 \in \mathcal{S}(Y_{P^*,P'}(\A_F))$ the  series $E_{Y_{P^*}}^*(f_{2\chi_s}^*)$ converges absolutely provided that $\mathrm{Re}(s_{\beta_0})$ is sufficiently small and $\mathrm{Re}(s_{\beta})$ is sufficiently large for $\beta \neq \beta_0.$ \qed
\end{lem}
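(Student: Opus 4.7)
I would first reduce to the case that $\chi$ is unitary, by real translation in $s$. It then suffices to bound the absolute value of the double integrand and establish finiteness for $\sigma_\beta := \mathrm{Re}(s_\beta)$ sufficiently large, so that $|\chi_s(\omega_P(m))| = \prod_\beta|\omega_\beta(m)|^{\sigma_\beta}$. The plan is to pointwise dominate $|f_1|$ by a Schwartz function on the Pl\"ucker target, use the Pl\"ucker equivariance to recast the $m$-dependence as scalar multiplication on the affine space $V$, and reduce to a classical adelic Tate-type integral.

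Combining Lemma \ref{lem:esti} at the archimedean places with Lemma \ref{lem:unram:esti} at almost all finite places (exactly as in the proof of Lemma \ref{lem:abs:conv}), for any sufficiently small $\alpha>0$ there is a nonnegative Schwartz function $\Phi\in\mathcal{S}(V_{\beta_0}(\A_F)\times V^\circ_{\Delta_{P'}}(\A_F))$ with
\[
|f_1(y)|\leq \Phi(\mathrm{Pl}_P(y))\prod_{\beta\in\Delta_P}|\mathrm{Pl}_{P_\beta}(y)|_\beta^{\alpha-r_\beta},\qquad y\in Y_P(\A_F).
\]
Setting $y=m^{-1}x$ and writing $\xi_\beta := \mathrm{Pl}_{P_\beta}(x)$, the $M^{\mathrm{ab}}$-equivariance $\mathrm{Pl}_{P_\beta}(m^{-1}x)=\omega_\beta(m)\xi_\beta$ from Lemma \ref{lem:Plucker:stuff} together with the identity $\prod_\beta|\omega_\beta(m)|^{r_\beta}=\delta_P^{1/2}(m)$ from \eqref{ri} show that the absolute value of the Mellin integrand is bounded by
\[
\Phi\bigl((\omega_\beta(m)\xi_\beta)_\beta\bigr)\prod_\beta|\omega_\beta(m)|^{\sigma_\beta+\alpha}\prod_\beta|\xi_\beta|_\beta^{\alpha-r_\beta}.
\]

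I would then use Tonelli to interchange the (nonnegative) sum over $x\in M^{\mathrm{ab}}(F)\backslash Y_P(F)$ with the integral over $m$ and change variable via the measure-preserving isomorphism $\omega_P\colon M^{\mathrm{ab}}(\A_F)\xrightarrow{\sim}(\A_F^\times)^{\Delta_P}$. The product formula $\prod_v|c|_v=1$ for $c\in F^\times$ yields a uniform positive lower bound for $|\xi_\beta|_\beta$ on nonzero rational vectors of $V_\beta$, so $\prod_\beta|\xi_\beta|_\beta^{\alpha-r_\beta}$ is bounded on the summation range. After unfolding the coset sum against a Schwartz majorant of $\Phi$ extended to all of $V(\A_F)$, the whole expression is dominated by a finite multiple of
\[
\int_{(\A_F^\times)^{\Delta_P}}\prod_\beta|u_\beta|^{\sigma_\beta+\alpha}\sum_{\eta\in V^\circ(F)}\Phi(u\eta)\,d^\times u.
\]
This is a classical adelic Tate-type integral whose convergence for $\sigma_\beta$ sufficiently large is standard: for $\prod_\beta|u_\beta|\geq 1$ the inner sum is bounded by a Schwartz constant, while for $\prod_\beta|u_\beta|<1$ adelic Poisson summation on each $V_\beta(\A_F)$ shows the sum grows at worst polynomially in $\prod_\beta|u_\beta|^{-1}$, balanced by the weight $\prod_\beta|u_\beta|^{\sigma_\beta+\alpha}$. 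The analogous bound for $f_2\in\mathcal{S}(Y_{P^*,P'}(\A_F))$ is obtained by the same argument with $P$ replaced by $P^*$; the opposite condition ``$\mathrm{Re}(s_{\beta_0})$ sufficiently small'' appears because $\mathrm{Pl}_{P^*}$ embeds into $V_{\beta_0^{-1}}\times V^\circ_{\Delta_{P'}}$, flipping the sign of the $\beta_0$-weight in the analogous inequality. The main obstacle in the argument is the careful bookkeeping of the Pl\"ucker weights $r_\beta$ and the Poisson blow-up near $u=0$, which is the sole source of the restriction that $\sigma_\beta$ be large; all other steps are standard.
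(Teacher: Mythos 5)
Your argument is correct and is exactly the ``standard argument'' the paper has in mind: the paper offers no written proof beyond citing Lemmas \ref{lem:esti} and \ref{lem:unram:esti}, and your domination of $|f_1|$ by a Schwartz majorant times the Pl\"ucker weights, followed by the reduction to a Tate-type zeta integral via the measure-preserving isomorphism $\omega_P$, mirrors the paper's proof of Lemma \ref{lem:abs:conv}. One small slip: the final dominating integral should be taken over $(F^\times\backslash\A_F^\times)^{\Delta_P}$ rather than $(\A_F^\times)^{\Delta_P}$ (as written the integrand is $(F^\times)^{\Delta_P}$-invariant and the integral over the full idele group diverges), but your subsequent splitting into $\prod_\beta|u_\beta|\ge 1$ and $<1$ is precisely the standard analysis on that quotient, so the substance is unaffected.
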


With notation as in Lemma \ref{lem:Plucker:stuff}, let
\begin{align}
    A^{\beta_0}:=\ker \omega_{\beta_0}:M^{\mathrm{ab}} \lto \GG_m
\end{align}
Thus $M^{\mathrm{ab}}$ is the internal direct sum of $\beta_0^\vee(\GG_m)$ and $A^{\beta_0}$ by Lemma \ref{lem:Plucker:stuff}. 
Recall that $M'$ is the unique Levi subgroup of $P'$ containing $M$.  Applying Lemma \ref{lem:Plucker:stuff} with $P$ replaced by $P'$ we see that the canonical map $M^{\mathrm{ab}} \to M'^{\mathrm{ab}}$ restricts to induce an isomorphism 
\begin{align} \label{Abeta0:isom}
A^{\beta_0} \tilde{\lto} M'^{\mathrm{ab}}.
\end{align}

We have a Haar measure on $\A_F^\times$ and we endow $A^{\beta_0}(\A_F)$ with a Haar measure by declaring that the product measure on $A^{\beta_0}(\A_F)\beta_0^\vee(\A_F^\times)$ is our Haar measure on $M^{\mathrm{ab}}(\A_F).$  
As usual let
\begin{align*}
    (\A_F^\times)^1:=\{t \in \A_F^\times:|t|=1\}, \quad 
    (\A_F^\times)^+:=\{t \in \A_F^\times:|t|>1\}, \quad 
    (\A_F^\times)^-:=\{t \in \A_F^\times:|t|<1\}.
\end{align*}
For algebraic $F$-groups $G$ we set
$$
[G]:=G(F) \backslash G(\A_F)
$$
and define
\begin{align*}
    [\GG_m]^1:=F^\times \backslash (\A_F^\times)^1, \quad [\GG_m]^ \pm:=F^\times \backslash (\A_F^\times)^\pm.
\end{align*}
In the proof of Theorem \ref{thm:PS:BK} we endowed $A_{\GG_m}$ and hence $A_{\GG_m} \backslash \A_F^\times$ with Haar measures.  We endow $(\A_F^\times)^1$ with the pull-back of this measure along the canonical isomorphism $(\A_F^\times)^1 \tilde{\to} A_{\GG_m} \backslash \A_F^\times.$

The following technical lemma will be used in the proof of Theorem \ref{thm:ES:intro}:
\begin{lem} \label{lem:mero}
Let $f \in \mathcal{S}(Y_{P,P'}(\A_F)).$  
Assume 
\begin{enumerate}
\item $F$ is a function field,
    \item $F$ is a number field, Conjecture \ref{conj:poles:intro} is valid and $f$ is $K_M$-finite, or
    \item $F$ is a number field and Conjecture \ref{conj:poles} is valid.
\end{enumerate}
Fix $(s_{\beta}) \in \CC^{\Delta_{P'}}$ such that  $\mathrm{Re}(s_\beta)$ is sufficiently large for all $\beta \neq \beta_0$.
For every $\chi \in (\widehat{A_{\GG_m} F^\times \backslash \A_F^\times})^{\Delta_P},$ $\chi' \in \widehat{A_{\GG_m} F^\times \backslash \A_F^\times}$ and $s_0 \in \CC$ one has that
\begin{align} \label{is:conv}
 &\sum_{y }  \int_{[\GG_m]^1 \times A^{\beta_0}(\A_F)}\delta_{P}^{1/2}(\beta_0^\vee(t)m)\Big|\chi_s(t\omega_P(m))\mathrm{Res}_{z=s_0}E(I,\iota_y^*(L(\beta_0^\vee(t)m)f)_{\chi'_{z}})\Big|dmd^\times t
 \end{align}
is finite.  Moreover for $\mathrm{Re}(s_{\beta_0})$ sufficiently large
\begin{align} \label{+}
    &\sum_{y}\int_{[\GG_m]^- \times A^{\beta_0}(\A_F)}\delta_{P}^{1/2}(\beta_0^\vee(t)m)\Big|\chi_s(t\omega_P(m)) \mathrm{Res}_{z=s_0}E^*(I,\iota_y^*(L(\beta_0^\vee(t)m)f)^*_{\chi'_{ z}})\Big|dm d^\times t
\end{align}
converges.  The expression 
\begin{align} \label{+2}
 &\sum_{y }  \int_{ [\GG_m]^- \times A^{\beta_0}(\A_F)}\delta_{P}^{1/2}(\beta_0^\vee(t)m)\chi_s(t\omega_P(m))\mathrm{Res}_{z=s_0}E(I,\iota_y^*(L(\beta_0^\vee(t)m)f)_{\chi'_{z}})dm d^\times t
\end{align}
originally defined for $\mathrm{Re}(s_{\beta_0})$ large, admits a meromorphic continuation as a function of $s_{\beta_0}.$   Here all sums over $y$ are over $ P'(F) \backslash Y(F)$.
\end{lem}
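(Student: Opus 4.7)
The plan is to apply Lemma \ref{lem:lin:func} (with the $G, P$ there replaced by $M_{\beta_0}, P \cap M_{\beta_0}$) to make the $t$-dependence of the residue explicit, and then reduce the three claims to elementary $t$-integrals together with an $A^{\beta_0}(\A_F)$-integration controlled by Theorem \ref{thm:PS}.

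First I would use the trivial equivariance $L(\beta_0^\vee(t))\iota_y^* = \iota_y^* L(\beta_0^\vee(t))$, which follows from $\iota_y(x) = xy$, to rewrite $\iota_y^*(L(\beta_0^\vee(t)m)f) = L(\beta_0^\vee(t))\iota_y^*(L(m)f)$, noting that $\iota_y^*(L(m)f) \in \mathcal{S}(X_{P\cap M_{\beta_0}}(\A_F))$ by Proposition \ref{prop:iotay:glob}.  Applying Lemma \ref{lem:lin:func} and setting $\Lambda_i^y(m):=\Lambda_i(\iota_y^*(L(m)f))$ gives
\[
\mathrm{Res}_{z=s_0}E(I,\iota_y^*(L(\beta_0^\vee(t)m)f)_{\chi'_z}) = \delta_{P\cap M_{\beta_0}}^{-1/2}(\beta_0^\vee(t))|t|^{-s_0}\overline{\chi'}(t)\sum_{i=0}^{n-1}(\log|t|)^i\Lambda_i^y(m).
\]
Using $\delta_P^{1/2}(\beta_0^\vee(t)) = \delta_{P\cap M_{\beta_0}}^{1/2}(\beta_0^\vee(t))\delta_{P'}^{1/2}(\beta_0^\vee(t))$ (from $N_P = N_{P\cap M_{\beta_0}}N_{P'}$ in Lemma \ref{lem:der}) and the decomposition $M^{\mathrm{ab}} = \beta_0^\vee(\GG_m)A^{\beta_0}$, under which $\omega_{\beta_0}$ vanishes on $A^{\beta_0}$, the absolute value of the integrand in \eqref{is:conv}, \eqref{+}, \eqref{+2} splits into a finite sum over $0 \le i \le n-1$ of
\[
\delta_{P'}^{1/2}(\beta_0^\vee(t))|t|^{\mathrm{Re}(s_{\beta_0})-\mathrm{Re}(s_0)}\big|\log|t|\big|^i \cdot \delta_P^{1/2}(m)\Big|\prod_{\beta\neq\beta_0}\chi_{\beta,s_\beta}(\omega_\beta(m))\Lambda_i^y(m)\Big|.
\]

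For \eqref{is:conv}, on $[\GG_m]^1$ one has $|t|=1$, so $\delta_{P'}^{1/2}(\beta_0^\vee(t))=|t|^{\mathrm{Re}(s_{\beta_0})-\mathrm{Re}(s_0)}=1$ and $\log|t|=0$; only the $i=0$ term survives and the $t$-integration produces a finite constant.  The claim reduces to
\[
\sum_y \int_{A^{\beta_0}(\A_F)} \delta_P^{1/2}(m)\Big|\prod_{\beta\neq\beta_0}\chi_{\beta,s_\beta}(\omega_\beta(m))\Lambda_0^y(m)\Big|dm < \infty,
\]
where for each fixed $m$ one has $\Lambda_0^y(m) = \mathrm{Res}_{z=s_0}E(I,\iota_y^*(L(m)f)_{\chi'_z})$, so $\sum_y|\Lambda_0^y(m)|$ is already finite by Theorem \ref{thm:PS} applied to $L(m)f$.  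To upgrade this to joint convergence, I would produce a Schwartz function on $V_{\beta_0}(\A_F)\times V_{\Delta_{P'}}^\circ(\A_F)$ dominating $\sum_y|\Lambda_0^y(m)|$ appropriately, by combining the continuity of $\Lambda_0$ from Lemma \ref{lem:lin:func} with the pointwise estimates of Lemma \ref{lem:esti} and Lemma \ref{lem:unram:esti} applied to $L(m)f$; the ensuing Mellin integral over $A^{\beta_0}(\A_F) \cong M'^{\mathrm{ab}}(\A_F)$ then converges under our hypotheses on $\mathrm{Re}(s_\beta)$ exactly as in the proof of Lemma \ref{lem:abs:conv}.

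The same reduction handles \eqref{+}: the only new feature is an additional $t$-integration $\int_{[\GG_m]^-} |t|^{\mathrm{Re}(s_{\beta_0})-\mathrm{Re}(s_0)+\langle\rho_{P'},\beta_0^\vee\rangle}|\log|t||^i d^\times t$, which is convergent for $\mathrm{Re}(s_{\beta_0})$ sufficiently large since $|t|<1$ on $[\GG_m]^-$.  For the meromorphic continuation of \eqref{+2}, once \eqref{+} justifies interchanging the $t$-integration with the $(y,m)$ summation/integration, one is left with a finite $\CC$-linear combination of $s_{\beta_0}$-derivatives of partial local zeta factors attached to the character $\chi_{\beta_0}\overline{\chi'}|\cdot|^{s_{\beta_0}-s_0+\langle\rho_{P'},\beta_0^\vee\rangle}$, multiplied by an $s_{\beta_0}$-independent quantity; meromorphy on all of $\CC$ then follows from the standard meromorphy of incomplete zeta integrals.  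The main obstacle, I expect, is the joint dominated convergence for $\sum_y|\Lambda_0^y(m)|$ in the previous paragraph: this requires tracking the archimedean continuity of $\Lambda_0$ together with the uniform-in-$y$ bounds of Lemma \ref{lem:esti} and Lemma \ref{lem:unram:esti}.
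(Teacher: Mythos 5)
Your overall architecture (use Lemma \ref{lem:lin:func} to extract the explicit $t$-dependence $|t|^{-s_0}(\log|t|)^i\overline{\chi'}(t)$ of the residue, then do the elementary $t$-integrals over $[\GG_m]^1$ and $[\GG_m]^-$) matches the second half of the paper's argument, and your treatment of \eqref{+} and \eqref{+2} would be fine \emph{once} the joint absolute convergence over $(y,m)$ of the residue terms is in hand. But that joint convergence is exactly the crux, you have correctly identified it as "the main obstacle," and the mechanism you propose for it does not close the gap. Continuity of $\Lambda_0$ on $\mathcal{S}(X_{P\cap M_{\beta_0}}(F_\infty))$ bounds $|\Lambda_0(g)|$ by finitely many of the defining seminorms of $g$, and those seminorms are suprema over vertical strips of Mellin transforms composed with intertwining operators --- they are not controlled by the pointwise bounds of Lemma \ref{lem:esti} in any way that is visibly summable over $y\in P'^{\mathrm{der}}(F)\backslash Y(F)$ and of Schwartz decay in $m$. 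Worse, in the function field case $\mathcal{S}(Y_{P,P'}(F_\infty))$ carries the discrete topology, so "continuity of $\Lambda_0$" gives no information at all, yet the convergence assertion \eqref{is:conv} is still nontrivial there.

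The missing idea is Lemma \ref{lem:Macts}. The paper proves \eqref{is:conv} by choosing (for each fixed $\chi'$ and $s_0$) a modified finite-adelic component $f'^\infty$ so that the Poisson summation formula applied to $\iota_y^*(L(\beta_0^\vee(t)m)(f_\infty f'^\infty))$ has exactly one surviving residue term, namely the one at $(\chi',s_0)$; the residue is thereby rewritten as a \emph{difference of two theta sums} over $X_{P\cap M_{\beta_0}}^\circ(F)$ and $X_{P^*\cap M_{\beta_0}}^\circ(F)$. Each theta sum, after summing over $y$ via Lemma \ref{lem:cosets}, is dominated termwise by a Schwartz function (lemmas \ref{lem:esti} and \ref{lem:unram:esti}), and the resulting $(t,m)$-integral converges by the argument of Lemma \ref{lem:convergence}; this is the content of the displays \eqref{are:convergent} and \eqref{the:above2} in the paper's proof. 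The convergence of $\sum_y\int_{A^{\beta_0}(\A_F)}|\chi_s(\omega_P(m))\Lambda_{i,y}(L(m)f)|\,dm$ for each individual $i$ is then \emph{deduced} from \eqref{is:conv} by again varying $f^\infty$ via Lemma \ref{lem:Macts} to separate the powers of $\log|t|$ --- it is not established directly. So your proof runs the logic in the opposite (and unworkable) direction: you expand via Lemma \ref{lem:lin:func} first and then try to prove the $\Lambda$-convergence from scratch, whereas the paper proves \eqref{is:conv} first by the difference-of-theta-sums device and only afterwards invokes Lemma \ref{lem:lin:func}.
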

\begin{proof}  The group $[\GG_m]^1$ is compact.  Using this observation and a variant of Lemma \ref{lem:convergence} we see that for all $f_1 \in \mathcal{S}(Y_{P,P'}(\A_F))$ and $f_2 \in \mathcal{S}(Y_{P^*,P'}(\A_F))$ the integrals
\begin{align} \label{are:convergent} \begin{split}
&\int_{[\GG_m]^1 \times [A^{\beta_0}]}\delta_P^{1/2}(\beta_0^\vee(t)m) |\chi_s(t\omega_P(m))|\sum_{x \in Y_{P}(F)}|f_1((\beta_0^\vee(t)m)^{-1}x)|dm d^\times t\\
& \int_{[\GG_m]^1 \times [A^{\beta_0}]}\delta_{P}^{1/2}(\beta_0^\vee(t)m) |\chi_s(t\omega_P(m))|\sum_{x^* \in Y_{P^*}(F)}|f_2((\beta_0^\vee(t)m)^{-1}x^*)|dmd^\times t, \end{split}
\end{align}
are convergent.  Here and throughout the proof we assume that $\mathrm{Re}(s_{\beta})$ is sufficiently large for $\beta \neq \beta_0.$

Let $(f_\infty,f^\infty) \in \mathcal{S}(Y_{P,P'}(F_\infty)) \times \mathcal{S}(Y_{P,P'}(\A_F^\infty))$, let $(t,m) \in \A_F^\times \times A^{\beta_0}(\A_F)$, and let $y \in Y(F).$
Arguing as in the proof of Theorem \ref{thm:PS}  for each $\chi' \in \widehat{A_{\GG_m} F^\times \backslash \A_F^\times}$ and $s_0 \in \CC$
there is an $f'^\infty \in \mathcal{S}(Y_{P,P'}(\A_F^\infty))$ such that
\begin{align}\label{the:above} \begin{split} 
&\sum_{x \in X_{P \cap M_{\beta_0}(F)}}
    \iota_y(f_\infty f'^\infty)((\beta_0^\vee(t)m)^{-1} x)-  \sum_{x^*\in X_{P^* \cap M_{\beta_0}}(F)}
    \mathcal{F}_{P \cap M_{\beta_0} |P^* \cap M_{\beta_0}}\iota_y^*(L(\beta_0^{\vee}(t)m)(f_\infty f'^\infty))(x^*)\\
    &=\mathrm{Res}_{z=s_0}E(I,\iota_y^*(L(\beta_0^\vee(t)m)(f_\infty f^\infty))_{\chi'_{z }}). \end{split}
\end{align}
Multiplying both sides of \eqref{the:above} by $\delta_P^{1/2}(\beta_0^\vee(t)m)\chi_s(t\omega_P(m))$, summing over $y \in P'^{\mathrm{der}}(F) \backslash G(F)$ and then integrating over $[\GG_m]^1 \times [A^{\beta_0}]$ we arrive at 
\begin{align}\label{the:above2} \begin{split} 
\int_{[\GG_m]^1 \times [A^{\beta_0}]}\sum_{y}\delta_P^{1/2}(\beta_0^\vee(t)m)&\chi_s(t\omega_P(m))\Bigg(\sum_{x \in X_{P \cap M_{\beta_0}(F)}}
    \iota_y(f_\infty f'^\infty)((\beta_0^\vee(t)m)^{-1} x)\\&-  \sum_{x^*\in X_{P^* \cap M_{\beta_0}}(F)}
    \mathcal{F}_{P \cap M_{\beta_0} |P^* \cap M_{\beta_0}}\iota_y^*(L(\beta_0^{\vee}(t)m)(f_\infty f'^\infty))(x^*)\Bigg) dm d^\times t\\
    =\int_{[\GG_m]^1 \times [A^{\beta_0}]}\delta_P^{1/2}(\beta_0^\vee(t)m))&\chi_s(t\omega_P(m))\sum_{y} \mathrm{Res}_{z=s_0}E(I,\iota_y^*(L(\beta_0^\vee(t)m)(f_\infty f^\infty))_{\chi'_{ z }})dmd^\times t. \end{split}
\end{align}
To prove that the bottom sum and integral in \eqref{the:above2} converge absolutely it suffices to prove that the top sum and integral converge absolutely.  
Using Lemma \ref{lem:cosets} and Proposition \ref{prop:iotay:glob} this reduces to the assertion that the integrals in \eqref{are:convergent} are convergent.  
Hence
\begin{align}  \label{before:unfold}
\int\sum_{y \in P'^{\mathrm{der}}(F) \backslash Y(F)}\delta_P^{1/2}(\beta_0^\vee(t)m))\Big|\chi_s(t\omega_P(m)) \mathrm{Res}_{z=s_0}E(I,\iota_y^*(L(\beta_0^\vee(t)m)(f_\infty f^\infty))_{\chi'_{z }})\Big|dmd^\times t
\end{align}
converges, where the integral is over 
$[\GG_m]^1 \times [A^{\beta_0}].$  Using \eqref{Abeta0:isom} we see that  \eqref{before:unfold} unfolds to
\eqref{is:conv}.
This completes the proof of the first assertion of the lemma.  

Let $n$ be the maximal order of the pole of $E(I,f'_{\chi'_{z}})$ at $z=s_0$ as $f' \in \mathcal{S}(X_{P \cap M_{\beta_0}}(\A_F))$ varies.
Recall the continuity assertion for $\iota_y^*$ contained in Proposition \ref{prop:iotay}.  Using the first absolute convergence assertion of the lemma and  Lemma \ref{lem:lin:func}  there are continuous linear functionals
\begin{align}
 \Lambda_{i,y}((\cdot)f^\infty):\mathcal{S}(Y_{P,P'}(F_\infty)) \lto \CC
\end{align}
for $0 \leq i \leq n-1$ 
such that  
\begin{align} \label{lambdas} \begin{split}
    &\sum_{y \in P'(F) \backslash Y(F)}\int_{ A^{\beta_0}(\A_F)}\delta_{P}^{1/2}(\beta_0^\vee(t)m)\chi_s(t\omega_P(m)) \mathrm{Res}_{z=s_0}E^*(I,\iota_y^*(L(\beta_0^\vee(t)m)f)^*_{\chi'_{ z}})dm \\
&=  \sum_{y \in P'(F) \backslash Y(F) } \int_{A^{\beta_0}(\A_F)} \left( \sum_{i=0}^{n-1}\chi_s(t\omega_P(m))|t|^{-s_{0}}(\log|t|)^i\overline{\chi}(t) \Lambda_{i,y}(L(m)(f_\infty f^\infty))\right) dm
\end{split}
\end{align}
for all $t \in \A_F^\times.$
Here when $F$ is a function field we give $\mathcal{S}(Y_{P,P'}(F_\infty))$ the discrete topology.  Varying $f^\infty$ and using Lemma \ref{lem:Macts} we deduce that 
\begin{align}
    \sum_{y \in P'(F) \backslash Y(F)}  \int_{A^{\beta_0}(\A_F)}|\chi_s(\omega_P(m)\Lambda_{i,y}(L(m)(f_\infty f^\infty))|dm
\end{align}
is convergent for each $i.$ Using this fact we see that for $\mathrm{Re}(s_{\beta_0})$ sufficiently large the integral over $t \in [\GG_m]^-$ of \eqref{lambdas} is 
\begin{align}
    \sum_{y \in P'(F) \backslash Y(F) } \int_{A^{\beta_0}(\A_F)} \left( \int_{[\GG_m]^-}\sum_{i=0}^{n-1}\chi_s(t\omega_P(m))|t|^{-s_{0}}(\log|t|)^i\overline{\chi}(t)d^\times t \Lambda_{i,y}(L(m)(f_\infty f^\infty))\right) dm;
\end{align}
in other words, it is permissible to bring the integral over $t$ inside the other integral and the sum.  The integrals over $t$ may be evaluated in an elementary manner, yielding the remaining assertions of the lemma.
\end{proof}

\begin{proof}[Proof of Theorem \ref{thm:ES:intro}]  
Assume for the moment that $f$ is $K_M$-finite and that $\mathrm{Re}(s_{\beta})$ is sufficiently large for all $\beta \in \Delta_P$.  Then, using Lemma \ref{lem:convergence} to justify convergence, we have
\begin{align*}
    E_{Y_P}(f_{\chi_s})=&\int_{[M^{\mathrm{ab}}]}\delta_P^{1/2}(m)\chi_s(\omega_P(m))\sum_{x \in Y_{P}(F)}f(m^{-1} x )dm\\
    =&\int_{[\GG_m]^+ \times [A^{\beta_0}]}\delta_P^{1/2}(\beta_0^\vee(t)m)\chi_s(t\omega_P(m))\sum_{x \in Y_{P}(F)}f((\beta_0^\vee(t)m)^{-1}x )dmd^\times t\\
    &+\int_{[\GG_m]^1 \times [A^{\beta_0}]}\delta_P^{1/2}(\beta_0^\vee(t)m)\chi_s(t\omega_P(m))\sum_{x\in Y_{P}(F)}f((\beta_0^\vee(t)m)^{-1}x )dmd^\times t\\
    &+\int_{[\GG_m]^- \times [A^{\beta_0}]}\delta_P^{1/2}(\beta_0^\vee(t)m)\chi_s(t\omega_P(m))\sum_{x \in Y_{P}(F)}f((\beta_0^\vee(t)m)^{-1}x )dmd^\times t.
\end{align*}
Here $d^\times t$ is the measure on $[\GG_m].$
The subgroup $ [\GG_m]^1<[\GG_m]$ has nonzero measure with respect to $d^\times t$ if and only if $F$ is a function field.

By Lemma \ref{lem:equiv} one has 
\begin{align} \label{equi:latter}
\mathcal{F}_{P|P^*} \circ L(m)=\delta_{P^* \cap M'}(m) L(m) \circ \mathcal{F}_{P|P^*}.
\end{align}
Moreover $\delta_{P}^{1/2}(m)\delta_{P^* \cap M'}(m)=\delta_{P^*}^{1/2}(m).$
With this in mind we apply the Poisson summation formula of Theorem \ref{thm:PS} to $\tfrac{1}{2}$ the second integral and the third integral above to see that $E_{Y_P}(f_{\chi_s})$ is the sum of \eqref{1} and \eqref{2} below:
\begin{align}
\label{1}
&\int_{[\GG_m]^+ \times [A^{\beta_0}]}\delta_P^{1/2}(\beta_0^\vee(t)m)\chi_s(t\omega_P(m))\sum_{x \in Y_{P}(F)}f((\beta_0^\vee(t)m)^{-1}x )dm d^\times t\\ \nonumber 
    &+\frac{1}{2}\int_{[\GG_m]^1 \times [A^{\beta_0}]
    }\delta_P^{1/2}(\beta_0^\vee(t)m)\chi_s(t\omega_P(m))\sum_{x \in Y_{P}(F)}f((\beta_0^\vee(t)m)^{-1}x )dmd^\times t\\ \nonumber 
    &+\frac{1}{2}\int_{[\GG_m]^1 \times [A^{\beta_0}]}\delta_{P^*}^{1/2}(\beta_0^\vee(t)m)\chi_s(t\omega_P(m))\sum_{x \in Y_{P^*}(F)}\mathcal{F}_{P|P^*}(f)((\beta_0^\vee(t)m)^{-1} x )dm d^\times t\\ \nonumber
    &+\int_{[\GG_m]^- \times [A^{\beta_0}]}\delta_{P^*}^{1/2}(\beta_0^\vee(t)m)\chi_s(t\omega_P(m))\sum_{x \in Y_{P^*}(F)}\mathcal{F}_{P|P^*}(f)((\beta_0^\vee(t)m)^{-1}x )dm d^\times t
\end{align}
and
\begin{align}
\label{2}
\frac{1}{2}\int_{[\GG_m]^1 \times [A^{\beta_0}]
}\chi_s(t\omega_P(m))&\sum_{\substack{s_0 \in \CC\\ \mathrm{Re}(s_0) \geq  0}}\frac{w(s_0)}{\kappa_F}\Bigg( \sum_{y,\chi'} \delta^{1/2}_P(\beta_0^\vee(t)m) \mathrm{Res}_{z=s_0}E(I,\iota_y^*(L(\beta_0^\vee(t)m)f)_{\chi_z'})\\&-\sum_{y,\chi'}  \delta^{1/2}_{P^*}(\beta_0^\vee(t)m) \mathrm{Res}_{z=s_0}E^*(I,\iota_y^*(L(\beta_0^\vee(t)m)\mathcal{F}_{P|P^*}(f))^*_{\chi'_{-z}})\Bigg)dmd^\times t \nonumber \\
    +\int_{[\GG_m]^- \times [A^{\beta_0}]}\chi_s(t\omega_{P}(m))&\sum_{\substack{s_0 \in \CC\\ \mathrm{Re}(s_0)\geq  0}}\frac{w(s_0)}{\kappa_F}\Bigg(\sum_{y,\chi'}  \delta^{1/2}_P(\beta_0^\vee(t)m) \mathrm{Res}_{z=s_0}E(I,\iota_y^*(L(\beta_0^\vee(t)m)f)_{\chi'_z})\nonumber \\&-\sum_{y,\chi'}  \delta_{P^*}^{1/2}(\beta_0^\vee(t)m) \mathrm{Res}_{z=s_0}E^*(I,\iota_y^*(L(\beta_0^\vee(t)m)\mathcal{F}_{P|P^*}(f))^*_{\chi_{-z}'}) \nonumber \Bigg)dm d^\times t 
\end{align}
where the sums over $y$ are over $P'^{\mathrm{der}}(F) \backslash Y(F)$ and the sums over $\chi'$ are over $\widehat{A_{\GG_m} F^\times \backslash \A_F^\times}.$ 
We recall that the sums over $s_0$ have finite support in a set depending only on the $K_M$-type of $f$ by Conjecture \ref{conj:poles:intro} in the number field case and the fact that $E(I,f_{\chi_s})$ is rational in the sense of \cite[IV.1.5]{MW:Spectral:Decomp:ES} in the function field case \cite[Proposition IV.1.12]{MW:Spectral:Decomp:ES}.  Moreover, using Lemma \ref{lem:equiv} we see that the sums over $\chi'$ have support in a finite set depending only on the $K_M$-type of $f.$

For the remainder of the proof we continue to assume that $\mathrm{Re}(s_{\beta})$ is sufficiently large for $\beta \in \Delta_{P'}.$  Our goal is to understand the behavior of $E_{Y_P}(f_{\chi_s})$ as a function of $s_{\beta_0}.$
Lemma \ref{lem:convergence} implies that the upper two integrals in \eqref{1} converge absolutely for $\mathrm{Re}(s_{\beta_0})$ large enough, and hence they converge for all $s_{\beta_0}.$
%Since $\omega_{P^*}(\beta_0^{\vee}(t))=\omega_{P}(\beta_0^\vee(t))^{-1}$ 
Lemma \ref{lem:convergence} also implies that the lower two integrals in \eqref{1} converge absolutely for $\mathrm{Re}(s_{\beta_0})$ small enough.  Thus they converge for all $s_{\beta_0}$.   We deduce that \eqref{1} is a holomorphic function of $s_{\beta_0}.$

Now consider \eqref{2}.  We have the functional equation
\begin{align} \label{fe}
E(I,\iota_y^*(L(m)f)_{\chi_{s_{\beta_0 }}})=\delta_{P^* \cap M'}(m)E^*(I,\iota_y^*(L(m)\mathcal{F}_{P|P^*}(f))_{\chi_{s_{\beta_0}}})
\end{align}
by Proposition \ref{prop:iotay:glob}, Theorem \ref{thm:ES:Langlands} and \eqref{equi:latter}.  
 Using Lemma \ref{lem:mero} and \eqref{fe} to justify switching sums and integrals, we deduce that for $\mathrm{Re}(s_{\beta_0})$ sufficiently large \eqref{2} is equal to \begin{align}
\label{3}
\frac{1}{2}\sum_{\substack{s_0 \in \CC\\ \mathrm{Re}(s_0)\geq 0}}&\frac{w(s_0)}{\kappa_F}\sum_{y}   \int_{[\GG_m]^1 \times A^{\beta_0}(\A_F)}\chi_s(t\omega_P(m))\Bigg( \delta_P^{1/2}(t\omega_P(m))\mathrm{Res}_{z=s_0}E(I,\iota_y^*(L(\beta_0^\vee(t)m)f)_{\chi_{z_{\beta_0}}})\\
&- \delta_{P^*}^{1/2}(t\omega_P(m))\mathrm{Res}_{s=s_0}E^*(I,\iota_y^*(L(\beta_0^\vee(t)m)\mathcal{F}_{P|P^*}(f))^*_{(\chi_{-z_{\beta_0}})}
\Bigg)dm d^\times t \nonumber \\
    +\sum_{\substack{s_0 \in \CC\\ \mathrm{Re}(s_0) \geq 0}}&\frac{w(s_0)}{\kappa_F}\sum_y 
    \int_{[\GG_m]^- \times A^{\beta_0}(\A_F)}\chi_s(t\omega_{P}(m))\Bigg(\delta_{P}^{1/2}(\beta_0^\vee(t)m)\mathrm{Res}_{z=s_0}E(I,\iota_y^*(L(\beta_0^\vee(t)m)f)_{\chi_{z_{\beta_0}}}) \nonumber \\
    &-    \delta_{P^*}^{1/2}(\beta_0^\vee(t)m)\mathrm{Res}_{z=s_0}E^*(I,\iota_y^*(L(\beta_0^\vee(t)m)\mathcal{F}_{P|P^*}(f))^*_{(\chi_{-z_{\beta_0}}) })\Bigg)dmd^\times t  \nonumber
\end{align}
where the sums on $y$ are now over $P'(F) \backslash G(F).$  Here we are using \eqref{Abeta0:isom} to unfold the integral.
By Lemma \ref{lem:mero} and \eqref{fe} the expression \eqref{3}
is holomorphic for $\mathrm{Re}(s)$ large, and admits a meromorphic continuation to the plane.  Thus under the assumption that $f$ is $K_M$-finite, we have proved the meromorphic continuation statement in the theorem.

By a symmetric argument %that uses Lemma \ref{lem:lin:func} to treat the contribution of the residues 
we deduce that the sum of \eqref{1} and \eqref{2} is 
 $E^*_{Y_{P^*}}(\mathcal{F}_{P|P^*}(f)_{\chi_s}^*).$   This proves the functional equation 
$$
E_{Y_P}(f_{\chi_s})=E^*_{Y_{P^*}} (\mathcal{F}_{P|P^*}(f)_{\chi_s}^*).
$$
Thus far we have assumed that $f$ is $K_M$-finite.  To remove this assumption we note that for any $f \in \mathcal{S}(X_{P}(\A_F))$ and any $\chi \in \widehat{A_{\GG_m} F^\times \backslash \A_F^\times}$ there is a $K_M$-finite $f' \in \mathcal{S}(X_P(\A_F))$ such that $f_{\chi_s}=f_{\chi_s}'$.  It will then satisfy $\mathcal{F}_{P|P^*}(f)_{\chi_s}^*=\mathcal{F}_{P|P^*}(f')_{\chi_s}^*$ by Theorem \ref{thm:FT}. This allows us to remove the $K_M$-finiteness assumption.
\end{proof}

The proof of Theorem \ref{thm:ES:intro} shows that the poles of $E_{Y_P}(f_{\chi_s})$ are controlled by the poles of $E(I,f'_{\chi_{s_{\beta_0 }}})$ for $f' \in \mathcal{S}(X_{P \cap M_{\beta_0}}(\A_F)).$  In particular it is easy to deduce the following corollary from the proof:
\begin{cor} \label{cor:poles}
Under the hypotheses of Theorem \ref{thm:ES:intro}, for fixed $(s_{\beta}) \in \CC^{\Delta_{P'}}$ with $\mathrm{Re}(s_{\beta})$ sufficiently large (for $\beta \neq \beta_0$) the order of the pole of $E_{Y_P}(f_{\chi_s})$ at $s_{\beta_0}=s_0$ is no greater than the maximum of the orders of the pole of
 $E(I,f'_{\chi_{z_{\beta_0}}})$ at $z=s_0$ as $f'$ ranges over  $\mathcal{S}(X_{P \cap M_{\beta_0}}(\A_F)).$  \qed
\end{cor}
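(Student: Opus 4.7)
The plan is to read off the bound directly from the decomposition of $E_{Y_P}(f_{\chi_s})$ constructed in the proof of Theorem \ref{thm:ES:intro}. There, $E_{Y_P}(f_{\chi_s})$ was written as the sum of expression \eqref{1} and expression \eqref{2}, with \eqref{1} already shown to be holomorphic in $s_{\beta_0}$. Hence every pole of $E_{Y_P}(f_{\chi_s})$ in $s_{\beta_0}$ must come from the meromorphic continuation of \eqref{2}, which for $\mathrm{Re}(s_{\beta_0})$ large equals \eqref{3}.

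First I would isolate the contribution of a fixed $s_0$ to \eqref{3}, using the fact (supplied by Conjecture \ref{conj:poles:intro} in the number field case, and by the rationality of $E(I,f_{\chi_s})$ in the function field case) that the sum over $s_0$ has finite support depending only on the $K_M$-type of $f$. Let $n$ be the maximum pole order in the statement of the corollary. By Proposition \ref{prop:iotay:glob}, for each $y$ and each $m \in A^{\beta_0}(\A_F)$ we have $\iota_y^*(L(\beta_0^\vee(t)m)f) \in \mathcal{S}(X_{P\cap M_{\beta_0}}(\A_F))$, so Lemma \ref{lem:lin:func} applies and expresses $\mathrm{Res}_{z=s_0}E(I,\iota_y^*(L(\beta_0^\vee(t)m)f)_{\chi'_z})$ as a finite linear combination, with continuous linear functionals in $m$ as coefficients, of terms of the form $\delta_P^{-1/2}(\beta_0^\vee(t))|t|^{-s_0}(\log|t|)^i\bar\chi'(t)$ with $0\le i\le n-1$.

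Next I would substitute this expansion into \eqref{3} and use Lemma \ref{lem:mero} to justify interchanging the $t$-integration with the sum over $y$ and the $m$-integration over $A^{\beta_0}(\A_F)$. The $t$-integral over $[\GG_m]^-$ then reduces to elementary Mellin-type integrals of shape
\[
\int_{(\A_F^\times)^-/F^\times}|t|^{s_{\beta_0}-s_0}(\log|t|)^i\bar\chi'(t)\chi_{\beta_0}(t)\,d^\times t,
\]
each of which contributes a pole in $s_{\beta_0}$ at $s_{\beta_0}=s_0$ of order at most $i+1\le n$. The integrations over $[\GG_m]^1$ (compact in the function-field case, trivial otherwise) and over $A^{\beta_0}(\A_F)$ introduce no new poles in $s_{\beta_0}$, as their convergence for all $s_{\beta_0}$ is guaranteed by the absolute-convergence assertion of Lemma \ref{lem:mero}. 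A symmetric argument, using the functional equation \eqref{fe} coming from Theorem \ref{thm:ES:Langlands} and Proposition \ref{prop:iotay:glob}, handles the contribution of the $E^*(I,\iota_y^*(L(\beta_0^\vee(t)m)\mathcal{F}_{P|P^*}(f))^*_{(\chi')_{-z}})$ terms and yields the same bound.

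The main technical point to check is that the pole contributions at $s_{\beta_0}=s_0$ from different $(y,\chi',i)$ cannot conspire to produce a pole of order exceeding $n$; but since these contributions are added (not multiplied) and each is individually of order at most $n$, this is immediate. Everything else is bookkeeping on the decomposition already assembled in the proof of Theorem \ref{thm:ES:intro}, so no new analytic input is required.
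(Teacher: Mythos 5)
Your proposal is correct and follows essentially the same route as the paper, which leaves the corollary as an immediate consequence of the decomposition $E_{Y_P}(f_{\chi_s})=\eqref{1}+\eqref{2}$ from the proof of Theorem \ref{thm:ES:intro}: \eqref{1} is entire in $s_{\beta_0}$, and the poles of \eqref{2} (equivalently \eqref{3}) are read off from Lemma \ref{lem:lin:func} and the elementary $t$-integrals exactly as you describe. The only point worth recording is that the order bound $n$ in Lemma \ref{lem:lin:func} is defined via $K_\infty$-finite test functions and then extended by continuity, which matches the supremum over $f'\in\mathcal{S}(X_{P\cap M_{\beta_0}}(\A_F))$ in the statement.
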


\section{On the poles of degenerate Eisenstein series} \label{sec:poles}

We assume for this section that $F$ is a number field.  Our goal here is to verify conjectures \ref{conj:poles:intro} and \ref{conj:poles} in some cases.  Without loss of generality we take $G=M_{\beta_0}$, thus $P \leq G$ is now a maximal parabolic subgroup containing our fixed Borel $B$ and $P^*$ is the opposite parabolic.  Let $Q$ be the unique maximal parabolic subgroup containing $B$ that is conjugate to $P^*.$

Let $K \leq G(\A_F)$ be a maximal compact subgroup and let 
$$
\CC_+:=\{z \in \CC:\mathrm{Re}(z)>0\}.
$$

\begin{lem} \label{lem:enough}
 To prove Conjecture \ref{conj:poles:intro} it suffices to show 
 that for each character $ \chi \in \widehat{A_{\GG_m} F^\times \backslash \A_F^\times}$ there is a finite set $\Upsilon(\chi) \subset \CC_+$ such that if 
 $
E(g,\Phi^{\chi_s}_P)
$ or $E(g,\Phi^{\chi_s}_Q)$
has a pole at $s=s_0$ with $\mathrm{Re}(s_0)>0$ for any holomorphic $K$-finite section $\Phi^{\chi_s}_P \in I_P(\chi_s)$ or $\Phi^{\chi_s}_Q \in I_Q(\chi_s)$  then $s \in \Upsilon(\chi).$
\end{lem}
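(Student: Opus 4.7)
The plan is to split the complex $s$-plane into three regions---$\mathrm{Re}(s)>0$, $\mathrm{Re}(s)=0$, and $\mathrm{Re}(s)<0$---and to show that the poles of $E(m,f_{\chi_s})$ in each region are forced into a finite set determined by $\chi$.  Throughout I fix a $K$-finite $f\in\mathcal{S}(X_P(\A_F))$.

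On $\mathrm{Re}(s)>0$: by the definition of the Schwartz space, $f_{\chi_s}$ is a good section of $I_P(\chi_s)$, so $f_{\chi_s}/a_{P|P}(\chi_s)$ is a holomorphic $K$-finite section.  Lemma \ref{lem:holo2} shows that $a_{P|P}(\chi_s)$ is itself holomorphic and nowhere vanishing on $\{\mathrm{Re}(s)>0\}$, so $f_{\chi_s}$ itself is a holomorphic $K$-finite section there.  The hypothesis, applied with $\Phi_P^{\chi_s}:=f_{\chi_s}$, puts every pole of $E(m,f_{\chi_s})$ with $\mathrm{Re}(s_0)>0$ into $\Upsilon(\chi)$.

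On $\mathrm{Re}(s)=0$: the final assertion of Theorem \ref{thm:ES:Langlands} bounds the order of a pole of $E(m,f_{\chi_s})$ at $z$ by the order of the pole of $a_{P|P}(\chi_s)$ at $z$.  By Lemma \ref{lem:holo2}, $a_{P|P}(\chi_s)$ is a finite product of completed Hecke $L$-functions, so its poles on the unitary axis form a finite set depending only on $\chi$; I would absorb these into $\Upsilon(\chi)$.

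On $\mathrm{Re}(s)<0$: using the functional equation of Theorem \ref{thm:ES:Langlands} together with the identity $E^*(m,h)=E_Q(m,j(h))$ from its proof, one rewrites
\begin{equation*}
E(m,f_{\chi_s})=E_Q\bigl(m,j(\mathcal{F}_{P|P^*}(f)_{\chi_s}^*)\bigr).
\end{equation*}
Since $\mathcal{F}_{P|P^*}(f)\in\mathcal{S}(X_{P^*}(\A_F))$, the symmetric analogue of the first step---using the relation $a_{P^*|P^*}(\chi_s)=a_{P|P}(\chi_s)$ together with Lemma \ref{lem:holo2}---shows that $\mathcal{F}_{P|P^*}(f)_{\chi_s}^*$ is holomorphic and $K$-finite in the half-plane where the $P^*$-side Mellin transform is well-behaved.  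The operator $j$, being conjugation by the Weyl element $w$, preserves $K$-finiteness and holomorphy and converts $I_{P^*}^*(\chi_s)$ into $I_Q(\chi'_{s'})$, where the quasi-character $\chi_s\circ\omega_P\circ\mathrm{Ad}(w^{-1})$ is rewritten in the standard form $\chi'_{s'}\circ\omega_Q$ with $s'$ an affine-linear function of $s$ whose leading coefficient is negative.  Hence $\mathrm{Re}(s_0)<0$ corresponds to $\mathrm{Re}(s'_0)>0$, and the hypothesis applied to $E_Q$ confines the corresponding $s'_0$ to a finite set, which pulls back to a finite set of $s_0$ to be added to $\Upsilon(\chi)$.

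The main obstacle is the Weyl bookkeeping in the third step: one must show that $\chi_s\circ\omega_P\circ\mathrm{Ad}(w^{-1})$ admits the explicit form $\chi'_{s'}\circ\omega_Q$ with $s'$ an affine-linear function of $s$ of negative leading coefficient, and verify that $j$ together with the $P^*$-side Schwartz relation preserves the holomorphy and $K$-finiteness required to invoke the hypothesis on $E_Q$.  Once this is carried out, the three finite sets obtained in the three regions assemble into the $\Upsilon(\chi)\subset\CC$ demanded by Conjecture \ref{conj:poles:intro}.
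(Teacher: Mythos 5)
Your proposal is correct and takes essentially the same route as the paper's proof: the three-region split, the use of Lemma \ref{lem:holo2} together with the last assertion of Theorem \ref{thm:ES:Langlands} on the unitary axis, and the transfer of the half-plane $\mathrm{Re}(s)<0$ to the $Q$-side via the functional equation and the conjugation map $j$. The "Weyl bookkeeping" you flag as an obstacle is exactly what the paper asserts in the form that $j(\mathcal{F}_{P|P^*}(f)^*_{\chi_s})$ is a holomorphic section of $I_Q((\chi^{-1})_{-s})$ for $\mathrm{Re}(s)<0$, which is routine since $\omega_{P^*}=\omega_P^{-1}$.
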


 \begin{proof}
 By the observations in the proof of Lemma \ref{lem:holo} if $a_{P|P}(\chi_s)$ has a pole at $s=s_0$ for $\mathrm{Re}(s_0) \geq 0$ then $s_0=0.$  Moreover, the order of the pole is bounded by an integer depending only on $P$ and $G.$  Thus for any $f \in \mathcal{S}(X_P(\A_F))$ Theorem \ref{thm:ES:Langlands} implies that the only possible pole of $E(g,f_{\chi_s})$ on the line $\mathrm{Re}(s)=0$ is at $s=0$ and its order is bounded in a sense depending only on $P$ and $G.$
 
 Now consider poles of $E(g,f_{\chi_s})$ for $\mathrm{Re}(s)<0.$  Using notation from the proof of Theorem \ref{thm:ES:Langlands} and arguing as in that proof we have
 $$
 E(g,f_{\chi_s})=E^*(g,\mathcal{F}_{P|P^*}(f)^*_{\chi_s})=E_Q(g,j(\mathcal{F}_{P|P^*}(f)^*_{\chi_s})).
 $$
Thus the order of the pole of $E(g,f_{\chi_s})$ at $s_0$ is equal to the order of the pole of $E_Q(g,j(\mathcal{F}_{P|P^*}(f)^*_{\chi_{s}}))$
at $s_0.$
 Since $\mathcal{F}_{P|P^*}(f) \in \mathcal{S}(X_{P^*}(\A_F))$  and $a_{P^*|P^*}((\chi_s)^{-1})$ is holomorphic for $\mathrm{Re}(s)<0$ by Lemma \ref{lem:holo2} we have that $\mathcal{F}_{P|P^*}(f)^*_{\chi_s} \in I_{P^*}^*(\chi_s)$ is a holomorphic section for $\mathrm{Re}(s)<0$. This implies  $j(\mathcal{F}_{P|P}(f)^*_{\chi_{s}})$ is a holomorphic section of $I_Q((\chi^{-1})_{-s})$ for $\mathrm{Re}(s)<0.$  The lemma follows.
\end{proof}
 
The proof of the following lemma is analogous:
 
\begin{lem} \label{lem:enough2}
 To prove Conjecture \ref{conj:poles} it suffices to show 
 that there is a finite set $\Upsilon \subset \CC_+$ and an integer $n>0$ such that if 
 $
E(g,\Phi^{\chi_s}_P)
$ or $E(g,\Phi^{\chi_s}_Q)$
has a pole at $s=s_0$ with $\mathrm{Re}(s_0)>0$ for any $\chi \in \widehat{A_{\GG_m} F^\times \backslash \A_F^\times}$ and holomorphic $K$-finite sections $\Phi^{\chi_s}_P \in I_P(\chi_s)$ or $\Phi^{\chi_s}_Q \in I_Q(\chi_s)$ then  $s_0 \in \Upsilon$ and $\chi^n=1.$ \qed
\end{lem}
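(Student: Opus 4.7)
The plan is to uniformize the proof of Lemma~\ref{lem:enough} in $\chi$. Let $n_0$ be the integer furnished by Lemma~\ref{lem:holo2}, chosen so that $a_{P|P}(\chi_s)$ is entire whenever $\chi^{n_0}\neq 1$; by the symmetry $a_{P^*|P^*}(\chi_s)=a_{P|P}(\chi_s)$ the same $n_0$ controls the poles of $a_{P^*|P^*}$. Taking the hypothesis to hold with some $\Upsilon_0\subset\CC_+$ and $n_1\in\ZZ_{>0}$, I propose $n:=\mathrm{lcm}(n_0,n_1)$ and $\Upsilon:=\Upsilon_0\cup\{0\}\cup\{-s_0:s_0\in\Upsilon_0\}$ as the finite data required by Conjecture~\ref{conj:poles}.

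Given a $K$-finite $f\in\mathcal{S}(X_P(\A_F))$ and any $\chi\in\widehat{A_{\GG_m}F^\times\backslash\A_F^\times}$, I would factor
\[
f_{\chi_s}=a_{P|P}(\chi_s)\,\Phi^{\chi_s}_P
\]
where $\Phi^{\chi_s}_P$ is a holomorphic $K$-finite section of $I_P(\chi_s)$, as built into the definition of the Schwartz space and recalled in the proof of Theorem~\ref{thm:ES:Langlands}. The poles of $E(g,f_{\chi_s})$ then come from the two factors $a_{P|P}(\chi_s)$ and $E(g,\Phi^{\chi_s}_P)$. I would then split into three vertical strips. On $\mathrm{Re}(s_0)>0$, Lemma~\ref{lem:holo2} forces the pole to lie in the Eisenstein factor, and the hypothesis applied to $P$ yields $s_0\in\Upsilon_0\subseteq\Upsilon$ and $\chi^{n_1}=1$. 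On $\mathrm{Re}(s_0)=0$, the standard holomorphy of $E(g,\Phi^{\chi_s}_P)$ on the unitary axis for holomorphic $K$-finite sections (cited in the proof of Theorem~\ref{thm:ES:Langlands}) confines the pole to the $a_{P|P}$-factor; since that factor is a product of completed Hecke $L$-functions, its only possible pole with $\mathrm{Re}(s)\geq 0$ is at $s=0$, and its appearance requires $\chi^{n_0}=1$.

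On $\mathrm{Re}(s_0)<0$, I would apply the functional equation $E(g,f_{\chi_s})=E_Q(g,j(\mathcal{F}_{P|P^*}(f)^*_{\chi_s}))$ already used in Lemma~\ref{lem:enough}, factor $\mathcal{F}_{P|P^*}(f)^*_{\chi_s}=a_{P^*|P^*}((\chi_s)^{-1})\,\Psi^{\chi_s}$ with $\Psi^{\chi_s}$ holomorphic and $K$-finite, and observe that $\mathrm{Re}(-s)>0$ makes the $a$-factor holomorphic by Lemma~\ref{lem:holo2}. Consequently $j(\Psi^{\chi_s})$ is a holomorphic $K$-finite section of $I_Q((\chi^{-1})_{-s})$ acquiring a pole at the point $-s_0$ with $\mathrm{Re}(-s_0)>0$, so the hypothesis applied to $Q$ gives $-s_0\in\Upsilon_0$ and $(\chi^{-1})^{n_1}=1$, equivalently $s_0\in -\Upsilon_0\subseteq\Upsilon$ and $\chi^n=1$. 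No real obstacle arises; the only point of care is the tracking of $(\chi^{-1},-s)$ through the isomorphism $j$, which was already handled in the proof of Lemma~\ref{lem:enough}, together with the bookkeeping amalgamation of $n_0$ and $n_1$ into a single exponent $n$.
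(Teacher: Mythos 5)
Your proposal is correct and follows essentially the same route as the paper, which simply declares the proof ``analogous'' to that of Lemma \ref{lem:enough}: factor $f_{\chi_s}$ as $a_{P|P}(\chi_s)$ times a holomorphic $K$-finite section, handle $\mathrm{Re}(s_0)>0$ by the hypothesis for $P$, the unitary axis by holomorphy of Eisenstein series there together with Lemma \ref{lem:holo2}, and $\mathrm{Re}(s_0)<0$ by the functional equation and the hypothesis for $Q$. Your explicit bookkeeping ($n=\mathrm{lcm}(n_0,n_1)$, $\Upsilon=\Upsilon_0\cup\{0\}\cup(-\Upsilon_0)$) is exactly the uniformization in $\chi$ that upgrades the argument from Lemma \ref{lem:enough} to Lemma \ref{lem:enough2}.
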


\begin{thm}
Suppose that $G=\mathrm{Sp}_{2n}$ and that $ P$ is the Siegel parabolic, that is, the parabolic subgroup with Levi subgroup isomorphic to $\GL_n.$  Then Conjecture \ref{conj:poles} is true.
\end{thm}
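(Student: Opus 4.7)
By Lemma \ref{lem:enough2}, the task reduces to producing a finite $\Upsilon \subset \CC_+$ and an integer $n_0>0$ such that $E(g,\Phi^{\chi_s}_P)$ and $E(g,\Phi^{\chi_s}_Q)$ can have poles in $\CC_+$ only at points of $\Upsilon$, and only for characters $\chi$ with $\chi^{n_0}=1$, as $\Phi_P^{\chi_s}$ and $\Phi_Q^{\chi_s}$ range over holomorphic $K$-finite sections. Since the Siegel parabolic of $\mathrm{Sp}_{2n}$ is conjugate to its opposite under the long Weyl element, the parabolic $Q$ is again a Siegel parabolic and is conjugate to $P$, so it suffices to treat the classical Siegel Eisenstein series $E(g,\Phi_P^{\chi_s})$ attached to $\mathrm{Ind}_P^{\mathrm{Sp}_{2n}}(\chi\circ\det\cdot|\det|^s)$.

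The plan is then to invoke the pole analysis of Siegel Eisenstein series due to Kudla and Rallis. The space of $K$-finite holomorphic sections of $I_P(\chi_s)$ is generated over the ring of entire functions of $s$ by standard $K$-finite sections, so after multiplying by an entire function it suffices to verify the claim for standard sections. For such a section, the Kudla--Rallis analysis (and its extensions, e.g.\ by Ikeda and Yamana) shows that the poles of $E(g,\Phi_P^{\chi_s})$ in $\mathrm{Re}(s)>0$ are controlled by the poles of completed Hecke $L$-factors of the form $L(2s-j,\chi^2)$ for $0\leq j \leq n-1$, appearing via the Gindikin--Karpelevich formula for the intertwining operator $\mathcal{R}_{P|P^*}$ at the Siegel parabolic (compare with the basic function computation in Proposition \ref{prop:basic:fixed}). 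Each such factor has poles in $\mathrm{Re}(s)>0$ only at a finite set of half-integer values and only when $\chi^2=1$. Taking $n_0:=2$ and letting $\Upsilon$ be the resulting finite set of half-integers then yields the required bounds.

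The main obstacle is extracting the uniformity in $\chi$ cleanly from the Kudla--Rallis argument, which is usually stated with $\chi$ fixed. However, inspection of the proof shows that the only mechanism by which a pole in $\mathrm{Re}(s)>0$ appears is through one of the Hecke $L$-factors above, each of which is holomorphic on the right half-plane unless $\chi^2=1$; no additional ``accidental'' poles can occur because the unnormalized intertwining operator $\mathcal{R}_{P|P^*}$ acts on a standard $K$-finite section as a meromorphic multiple of the normalizing factor, with the residual automorphic part being holomorphic. Hence $n_0=2$ works uniformly in $\chi$, and the set $\Upsilon$ is the finite set of half-integers dictated by the explicit $L$-factors, completing the proof once the Kudla--Rallis analysis is translated into the conventions for the Schwartz space $\mathcal{S}(X_P(\A_F))$ used here.
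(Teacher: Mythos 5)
Your proposal follows essentially the same route as the paper: reduce via Lemma \ref{lem:enough2} to controlling poles of the Siegel Eisenstein series for holomorphic $K$-finite sections in $\mathrm{Re}(s)>0$, then invoke the known Kudla--Rallis/Ikeda pole analysis (the paper cites the Corollary of \cite[Proposition 1.6]{Ikeda:poles:triple}, together with the holomorphy and nonvanishing of $a_{P|P}(\chi_s)$ for $\mathrm{Re}(s)>0$ from Lemma \ref{lem:holo2} to pass between normalized and unnormalized series). The argument is correct and matches the paper's proof in substance.
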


\begin{proof}
We use the results of \cite{Ikeda:poles:triple}.  We point out that
$a_{P|P}(\chi_s)=a_{I_{2n}}(\chi_s)$ in the notation of loc.~cit.~by \cite[\S 4.3]{Getz:Hsu:Leslie}.  
Thus the theorem follows from Lemma \ref{lem:enough2}, the fact that $a_{P|P}(\chi_s)$ is holomorphic and nonzero for $\mathrm{Re}(s) > 0$ by Lemma \ref{lem:holo2}, and the Corollary of 
\cite[Proposition 1.6]{Ikeda:poles:triple}.
\end{proof}

\begin{thm} \label{thm:GLn} Suppose that $G=\SL_n$.  Then Conjecture \ref{conj:poles:intro} is true. 
\end{thm}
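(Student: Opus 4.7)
The plan is to reduce to Lemma \ref{lem:enough} and then invoke known results on degenerate Eisenstein series for $\mathrm{GL}_n$.

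First I would apply Lemma \ref{lem:enough}: it suffices to show that for each $\chi \in \widehat{A_{\GG_m} F^\times \backslash \A_F^\times}$ the set of $s_0 \in \CC_+$ such that either $E(g, \Phi_P^{\chi_s})$ or $E(g, \Phi_Q^{\chi_s})$ has a pole at $s = s_0$ for some holomorphic $K$-finite section is finite. Since $P^*$ is again a maximal parabolic of $\mathrm{SL}_n$ with Levi of the same block-diagonal shape as $P$, and $Q$ is conjugate to $P^*$, it is enough to treat the case of $P$.

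Next I would reduce from $\mathrm{SL}_n$ to $\mathrm{GL}_n$. The maximal parabolic $P \leq \mathrm{SL}_n$ containing $B$ corresponds to a maximal parabolic of $\mathrm{GL}_n$ with Levi $\mathrm{GL}_a \times \mathrm{GL}_b$ for some $a + b = n$, and the character $\chi_s \circ \omega_P$ extends to a character of this Levi. The Eisenstein series $E(g, \Phi_P^{\chi_s})$ on $\mathrm{SL}_n$ is then a restriction of a degenerate Eisenstein series on $\mathrm{GL}_n$ induced from a character of a maximal parabolic, so its poles are contained in the pole set of the $\mathrm{GL}_n$ series.

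Finally, I would invoke the analysis of Hanzer and Mui\'c \cite{Hanzer:Muic} of degenerate principal series on $\mathrm{GL}_n$ induced from a character of a maximal parabolic (which rests on the Mœglin--Waldspurger classification of the discrete spectrum of $\mathrm{GL}_n$). Their results pin down the poles of such Eisenstein series for fixed $\chi$ to a finite list of shifts, determined by the Hecke $L$-function $L(s, \chi)$ and the integers $a, b$; this list will serve as $\Upsilon(\chi)$.

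The main obstacle is bookkeeping: reconciling our normalizations (the $\delta_P^{1/2}$ factors, the identification $\omega_P : M^{\mathrm{ab}} \tilde{\to} \GG_m^{\Delta_P}$, and the induction from $P$ versus from its unipotent radical's Levi complement) with the normalizations in the Hanzer--Mui\'c framework, so that one can directly read off the finite set of pole locations from their theorems. This translation is the substantive analytical input being imported, as reflected in the authors' acknowledgment of M.~Hanzer's explanation of how to derive Theorem \ref{thm:GLn} from loc.~cit.
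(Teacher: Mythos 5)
Your proposal is correct and follows essentially the same route as the paper: reduce via Lemma \ref{lem:enough} to bounding the poles of $E(g,\Phi_P^{\chi_s})$ and $E(g,\Phi_Q^{\chi_s})$ for holomorphic $K$-finite sections in the half-plane $\mathrm{Re}(s)>0$, then import the Hanzer--Mui\'c analysis of degenerate Eisenstein series on $\GL_n$ (the paper phrases this as repeating the two key steps of \cite[\S 6]{Hanzer:Muic}: finiteness of poles of a quotient of completed $L$-functions and of a normalized intertwining operator, with the $\GL_2$ input from \cite{Jacquet:Langlands}). The normalization bookkeeping you flag is exactly the translation the authors credit to Hanzer in the acknowledgments.
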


\begin{proof}
This can be derived using Lemma \ref{lem:enough} and the same arguments as those proving \cite[Theorem 1.4]{Hanzer:Muic}.  The proof comes down to two statements explained in \cite[\S 6]{Hanzer:Muic}.  The first is the statement that a certain quotient of completed $L$-functions depending on $\chi$ has only finitely many poles for $\mathrm{Re}(s)>0$.   The second is that a certain normalized intertwining operator has only finitely many poles.  These are proven exactly as in loc.~cit.~The required facts on induced representations of $\GL_2$ may be found in \cite{Jacquet:Langlands}.
\end{proof}

Much of the work towards proving Conjecture \ref{conj:poles} for arbitrary parabolic subgroups of  symplectic groups is contained in \cite{Hanzer:NS}, and much of the work towards proving Conjecture \ref{conj:poles} for general linear groups is contained in \cite{Hanzer:Muic}. The additional steps necessary seem to require careful analysis of the reducibility points of local principal series representations at the archimedean places.  More generally, the nonarchimedean work in  \cite{Hsu} reduces Conjecture \ref{conj:poles} to a related conjecture involving only the archimedean theory  for any simple group not of type $E$ or $F.$

\appendix

\section{Possible generalizations} \label{App}

In this paper we have focused on series related to degenerate Eisenstein series as opposed to cuspidal Eisenstein series.  One of our reasons for this decision is that the the Poisson summation conjecture is not yet known for the spherical varieties attached to cuspidal Eisenstein series.  

This leads one to two interconnected questions:  \begin{enumerate}
\item Can one prove analogous results for series related to general Eisenstein series (e.g.~cuspidal Eisenstein series)?
\item Can one work directly with Eisenstein series without explicit reference to the Poisson summation conjecture?
\end{enumerate}
In this appendix we explain our expected answers  to these questions.  

We maintain the notation adopted earlier in the paper.  In particular $P < P' \leq G$ are a pair of parabolic subgroups with $P$ maximal in $P',$ $Y \subset G$ is a subscheme stable under left multiplication by $P',$ etc.
Let $\sigma$ be an irreducible $A_M \backslash M(\A_F)$-subrepresentation of $L^2(A_MM(F) \backslash M(\A_F)).$  We can then form the global induced representations 
$$
I_P(\sigma):=\mathrm{Ind}_P^G(\sigma) \quad \textrm{ and }I_{P^*}^*(\sigma):=\mathrm{Ind}_{P^*}^G(\sigma)
$$
of $G(\A_F)$ and the subspaces \begin{align*}
I_P(\sigma)^0:=\mathrm{Ind}_P^G(\sigma)^0<I_{P}(\sigma),\\
I_{P^*}^*(\sigma)^0:=\mathrm{Ind}_{P^*}^G(\sigma)^0<I_{P^*}^*(\sigma)
\end{align*}
in the usual manner \cite[\S 10.1-10.3]{GetzHahn:Book}.  For $\varphi \in I_P(\sigma)^0$ and $\varphi^* \in I_{P^*}^*(\sigma)^0$ we form the Eisenstein series
\begin{align}
E(x,\varphi,s):&=\sum_{\gamma \in P(F) \backslash G(F)}\varphi(\gamma x)e^{\langle H_P(\gamma x),s+\rho_P\rangle}\\
E(x,\varphi^*,s):&=\sum_{\gamma \in P^*(F) \backslash G(F)}\varphi^*(\gamma x)e^{\langle H_{P}(\gamma x),s\rangle}e^{\langle H_{P^*}(\gamma x),\rho_{P^*}\rangle}.
\end{align}
Here we are identifying $\CC^{k+1}$ with the vector space $\mathfrak{a}_{M\CC}^*$ of loc.~cit. 

In this context generalized Schubert Eisenstein series are the sums
\begin{align}
    E_{Y_P}(x,\varphi,s):&=\sum_{y \in M^{\mathrm{ab}}(F) \backslash Y_P(F)} \varphi(yx)e^{\langle H_P(\gamma x),s+\rho_P\rangle}\\
    E_{Y_{P^*}}^*(x,\varphi^*,s):&=\sum_{y \in M^{\mathrm{ab}}(F) \backslash Y_{P^*}(F)} \varphi^*(yx)e^{\langle H_{P}(\gamma x),s\rangle}e^{\langle H_{P^*}(\gamma x),\rho_{P^*}\rangle}
\end{align}
Assume $\mathrm{Re}(s_i)$  is sufficiently large for $1 \leq i \leq k.$  If $\mathrm{Re}(s_0)$ is sufficiently large (resp.~small) then $E_{Y_P}(x,\varphi,s)$ (resp.~$E^*_{Y_{P^*}}(x,\varphi^*,s)$) converges absolutely by comparison with $E(x,\varphi,s),$
(resp.~$E^*(x,\varphi^*,s)$) \cite[Proposition 10.3.1]{GetzHahn:Book}.   We expect that $E_{Y_P}(x,\varphi,s)$ and $E_{Y_{P^*}}(x,\varphi^*,s)$ admit meromorphic continuations in $s_0$ and a functional equation relating one to the other.

In order to prove this one could probably roughly argue as we have in this paper.  One would want to break $E_{Y_P}(x,\varphi,s)$ into an infinite (but convergent) sum of Eisenstein series on $M_{\beta_0}$ as before.  One would then try to prove that the well-known properties of Eisenstein series under intertwining operators are inherited by these sums.  In other words, one would want to study intertwining operators in families.  The key difficulty to overcome to complete the argument is 
to understand this family of intertwining operators.  This may require normalization.
The argument would also be complicated by the necessity of understanding infinite sums of Eisenstein series outside their ranges of absolute convergence.

To relate this to the philosophy of the current paper we observe that  the  Fourier transform is essentially a lift to the Schwartz space of a normalized intertwining operator.  Moreover it is easier (at least for the authors) to understand Poisson summation formulae in families as opposed to infinite linear combinations of meromorphically continued functions. In the former case one is always working with absolutely convergent sums, and not functions only defined via meromorphic continuation.
This gives a conceptual advantage of working with 
Schwartz spaces.

%----------------------------------------------------------------

\bibliography{refs}
\bibliographystyle{alpha}

\end{document}